\newcommand\res{\hbox{ {\vrule height .22cm}{\leaders\hrule\hskip.2cm} } }
\def\R{{\mathbb{R}}}
\def\H{{\mathscr{H}}}
\def\sm{\setminus}
\def\wt{\widetilde}
\def\ms{\medskip}
\def\1{{\mathds 1}}
\def\bB{{\mathbb{B}}}
\def\bZ{{\mathbb{Z}}}
\def\cD{{\mathscr{D}}}
\def\cF{{\mathscr{F}}}
\def\cG{{\mathscr{G}}}
\def\cH{{\mathscr{H}}}
\def\cR{{\mathscr{R}}}
\def\cS{{\mathscr{S}}}
\def\cV{{\mathscr{V}}}
\def\N{{\mathbb{N}}}
\def\W{{\mathbb {W}}}
\def\Tan{\text{Tan}}
\def\Lip{\text{Lip}}
\def\dist{\mathop\mathrm{dist}}
\def\supp{\mathop\mathrm{supp}}
\newcommand{\av}[1]{\left| #1 \right|}
\def\barintgerm_#1{\mathchoice
{\mathop{\vrule width 6pt height 3 pt depth -2.5pt
\kern -8.8pt \intop}\nolimits_{#1}}%
{\mathop{\vrule width 5pt height 3 pt depth -2.6pt
\kern -6.5pt \intop}\nolimits_{#1}}%
{\mathop{\vrule width 5pt height 3 pt depth -2.6pt
\kern -6pt \intop}\nolimits_{#1}}%
{\mathop{\vrule width 5pt height 3 pt depth -2.6pt \kern -6pt
\intop}\nolimits_{#1}}}
\theoremstyle{plain}
\newtheorem{theorem}{Theorem}
\newtheorem{lemma}[theorem]{Lemma}
\theoremstyle{definition}
\newtheorem{definition}[theorem]{Definition}
\numberwithin{equation}{section}
\numberwithin{theorem}{section}
\begin{document}

\title{Wasserstein Distance and the Rectifiability of Doubling Measures: Part II}
\author{Jonas Azzam
Guy David
Tatiana Toro\footnote{The first author was partially supported by NSF RTG grant 0838212.
The second author acknowledges the generous support of
the Institut Universitaire de France, and of the ANR
(programme blanc GEOMETRYA, ANR-12-BS01-0014).
The third author was partially supported by an NSF grants DMS-0856687 and DMS-1361823, a grant from the  
Simons Foundation (\# 228118) and the Robert R. \& Elaine F. Phelps Professorship in 
Mathematics.}}
\maketitle

\abstract 
We study the structure of the support of a doubling measure by analyzing its
self-similarity properties, which we estimate using a variant of the $L^1$ Wasserstein distance.
We show that a measure satisfying certain self-similarity conditions admits a unique (up to 
multiplication by a constant) flat tangent measure at almost every point. This allows 
us to decompose the support into rectifiable pieces of various dimensions.
 
 \ms  
Soit $\mu$ une mesure doublante dans $\R^n$. On introduit deux parties du support 
o\`u $\mu$ a certaines propri\'et\'es d'autosimilarit\'e, que l'on mesure
\`a l'aide d'une variante de la $L^1$-distance de Wasserstein, et on montre qu'en 
chaque point de ces deux parties, toutes les mesures tangentes \`a $\mu$ sont des
multiples d'une mesure plate (la mesure de Lebesgue sur un sous-espace vectoriel).
On utilise ceci pour donner une d\'ecomposition de ces deux parties en ensembles 
rectifiables de dimensions diverses.

\ms
Key words/Mots cl\'es. Rectifiability, Tangent measures, Doubling measures,
Wasserstein distance


\tableofcontents

\section{Introduction}\label{S1}
\subsection{Statement of Results}

In this paper we are concerned with understanding  the rectifiability properties of doubling measures. 
Our ultimate goal is to characterize rectifiable doubling measures. Recently Tolsa provided such a characterization
for 1-rectifiable measures with upper density bounded below (see \cite{To}). His conditions are expressed
in terms of the properties of the density ratios. We are interested in whether self-similarity properties 
yield some sort of rectifiabilty. 
Roughly 
speaking we analyze how the distance between the dilations of a measure appropriately scaled
yield information about the structure of its support. We provide a criteria which ensures that 
the support of a doubling measure can be decomposed as a union of rectifiable pieces
of different dimensions.
In a previous paper \cite{ADTprep},  similar decompositions were obtained by
looking 
at conditions that were expressed in terms of the properties of the \emph{local} distance between the 
measure and flat measures (that is multiples of Hausdorff measures restricted to affine subsets 
of Euclidean space).
In both cases a minor variant of the $L^1$ Wasserstein distance is used to estimate the \emph{good features} of a measure.
 
To present our results we need to define local distances between measures as well as several quantities which describe 
the self similar character of a measure.
In this paper, $\mu$ denotes a Radon measure on $\R^n$
(i.e., a locally finite positive Borel measure),
and $\Sigma = \Sigma_{\mu}$ denotes its support. That is,  
\begin{equation} \label{1.1}
\Sigma = \big\{ x\in \R^n \, ; \,  \mu(B(x,r)) > 0 \text{ for } r > 0\big\},
\end{equation}
where $B(x,r)$ denotes the open ball centered at $x$ and with radius $r$.
We say that $\mu$ is \textit{doubling} when there is a constant $C_\delta>0$ for which
\begin{equation}\label{1.2}  
\mu (B(x,2r)) \leq C_\delta \, \mu(B(x,r)) \mbox{ for all $x\in \Sigma$ and } r>0.
\end{equation}
Let $\bB = B(0,1)$ denote the unit ball in $\R^n$. For $M \geq 0$, denote by
$\Lip_M(\bB)$ the set of functions $\psi : \R^n \to \R$ that are 
$M$-Lipschitz, i.e., such that 
\begin{equation}\label{1.3}  
|\psi(x)-\psi(y)| \leq M |x-y| \ \text{ for } x,y\in \R^n,
\end{equation}
and for which 
\begin{equation}\label{1.4}  
\psi(x)=0 \ \text{ for } x\in \R^n \sm \bB.
\end{equation}

\begin{definition} \label{t1.1}
Let $\mu$ and $\nu$ be measures on $\R^n$, whose restrictions to
$\bB:= B(0,1)$ are probability measures. We set
\begin{equation} \label{1.5}
\W_1(\mu,\nu):=\sup_{\psi \in \Lip_1(\bB)}\av{\int\psi d\mu-\int\psi d\nu}.
\end{equation}
\end{definition}

Thus $\W_1(\mu,\nu)$ only measures some distance between the restrictions 
to $\bB$ of $\mu$ and $\nu$.
This quantity is similar to the usual $L^1$-Wasserstein distance, which by the Kantorovich duality theorem 
has the same definition as $\W_1$ except that the infimum ranges over 
all $1$-Lipschitz nonnegative functions in $\bB$. 
Note that $\W_1$ has appeared before in the study of rectifiability of measures;
see for example \cite{Preiss87}, \cite{Tolsa-uniform-rectifiability}, 
\cite{Tolsa-mass-transport}, and \cite{ADTprep}.
In Section \ref{S5}, we replace $\W_1$ with a smoother version of local distance $\W_\varphi$ 
which is easier to manipulate.
Lemmas \ref{t5.1} and \ref{t5.3} state that $\W_1$ and $\W_\varphi$ are essentially comparable.
We refer to \cite{Villani} for a detailed introduction to Wasserstein distances and their properties. 

\ms

To estimate the self-similarity properties of $\mu$ we use 
several groups of affine transformations of $\R^n$.
Denote by $\cR$ the group of affine isometries of $\R^n$ (i.e., compositions of 
translations, rotations, and symmetries).
Then let $\cG$ denote the group of similar affine transformations, defined by
\begin{equation} \label{1.6}
\cG = \big\{ \lambda R \, ; \, \lambda > 0 \text{ and } R \in \cR \big\}.
\end{equation}
For $G \in \cG$, we denote by $\lambda(G)$ the unique positive number such that
$G = \lambda(G) R$ for some $R\in \cR$. 
Denote by $\cD$ the group of translations and dilations:
\begin{equation} \label{1.7}
\cD = \big\{ \lambda I + a \, ; \, \lambda > 0 \text{ and } a \in \R^n \big\},
\end{equation}
where $I$ denotes the identity on $\R^n$.

The transformations that map a given $x\in \R^n$
to the origin, are denoted by
\begin{equation} \label{1.8}
\cG(x) = \big\{ G \in \cG ; \, G(x) = 0 \big\}
\ \text{ and } \ 
\cD(x)  = \big\{ D \in \cD ; \, D(x) = 0 \big\}.
\end{equation}

To each $G\in \cG$, we associate the measure $\mu^G = G_\sharp \mu$,
which is defined by
\begin{equation} \label{1.9}
\mu^G(A) = \mu(G^{-1}(A))
\ \text{ for every Borel set } A \subset \R^n. 
\end{equation}
When $G \in \cG(x)$ for some $x\in \Sigma$, be may normalize $\mu^G$
and set
\begin{equation} \label{1.10}
\mu^G_0 = {\mu^G \over \mu^G(\bB)} = {\mu^G \over \mu(G^{-1}(\bB))}
= {\mu^G \over \mu(B(x,\lambda(G)^{-1}))}
\end{equation}
because $\mu(B(x,\lambda(G)^{-1})) > 0$. This normalization is needed
if we want to compute $\W_1$-distances. 

A special case of this is when
$G = T_{x,r}$, the element of $\cD$ that maps $B(x,r)$ to $\bB$; then
$\mu^G$ and $\mu^G_0$ are denoted by $\mu^{x,r}$ and $\mu^{x,r}_0$
respectively. That is,
\begin{equation} \label{1.11}
\mu^{x,r}(A) =  \mu(x + rA) \text{ for } A \subset \R^n,
\ \text{ and } \ 
\mu^{x,r}_0 = {\mu^{x,r} \over \mu(B(x,r))}.
\end{equation}

To measure the self-similar nature of $\mu$ we introduce two quantities $\alpha_{\cG}$ and 
$\alpha_{\cD}$.
We fix two parameters $1 < \lambda_1 < \lambda_2 < \infty$.
Set
\begin{equation} \label{1.12}
\cG(x,r) =  \big\{ G\in \cG(x) \, ; \, \lambda_1 r \leq \lambda(G)^{-1} \leq \lambda_2 r 
\big\}
\end{equation}
and then
\begin{equation} \label{1.13}
\alpha_{\cG}(x,r) = \inf\big\{ \W_1(\mu_0^{G},\mu_0^{x,r}) \, ; \, 
G \in \cG(x,r) \big\}.
\end{equation}
Thus, if $\alpha_{\cG}(x,r)$ is small, this means that in $\bB$, 
$\mu^{x,r}_0$ is close to some measure $\mu^G_0$,
obtained via a transformation $G$ that contracts more than $T_{x,r}$ and possibly rotates as well. 
After composition with $T_{x,r}^{-1}$, the fact that $\alpha_{\cG}(x,r)$ is small
can be interpreted as saying that 
in $B(x,r)$, $\mu$ is quite close to the measure $a \mu^{G'}$, 
where $G' = T_{x,r}^{-1} \circ G$ is a contracting element
of $\cG$ that fixes $x$ and $a >0$ is a 
normalizing constant. It is important to note that even though we allow some
flexibility in the choice of $G$ and $G'$, we demand that $G'(x) = x$. This is
the reason why the usual fractal measures do not satisfy the conditions below.

We also use the analogue of 
$\alpha_{\cG}(x,r)$ for the smaller group $\cD$.That is, set
\begin{equation} \label{1.14}
\cD(x,r) =  \big\{ D\in \cD(x) \, ; \, 
\lambda_1 r \leq \lambda(G)^{-1} \leq \lambda_2 r \big\}
\end{equation}
and 
\begin{equation} \label{1.15}
\alpha_{\cD}(x,r) = \inf\big\{ \W_1(\mu_0^{G},\mu_0^{x,r}) \, ; \, 
G \in \cD(x,r) \big\}.
\end{equation}
For $\alpha_{\cD}(x,r)$ we only compare $\mu$ with its image by some dilation 
centered at $x$. Obviously $\alpha_{\cD}(x,r) \geq \alpha_{\cG}(x,r)$. Thus conditions
on $\alpha_{\cD}(x,r)$ are more restrictive than those on $\alpha_{\cG}(x,r)$.

Our goal is to get a control on the part of $\Sigma$ (see \eqref{1.1}) where either $\alpha_{\cG}(x,r)$,
or $\alpha_{\cD}(x,r)$, are sufficiently small. More precisely, we want to control the sets
\begin{equation} \label{1.16}
\Sigma_1 = \big\{ x\in \Sigma \, ; \, 
\int_0^1 \alpha_{\cD}(x,r) {dr \over r } < \infty \big\}
\end{equation}
and 
\begin{equation} \label{1.17}
\Sigma_2 = \big\{ x\in \Sigma \, ; \, 
\int_0^1 \alpha_{\cG}(x,r) {\log(1/r) dr \over r}  < \infty \big\}.
\end{equation}

\ms
\begin{theorem}\label{t1.2}
Let $\mu$ be a doubling measure on $\R^n$, and denote by $\Sigma$
its support. Let $1 < \lambda_1 < \lambda_2 < \infty$ be given,
and define the sets $\Sigma_1$ and $\Sigma_2$ as above. 
Then there are sets $\cS_{0},...,\cS_{n} \subset \Sigma$, such that 
\begin{equation} \label{1.18}
\mu\Big( (\Sigma_1 \cup \Sigma_2) \sm \Big(\bigcup_{d=0}^{n} \cS_{d} \Big)\Big) =0,
\end{equation}
and moreover 
\begin{itemize}
\item $\cS_{0}$ is the set of points where $\Sigma$ has an atom; it is at most countable, and
every point of $\cS_{0}$ is an isolated point of $\Sigma$. 
\item For $1 \leq d \leq n$, if $x\in \cS_d$, there exists a 
$d$-dimensional vector space $V_x$ such that all the tangent measures to $\mu$
at $x$ (defined below) are multiples of the Lebesgue measure on $V_x$.
\item For $1 \leq d \leq n$, $\cS_{d}$ is $d$-rectifiable, and it can be covered by a countable family
of Lipschitz graphs of dimension $d$. 
\end{itemize}
\end{theorem}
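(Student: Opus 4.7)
The plan is to first dispose of $\cS_0$, and then to reduce the remaining statements to showing that at $\mu$-almost every point $x \in (\Sigma_1 \cup \Sigma_2)\sm \cS_0$ the tangent measures of $\mu$ at $x$ are all multiples of Lebesgue measure on a fixed linear subspace $V_x$. For $\cS_0$, suppose $x\in\Sigma$ is an atom with $\mu(\{x\})=a>0$ and is not isolated in $\Sigma$; pick $y_n\in\Sigma\sm\{x\}$ with $r_n:=|y_n-x|\to 0$. Three iterations of doubling at $y_n$, together with $B(y_n,4r_n)\supset B(x,3r_n)\ni x$, give $\mu(B(y_n,r_n/2))\geq C_\delta^{-3}\mu(B(y_n,4r_n))\geq C_\delta^{-3}a$. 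Since $B(y_n,r_n/2)\subset B(x,2r_n)\sm\{x\}$, this forces $\mu(B(x,2r_n))\geq a(1+C_\delta^{-3})$ for every $n$, contradicting $\mu(B(x,2r_n))\downarrow \mu(\{x\})=a$. So atoms are isolated and $\cS_0$ is countable.

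For the remaining points, the next step is to prove that under \eqref{1.16} or \eqref{1.17}, the rescaled measures $\mu_0^{x,r}$ converge in $\W_1$ (equivalently, in the smoother $\W_\varphi$ of Section~\ref{S5}) to a unique tangent measure $\nu_x$ at $\mu$-a.e.\ such $x$. The mechanism is a telescoping chain across scales: near-optimal competitors $G_k\in\cD(x,r_k)$ (respectively $\cG(x,r_k)$) realize $\W_1(\mu_0^{x,r_k},\mu_0^{G_k})\leq 2\alpha(x,r_k)$, and $\mu_0^{G_k}$ is itself a rotated copy of $\mu_0^{x,s_k}$ for some $s_k\in[\lambda_1 r_k,\lambda_2 r_k]$, so one can walk from scale $r_k$ up to a comparable scale $r_{k-1}\sim s_k$ with controlled $\W_1$-jump. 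Summing the jumps, the condition in \eqref{1.16} provides a convergent series for $\Sigma_1$, while for $\Sigma_2$ the extra factor $\log(1/r)$ pays exactly for the potential accumulation of rotation angles as one composes the isometry parts of consecutive $G_k$'s. This shows that $\mu_0^{x,r_k}$ is Cauchy, and hence converges to a unique limit $\nu_x$.

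The heart of the argument is to identify $\nu_x$ as flat. Extracting a subsequence so that $s_k/r_k\to\sigma\in[\lambda_1,\lambda_2]$ and (for $\Sigma_2$) the rotation parts of the $G_k$'s converge, the limit inherits an invariance $\nu_x=c\, H_\sharp\nu_x$ for some nontrivial contracting similarity $H$ fixing $0$. Iterating $H$ yields invariance under a rich semigroup of similarities, and combining this with the doubling of $\nu_x$ and with the tangent-of-tangent principle (a tangent measure of $\nu_x$ at any point is again a tangent measure of $\mu$ at $x$, hence inherits an analogous self-similarity at a \emph{different} base point) should force $\nu_x$ to be conical about two distinct base points. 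For a doubling measure this pins it down as a constant multiple of $\cH^d|_{V_x}$ for a linear subspace $V_x\subset\R^n$ of some dimension $d$. Turning this classification into a rigorous statement, and in particular controlling the rotation-induced drift in the $\Sigma_2$ case (which is exactly why the logarithmic weight in \eqref{1.17} is needed), is the main technical obstacle and will presumably occupy the bulk of the paper.

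With $V_x$ defined $\mu$-a.e.\ on $(\Sigma_1\cup\Sigma_2)\sm\cS_0$, set $\cS_d:=\{x:\dim V_x=d\}$, so that \eqref{1.18} holds by construction. The $\W_1$-closeness of $\mu_0^{x,r}$ to $\cH^d|_{V_x}$, combined with doubling, translates into $o(r)$-Hausdorff-closeness of $\Sigma\cap B(x,r)$ to $x+V_x$ at all small scales $r$. Partitioning each $\cS_d$ into countably many Borel pieces on which these quantitative parameters are uniform and on which $x\mapsto V_x$ varies slowly, one then invokes the Reifenberg-type Lipschitz-graph criteria already established in \cite{ADTprep} to cover each piece by a single $d$-dimensional Lipschitz graph, producing the desired countable covering.
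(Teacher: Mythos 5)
Your outline agrees with the paper at the top level (dispose of atoms, telescope across scales, identify the limit as flat, cover $\cS_d$ by Lipschitz graphs), and your direct doubling argument that atoms are isolated is a clean substitute for the paper's reference to Lemma~2.3 of \cite{ADTprep}. But there is a genuine gap in your route to flatness, and it is the central one.

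The limiting self-similarity you obtain from the chain of competitors $G_k\in\cG(x,r_k)$ is a single relation $\nu_x = c\,H_\sharp\nu_x$ for one similarity $H$ fixing the \emph{origin}. That alone does not force $\nu_x$ to be flat: iterating $H$ gives a discrete one-parameter family of similarities through one point, which is compatible with many non-flat self-similar measures. What the paper actually proves (Lemma~\ref{t3.1}) is much stronger: for \emph{every} point $y$ in the support of the tangent measure $\sigma$ there is a similarity $H(y)$ fixing $y$ with $H(y)_\sharp\sigma=c(y)\sigma$. Getting invariance at off-center points is exactly where your proposal breaks down, and it is precisely why the paper introduces the averaged quantity $\alpha_\cG^\ast(x,r)=\fint_{B(x,r)}\fint_r^{2r}\alpha_\cG(y,t)\,d\mu(y)\,dt$ and the set $\Sigma_0$ of \eqref{1.24}, and then spends Section~\ref{S2} proving $\mu((\Sigma_1\cup\Sigma_2)\sm\Sigma_0)=0$ by a Lebesgue-density argument. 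The averaging over $y\in B(x,r)$ is what lets one pick competitors centered near the rescaled image of a given support point $y\in\supp\sigma$, so that after blow-up the similarity fixes $y$ rather than $0$. Your proposal never introduces this averaged quantity, so it has no mechanism for producing invariance at a second base point.

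Your workaround --- invoking the tangent-of-tangent principle to say that a tangent of $\nu_x$ at some other point $y$ inherits self-similarity --- does not repair this. It would give information about $\Tan(\nu_x,y)$, not about $\nu_x$ itself; one cannot conclude from it that $\nu_x$ is invariant under a similarity fixing $y$. The claim that ``conical about two distinct base points'' pins down a flat measure is also in need of proof; the paper instead extracts convexity of the support (Lemma~\ref{t4.2}), then that it is a linear subspace (Lemma~\ref{t4.3}), then Ahlfors regularity of some dimension $D$ (Lemma~\ref{t4.4}), and finally constancy of the density (Lemma~\ref{t4.5}), using the everywhere-defined family $\{H(y)\}_{y\in\Xi}$ throughout. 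Finally, a smaller point: the logarithmic weight in \eqref{1.17} does not arise from rotation drift as such; in the paper it appears from a Fubini-type double sum over dyadic scales (see \eqref{8.18}) after the rotations have already been absorbed into the isometry-invariance of $\W_\varphi$, and the $\Sigma_2$ case is finished by verifying the hypothesis $\int_0^1\alpha(x,r)\,dr/r<\infty$ of Theorem~1.5 in \cite{ADTprep} rather than by directly proving uniqueness of $\nu_x$.
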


\ms
We may see Theorem \ref{1.2} as a structural decomposition of the good parts of $\Sigma$.
Tangent measures will play an important role in proof and the definition of the $\cS_{d}$.
Recall that the set of {\it tangent measures} to $\mu$ at $x$, which will be denoted by
$\Tan(\mu,x)$, is the set of non-zero Radon measures $\sigma$ for which 
there exist a sequence $\{ r_k \}$, with $\lim_{k \to \infty} r_k = 0$,
and a sequence $\{ a_k \}$ of nonnegative numbers, such that
\begin{equation} \label{1.19}
\sigma \text{ is the weak limit of the measures } a_k \mu^{x,r_k}, 
\end{equation}
where the $\mu^{x,r_k}$ are as in \eqref{1.11}. That is, for every continuous function $f$ with compact support,
\begin{equation} \label{1.20}
\int f d\sigma = \lim_{k \to \infty} a_k \int f d\mu^{x,r_k}.
\end{equation}

Note that since here $\mu$ is doubling, $\Tan(\mu,x)$ is not empty 
(see for instance the proof of Lemma~2.1 in \cite{ADTprep}).
Furthermore if  $\mu$ satisfies \eqref{1.2} and $\sigma\in \Tan(\mu,x)$ then $\sigma$ is also doubling 
with a constant at most $C_\delta^2$.
A priori $\Tan(\mu,x)$ may be large. Nevertheless Theorem \ref{t1.2} ensures that for $x\in \cS_d$,  $\Tan(\mu,x)$
is of dimension~$1$.

A Lipschitz graph of dimension $d$ is a set $\Gamma_A$ 
such that 
$$
\Gamma_A = \big\{ x+A(x) \, ; \, x\in V \big\},
$$
where $V$ is a vector space of dimension $d$, $A : V \to V^\perp$ is a Lipschitz map and 
$V^\perp$ denotes the $(n-d)$-dimensional vector space perpendicular to $V$.
In the statement of Theorem \ref{t1.2}, $\cS_d$ can be covered by Lipschitz graphs
where the corresponding function $A$ has Lipschitz constant less than $\varepsilon$, 
where $\varepsilon > 0$ is any small number given in advance.
Note that this yields that $\cS_d$ is $d$-rectifiable while providing additional information in the sense that 
$\cS_d$ is completely covered by Lipschitz graphs not simply up to a set of $\H^d$-measure zero.
Let us make a few more remarks on Theorem \ref{t1.2} and its proof. 
The advantage of using the quantities $\alpha_\cG$ and $\alpha_\cD$ is that they yield information 
not only about the geometry of the support but also about how the measure is distributed on it. 
The decomposition of $\Sigma_1 \cup \Sigma_2$ into pieces of different dimensions is possible once we prove 
that for $\mu$-almost every point $x\in \Sigma_1 \cap \Sigma_2$,
$\Tan(\mu,x)$ is entirely composed of flat measures of a same dimension depending on $x$. 
Recall that {\it flat measures} are multiples of Lebesgue measures on vector
subspaces of $\R^n$; that is, for each integer $d\in [0,n]$, set 
\begin{equation} \label{1.21}
\cF_d = \big\{ c \H^d\res{ V} \, ; \, c \geq 0 \text{ and } V\in G(d,n) \big\},
\end{equation}
where $\H^d$ denotes the $d$-dimensional Hausdorff measure 
(see \cite{Mattila} or \cite{Federer})
and $G(d,n)$ is the set of $d$-planes in $\R^n$.
The set of flat measures is $\cF = \bigcup_{0 \leq d \leq n} \cF_d$.

It is natural to use the self-similarity properties of $\mu$ to get information
on the structure of $\Sigma$, as in Theorem \ref{t1.2}. In particular the 
numbers $\alpha_{\cG}(x,r)$ provide an intrinsic way to measure the regularity of $\mu$.
We contrast this approach with the one taken in \cite{ADTprep} where we 
were interested on the local approximation of the measure by flat measures.

As we shall see in Section \ref{S8}, the additional logarithm in \eqref{1.17} is used
to sum a series which allows us to control the density of $\mu$ on most of $\Sigma_2$.
It may well be an artifact of the proof.

To prove Theorem \ref{t1.2} we need to find a set $\Sigma_0$ which covers almost all $\Sigma_1\cup\Sigma_2$
and such that all tangents to $\mu$ at points in $\Sigma_0$ are flat. To accomplish this 
we define an average analogue of the numbers $\alpha_{\cG}(x,r)$ by
\begin{equation} \label{1.23}
\alpha_{\cG}^\ast(x,r) 
= \fint_{B(x,r)} \fint_{r}^{2r} \alpha_{\cG}(y,t) d\mu(y)dt
\end{equation}
(where $\fint$ is our notation for an average)
and consider the set
\begin{equation} \label{1.24}
\Sigma_0 = \big\{ x\in \Sigma \, ; \, \lim_{r \to 0} \alpha_{\cG}^\ast(x,r) = 0 \big\}.
\end{equation}

\ms
\begin{theorem}\label{t1.3}
Let $\mu$ be a doubling measure on $\R^n$, and denote by $\Sigma$
its support. Let $1 < \lambda_1 < \lambda_2 < \infty$ be given,
and define the functions $\alpha_{\cG}(x,r)$ and $\alpha_{\cG}^\ast(x,r)$
and the set $\Sigma_0$ as above (see \eqref{1.13}, \eqref{1.23}, and \eqref{1.24}). 
Then 
\begin{equation} \label{1.25}
\Tan(\mu,x) \subset \cF \ \text{ for every } x\in \Sigma_0.
\end{equation} 
\end{theorem}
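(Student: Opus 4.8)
The plan is to fix $x \in \Sigma_0$ and an arbitrary tangent measure $\sigma \in \Tan(\mu,x)$, say $\sigma = \lim_k a_k \mu^{x,r_k}$ with $r_k \to 0$, and show that $\sigma$ is self-similar in a very strong sense: there is a single contracting similarity $S \in \cG$ (or rather a dilation-like map), with contraction ratio in $[\lambda_1^{-1},\lambda_2^{-1}]$, that fixes $0$ and satisfies $S_\sharp \sigma = c\,\sigma$ for some $c > 0$, \emph{and} this holds not just at the center but after recentering at every point of $\Sigma_\sigma := \supp \sigma$. The point of the averaged quantity $\alpha_{\cG}^\ast$ and the definition of $\Sigma_0$ is exactly this: $\lim_{r\to 0}\alpha_{\cG}^\ast(x,r)=0$ forces $\alpha_{\cG}(y,t)$ to be small for \emph{most} pairs $(y,t)$ in the relevant windows, and after blowing up this "most" becomes "all" on the support. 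So the first step is to make this blow-up transfer rigorous: using the doubling property and a Fubini/Chebyshev argument, deduce from $\alpha_{\cG}^\ast(x,r_k)\to 0$ that for the rescaled measures $\sigma_k := a_k\mu^{x,r_k}$, the analogous self-similarity defect $\alpha_{\cG}(y,t)$ computed with respect to $\sigma_k$ tends to $0$ uniformly for $y$ in a neighborhood of $\Sigma_\sigma$ and $t$ in a compact subinterval of $(0,\infty)$. Here one must be careful that $\alpha_{\cG}$ is defined via the normalized measures $\mu_0^{G}, \mu_0^{y,t}$, which are scale-invariant, so passing to the limit $\sigma_k \to \sigma$ (weak convergence, plus uniform doubling so that the normalizing masses converge and stay bounded away from $0$) yields $\alpha_{\cG}^{\sigma}(y,t) = 0$ for all $y\in \Sigma_\sigma$ and all $t>0$. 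For this continuity-of-$\alpha_{\cG}$-under-weak-limits step one uses that $\W_1$ (or better $\W_\varphi$, via Lemmas \ref{t5.1} and \ref{t5.3}) is continuous with respect to weak convergence of normalized measures, together with compactness of $\cG(y,t)$ modulo the rotation part, so the infimum defining $\alpha_{\cG}$ is attained and passes to the limit.

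The second step is to exploit $\alpha_{\cG}^{\sigma}(y,t)=0$ for all $y,t$. Fixing $y \in \Sigma_\sigma$ and $t$, the vanishing of the infimum means there is $G \in \cG(y,t)$ with $\W_1(\sigma_0^{G},\sigma_0^{y,t})=0$, i.e. the normalized restrictions to $\bB$ agree; unwinding, $G' := T_{y,t}^{-1}\circ G$ is a similarity fixing $y$, with contraction ratio $\rho \in [\lambda_1/\lambda_2, 1/\lambda_1] \subset (0,1)$ wait — more precisely $\lambda(G')^{-1}\in[\lambda_1,\lambda_2]$ in the rescaled picture so $G'$ contracts by a factor in $[\lambda_1^{-1},\lambda_2^{-1}]$ — such that $G'_\sharp \sigma$ and $\sigma$ are proportional on $B(y,t)$. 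Iterating $G'$ (it contracts, and fixes $y$) and using that $y \in \Sigma_\sigma$ was arbitrary, one propagates the relation to all scales, and then standard tangent-measure arguments (à la Preiss) let us conclude that $\sigma$ has a well-defined density behavior: $\sigma$ is its own tangent measure at every point of its support, uniformly, and it is conical/self-similar with a fixed contraction ratio bounded away from $1$. The crucial rigidity input is then: a doubling measure which is exactly self-similar at every point of its support with a uniform contraction ratio must be flat. This is where one invokes (or reproves) a Marstrand–Mattila–Preiss type rigidity theorem — the connectivity/homogeneity forced by the self-similarity at \emph{every} point, not just the origin, together with doubling, pins down the support to be an affine subspace and the measure to be a multiple of Lebesgue measure on it. The requirement $G'(y)=y$ (emphasized in the text after \eqref{1.13}) is exactly what rules out genuine fractals here and makes this rigidity available.

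I expect the main obstacle to be this last rigidity step: deducing flatness from uniform exact self-similarity with a contraction ratio that we do not get to choose (it may depend on $y$ and on the scale, and a priori could vary). One has to show that the family of "self-similarity maps" $G'$ one extracts is coherent enough — e.g. that one can choose a single $\rho$ and build a genuine conical structure — and then run a blow-up-of-the-blow-up or a dimension/density argument to force the support to have no "corners". The cleanest route is probably: (i) show $\sigma \in \Tan(\sigma, y)$ for every $y \in \Sigma_\sigma$ (uniform tangent), (ii) pick any tangent measure $\tau$ of $\sigma$ at a generic point — it inherits all the same properties and in addition, by iterating the contraction, is \emph{dilation}-invariant, hence a cone; (iii) a cone which is also self-similar at every point of its support and doubling is flat, by an elementary connectedness-plus-density argument or by quoting Preiss's theorem on measures with flat tangents. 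The secondary obstacle is bookkeeping in Step 1: ensuring the averaging windows in the definition \eqref{1.23} of $\alpha_{\cG}^\ast$ (radius $r$ for $y$, annulus $[r,2r]$ for $t$) rescale correctly under $T_{x,r_k}$ so that the "good" pairs $(y,t)$ are dense where we need them after the blow-up; this is a routine but careful Fubini argument using $C_\delta$-doubling to compare $\mu$-measure with radius.

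\begin{remark}
In writing this I am assuming the continuity of $\alpha_{\cG}(\cdot,\cdot)$ under weak convergence of the (normalized, uniformly doubling) rescalings, and the attainment of the infimum in \eqref{1.13}; both should follow from the compactness of $\cG(y,t)/\cR_0$ (the contraction ratio ranges over a compact set and rotations over a compact group) and from Lemmas \ref{t5.1} and \ref{t5.3} comparing $\W_1$ with the smoother $\W_\varphi$.
\end{remark}
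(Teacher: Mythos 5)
Your Step 1 (blow up, transfer $\alpha_\cG^\ast(x,r)\to 0$ to the tangent measure via Chebyshev/Fubini and doubling, extract a similarity $H$ fixing each $y\in\supp\sigma$ with $H_\sharp\sigma=c\sigma$ and contraction ratio in $[\lambda_2^{-1},\lambda_1^{-1}]$) is in substance the paper's Lemma~\ref{t3.1}, and you correctly identify the technical points (scale-invariance of $\alpha_\cG$, bounded mass ratios from doubling, lower semicontinuity of $\W_\varphi$ under weak convergence). One small caution: the Chebyshev argument hands you a good pair $(z_k,t_k)$ at scales $t_k\sim\rho_k$ with $\rho_k/r_k\to\infty$, so one does \emph{not} immediately get $\alpha_\cG^\sigma(y,t)=0$ for every fixed $t$; the paper conjugates $G_k$ by $R_k\circ T_k^{-1}$ to bring the action back to the blow-up coordinates, obtaining a \emph{single} map $H(y)$ with ratio in $[\lambda_2^{-1},\lambda_1^{-1}]$. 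The full-strength vanishing for all $t$ then follows only after iterating $H(y)$; this is a detour you don't need.

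The genuine gap is in the rigidity step. Your sketch (iterate to get a dilation-invariant further tangent $\tau$, then "a cone that is self-similar at every support point and doubling must be flat, by an elementary connectedness argument or by quoting Preiss") does not supply the argument, and Preiss's theorem is not available here: it requires a priori control on $d$-densities, which is precisely what one does not have (the paper explicitly notes it cannot invoke Preiss for this reason, and the Ahlfors regularity of $\sigma$ is a \emph{consequence} of the self-similarity, proved in Lemma~\ref{t4.4}, not an input). Your claim that iterating the contraction yields dilation invariance also elides the rotation part of $H(y)$; making this precise needs the compactness/power trick, which is exactly Lemma~\ref{t4.1}. The paper's actual rigidity argument is elementary and avoids further tangents entirely: (a) damp the rotation part by choosing powers $m(y)\ell$ so $\|R(y)^{m(y)\ell}-I\|\le\varepsilon$ (Lemma~\ref{t4.1}); (b) prove the support $\Xi$ is convex by a zigzag of points $y_{k+1}=H(y_k)^{m(y_k)\ell}(x)$ that converge to $x$ while staying within $2\varepsilon|x-y|$ of the segment $[x,y]$ (Lemma~\ref{t4.2}); (c) upgrade to "$\Xi$ is a vector subspace" by applying $H(y)^{-m}$ and a dimension count (Lemma~\ref{t4.3}); (d) show $\sigma$ is Ahlfors regular of a fixed dimension $D$ via $\sigma(B(y,\lambda(y)^{-m}r))=c(y)^m\sigma(B(y,r))$ and identify $D(y)=\log c(y)/\log\lambda(y)^{-1}$ as independent of $y$ (Lemma~\ref{t4.4}); (e) conclude $D=d=\dim\Xi$ and $\sigma=c_0\H^d\res\Xi$ via Lebesgue differentiation (Lemma~\ref{t4.5}). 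None of these steps is hard, but each requires a specific idea that your proposal does not contain, so the proof as outlined would not go through as written.
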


\ms
A consequence of \eqref{1.25} and the fact that elements of different
$\cF_d$ are far away from each other is that for each $x\in \Sigma_0$,
there is an integer $d \in [0,n]$ such that 
\begin{equation} \label{1.26}
\Tan(\mu,x) \subset \cF_d.
\end{equation} 
This is not too hard to prove. In the case of 
Theorem \ref{t1.2}, we obtain more than \eqref{1.26} directly, thus we omit the proof of this fact.

\ms
To deduce Theorem  \ref{t1.2} from
Theorem~\ref{t1.3}, we shall first check that 
\begin{equation} \label{1.22}
\mu\big((\Sigma_1 \cup \Sigma_2) \sm \Sigma_0 \big) = 0.
\end{equation}
This uses standard techniques from measure theory including 
the Lebesgue density theorem.
Then we show that for each $x\in \Sigma_0 \cap (\Sigma_1 \cup \Sigma_2)$,
\begin{equation} \label{1.27}
\Tan(\mu,x) = \big\{ c \sigma \, ; \, c >0 \big\}
\ \text{ for some } \sigma \in \cF.
\end{equation}

Let us now say a few words about the definition of the $\cS_d$. Set
\begin{equation} \label{1.28}
\cS_d = \big\{ x\in \Sigma_0 \cap (\Sigma_1 \cup \Sigma_2) \, ; \,
\Tan(\mu,x) \subset \cF_d \big\};
\end{equation} 
these sets are disjoint, and by \eqref{1.27} or \eqref{1.26}
\begin{equation} \label{1.29}
\Sigma_0 \cap (\Sigma_1 \cup \Sigma_2) = \bigcup_{d=0}^n \cS_d.
\end{equation}
The special set $\cS_0$ is easily dealt with at the beginning of Section \ref{S7}, 
and Theorem~\ref{t1.2} follows as soon as we prove that for $d \geq 1$,
\begin{eqnarray} \label{1.30}
&&\text{$\cS_d$ can be covered by a countable collection}
\\
&&\hskip4cm
\text{of Lipschitz graphs of dimension $d$.} 
\nonumber
\end{eqnarray}

The fact that information on the tangent measures may imply rectifiability properties
for the measure is much better understood since D. Preiss \cite{Preiss87}
showed that if $\mu$ is a Radon measure, not necessarily doubling, such that 
for $\mu$-almost every $x\in \R^n$, the $d$-density 
$\lim_{r\rightarrow 0}\frac{\mu(B(x,r))}{r^{d}}$ exists and is positive and finite,
then $\R^n$ may be covered, up to a set of $\mu$-measure zero, by a
countable collection of $d$-dimensional Lipschitz graphs. 
He deduced this from the hypothesis on the $d$-density and the
fact that at $\mu$-almost every $x\in \Sigma$, $\Tan(\mu,x) \subset \cF$.
In our case, we are unable to use \cite{Preiss87} because we are not given
any information on the density of $\mu$.

We shall use the fact that since $\mu$
is doubling, \eqref{1.27} implies the existence of a tangent $d$-plane to $\Sigma$ at 
$x$, and then \eqref{1.30} for the set where \eqref{1.27} holds.
We include a proof of these simple observations in Section \ref{S7}.

To prove \eqref{1.27}, we shall use the numbers $\alpha_{\cD}(x,r)$ and $\alpha_{\cG}(x,r)$ 
to control the variations of the measures $\mu^{x,r}$ on $\Sigma_1$ and $\Sigma_2$.
Eventually we compare them to the tangent measures.

For points of $\Sigma_1$ we use the triangle inequality
and the summability of the $\alpha_\cD(x,r)$, to show that the
distance between two different tangent measures at $x$ is controlled by
integrals that tend to $0$. To deal with some of the technical complications that 
arise with the distance $\W_1$, we shall introduce in Section \ref{S5} a smoother variant $\W_\varphi$
of the Wasserstein distance, study it briefly, and then use it in Section \ref{S6} to prove \eqref{1.27}
on $\Sigma_1 \cap \Sigma_0$. 

For points of $\Sigma_2$, we'll use the bounds on the numbers 
$\alpha_\cG(x,r)$ to compute the $\W_\varphi$-distance between the 
$\mu^{x,r}$ and the tangent measures. This time we can only work
modulo rotations, but this is enough to control the $\W_1$-distance
from the $\mu_0^{x,r}$ to flat measures, and apply 
Theorem~1.5 in \cite{ADTprep}. This yields additional information on $\Sigma_2\cap\Sigma_0$.
In particular, it guarantees that on the sets $\Sigma_2 \cap \cS_d$,
 $\mu$ is absolutely
continuous with respect to the Hausdorff measure $\H^d$, with
a density that can be computed from the measure of balls, and that
some local mutual absolute continuity of $\mu$ and 
$\H^d_{| \Sigma_2 \cap \cS_d}$ holds. See near \eqref{8.19}
for a statement, and the rest of Section \ref{S8} for the proof.

\ms
There is a significant difference between \eqref{1.25} (or even \eqref{1.26})
and the stronger \eqref{1.27}. For instance, let $\Sigma$ be an asymptotically flat 
snowflake in $\R^2$, constructed in the usual way but with angles that tend slowly to $0$. 
Put on $\Sigma$ the natural measure $\mu$, coming from the parameterization of 
$\Sigma$ (see \cite{DKT}).  In this case
for $\mu$-almost every $x\in \Sigma$, $\Tan(\mu,x)=\cF_1$. Of course $\Sigma$ is not rectifiable,
and Theorem \ref{t1.2} says that $\Sigma_1 \cup \Sigma_2$ is $\mu$-negligible.

\ms
The definitions \eqref{1.23} and \eqref{1.24}
ensure that for $x\in \Sigma_0$, the local self-similarity
character of $\mu$ improves as the balls get smaller and smaller, 
which yields self-similar tangent measures
at all point of $\Sigma_0$. That is, we show that if $x\in\Sigma_0$,
$\sigma \in \Tan(\mu,x)$ and $y$ lies in the support of $\sigma$,
there is a transformation $G\in \cG$ such that
$\lambda_1 \leq \lambda(H)^{-1} \leq \lambda_2$, $H(y)=y$,
and $H_\sharp \sigma = c \sigma$ for some $c> 0$.
See Lemma~\ref{t3.1}, in Section \ref{S3}. 
Once we prove this, showing that $\sigma$ is flat is mostly a matter of playing
with the invariance properties of the support and the measure; see Section \ref{S4}.

 \subsection{Acknowledgements}

The authors are grateful to Alessio Figalli and Xavier Tolsa for helpful discussions. 
Parts of this work were done while the first author was visiting IPAM and while the  
second author was visiting the University of Washington.

\section{Control of the averages $\alpha_\cG$}
\label{S2}

The main goal of this section is to prove \eqref{1.22}.
To this effect define the set
\begin{equation} \label{2.1}
\Sigma_3 = \big\{ x\in \Sigma \, ; \, 
\int_0^1 \alpha_{\cG}(x,r) {dr \over r}  < \infty \big\}.
\end{equation}
Notice that $\Sigma_1 \cup \Sigma_2 \subset \Sigma_3$, by \eqref{1.16}, \eqref{1.17}, 
and because $\alpha_{\cG}(x,r) \leq \alpha_{\cD}(x,r)$.
Thus \eqref{1.22} follows once we prove that
\begin{equation} \label{2.2}
\mu(\Sigma_3 \sm \Sigma_0) = 0.
\end{equation}
For $N > 0$ large and $k \geq 2$, let
\begin{equation} \label{2.3}
\Sigma_3(N) = \big\{ x\in \Sigma_3 \cap B(0,N) \, ; \, 
\int_0^1 \alpha_{\cG}(x,r) {dr \over r}  \leq N  \big\}
\end{equation}
and
\begin{equation} \label{2.4}
\varepsilon_k = \int_{\Sigma_3(N)} \int_{ 2^{-k}}^{2^{-k+2}}
\alpha_{\cG}(y,r) {d\mu(y) dr \over r}.
\end{equation}
Then
\begin{equation} \label{2.5}
\sum_{k \geq 2}\varepsilon_k \leq 2 \int_{ \Sigma_3(N)} \int_0^1
\alpha_{\cG}(x,r) {d\mu(x) dr \over r} \leq 2N \mu(\Sigma_3(N)) < \infty.
\end{equation}
Choose a decreasing sequence $\{ \gamma_k \}$ such that
\begin{equation} \label{2.6}
\lim_{k \to \infty} \gamma_k = 0
\ \text{ and } \ 
\sum_{k \geq 2} \gamma_k^{-1} \varepsilon_k < \infty. 
\end{equation}
For $x\in \Sigma_3(N)$, define auxiliary functions $\alpha_k$ by
\begin{equation} \label{2.7}
\alpha_k(x) = \int_{\Sigma_3(N) \cap B(x,2^{-k+1})} \int_{ 2^{-k}}^{2^{-k+2}}
\alpha_\cG(y,r) {d\mu(y) dr \over r}.
\end{equation}
Consider the bad sets
\begin{equation} \label{2.8}
Z_k = \big\{ x\in \Sigma_3(N) \, ; \, \alpha_k(x) \geq \gamma_k 
\mu(B(x,2^{-k+1})) \big\}.
\end{equation}
Our goal is to show that $Z_k$ is small. Let $X \subset Z_k$ be a maximal subset
whose points lie at distance at least $2^{-k+2}$ from each other. Thus the 
balls $\overline B(x,2^{-k+2})$, $x\in X$, cover $Z_k$, so by \eqref{1.2} and \eqref{2.8}
\begin{eqnarray} \label{2.9}
\mu(Z_k) &\leq & \sum_{x\in X} \mu(\overline B(x,2^{-k+2})) 
\leq C_\delta^2 \sum_{x\in X} \mu(B(x,2^{-k+1})) \nonumber\\
&\leq &C_\delta^2 \gamma_k^{-1} \sum_{x\in X} \alpha_k(x).
\end{eqnarray}
Since the balls $B(x,2^{-k+1})$, $x\in X$, 
are disjoint,
$$
\sum_{x\in X} \alpha_k(x) \leq \varepsilon_k
$$
(compare \eqref{2.4} and \eqref{2.7}); thus
$\mu(Z_k) \leq C_\delta^2 \gamma_k^{-1} \varepsilon_k$.
We are not interested in the precise bound, but merely the fact that
$\sum_k \mu(Z_k) < \infty$,
from which we deduce that if we set $Z^\ast_l = \bigcup_{k \geq l} Z_k$
for $l \geq 2$, then $\lim_{l \to \infty} \mu(Z^\ast_l) = 0$.
Thus for $\mu$-almost every $x\in \Sigma_3(N)$ there is $k_x\in \N$ such that 
\begin{equation} \label{2.10}
x \in \Sigma_3(N) \sm Z_k \hbox{ for } k\ge k_x.
\end{equation}
By the Lebesgue differentiation theorem applied to the doubling measure $\mu$, 
we have that for $\mu$-almost every $x\in \Sigma_3(N)$
\begin{equation} \label{2.11}
\lim_{r\to 0} {\mu(B(x,r) \cap \Sigma \sm \Sigma_3(N)) 
\over  \mu(B(x,r) \cap \Sigma)} = 0;
\end{equation}
see for instance Corollary 2.14 in \cite{Mattila}.

Let $x\in \Sigma_3(N)$ satisfy \eqref{2.10} and \eqref{2.11}; we want to
estimate $\alpha_{\cG}^\ast(x,r)$ for $r$ small. Choose $k$ such that
$2^{-k} \leq r \leq 2^{-k+1}$; then $k\ge k_x$ for $r$ small. Recall from \eqref{1.23} that
\begin{equation} \label{2.12}
\alpha_{\cG}^\ast(x,r) = \mu(B(x,r))^{-1}\int_{y\in B(x,r)} \fint_{r}^{2r} \alpha_{\cG}(y,t) d\mu(y)dt.
\end{equation}
We decompose the domain of integration above into two parts and estimate each one separately.
By \eqref{1.2}, \eqref{2.7}, \eqref{2.10}, and \eqref{2.8}, 
\begin{eqnarray}  \label{2.13}
&\,& \hskip-2cm
\mu(B(x,r))^{-1} \int_{\Sigma_3(N) \cap B(x,r)} 
\fint_{r}^{2r} \alpha_{\cG}(y,t) d\mu(y)dt
\nonumber \\
&\leq& 4\mu(B(x,2^{-k}))^{-1} \int_{\Sigma_3(N) \cap B(x,r)}
\int_{2^{-k}}^{2^{-k+2}}  \alpha_{\cG}(y,t) {d\mu(y)dt \over t}
\nonumber \\
&\leq&  4 C_\delta \mu(B(x,2^{-k+1}))^{-1} \alpha_k(x) \leq 4 C_\delta \gamma_k.
\end{eqnarray}
This term tends to $0$ when $r$ tends to $0$, by \eqref{2.6}.

For the second part, we notice that $\alpha_{\cG}(y,t) \leq 2$
by definition (see \eqref{1.13} and \eqref{1.5}), so
\begin{eqnarray} \label{2.14}
&\,& \hskip-3cm\mu(B(x,r))^{-1} \int_{\Sigma\cap B(x,r) \sm \Sigma_3(N)} 
\fint_{r}^{2r} \alpha_{\cG}(y,t) d\mu(y)dt
\nonumber\\
&\leq& 2 \mu(B(x,r))^{-1} \mu(\Sigma\cap B(x,r) \sm \Sigma_3(N)),
\end{eqnarray}
which tends to $0$ by \eqref{2.11}. Combining \eqref{2.13} and \eqref{2.14} we get that
$$
\lim_{r \to 0} \alpha_{\cG}^\ast(x,r) = 0
\ \text{ for $\mu$-almost every } x\in \Sigma_3(N). 
$$
In other words, $\mu(\Sigma_3(N) \sm \Sigma_0) = 0$
(see \eqref{1.24}); \eqref{2.2} follows easily, and so does \eqref{1.22}.

\section{Tangent measures are self-similar}
\label{S3}

In this section we start the proof of Theorem \ref{t1.3} and
prove the basic self-similarity estimate for tangent measures.

\begin{lemma} \label{t3.1} 
Let $\mu$ be a doubling measure, let $\Sigma$ denote its support,
let $\Sigma_0\subset \Sigma$ be as in \eqref{1.24}, and 
$\sigma \in \Tan(\mu,x)$ be a tangent measure of $\mu$ at a point $x\in \Sigma_0$.
For each $y \in \Xi$, the support of $\sigma$, there exist $H \in \cG$ such that
$H(y) = y$, 
\begin{equation} \label{3.1}
\lambda_1 \leq \lambda(H)^{-1} \leq \lambda_2
\end{equation}
and 
\begin{equation} \label{3.2}
H_\sharp \sigma = c \sigma \ \text{ for some } c > 0.
\end{equation}
\end{lemma}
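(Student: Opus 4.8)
The plan is to extract the self-similarity of the tangent measure $\sigma$ from the smallness of $\alpha_{\cG}^\ast$ along the sequence of scales defining $\sigma$, using a two-parameter diagonalization. Fix $x \in \Sigma_0$ and a sequence $r_k \to 0$, together with normalizing constants $a_k$, so that $a_k \mu^{x,r_k} \to \sigma$ weakly. Fix $y \in \Xi = \supp\sigma$. The first step is to transfer weak convergence to the level of supports: since $y \in \supp \sigma$ and the $a_k \mu^{x,r_k}$ converge weakly to $\sigma$, one can find points $y_k$ with $y_k \to y$ and $y_k \in \supp(\mu^{x,r_k})$, i.e.\ $x + r_k y_k \in \Sigma$; this is the standard fact that supports are ``upper semicontinuous from below'' along weakly convergent sequences of (locally uniformly bounded) measures, and it uses that $\mu$ — hence $\sigma$ — is doubling so no mass escapes.

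Next I would exploit $x \in \Sigma_0$, i.e.\ $\lim_{r\to 0}\alpha_{\cG}^\ast(x,r) = 0$. Unwinding \eqref{1.23}, this says that the average over $(y',t) \in B(x,r)\times[r,2r]$ of $\alpha_{\cG}(y',t)$ tends to $0$. By Chebyshev/Fubini one can, for each $k$, select a scale $t_k$ comparable to $r_k$ (say $t_k \in [r_k,2r_k]$, after possibly relabeling the sequence $r_k$) and a point $z_k$ close to $x + r_k y_k$ at scale $r_k$ — close enough that $z_k = x + r_k w_k$ with $w_k \to y$ — for which $\alpha_{\cG}(z_k,t_k)$ is small, say $\to 0$. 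Here one has to be a little careful that the averaging ball $B(x,r)$ in \eqref{1.23} has radius $r$, not $r_k|y|$, so a finite number of intermediate rescalings (or a minor enlargement argument using doubling) is needed to reach points near $x + r_k y_k$; this is a routine but slightly fussy point. Having $\alpha_{\cG}(z_k,t_k)$ small means there is $G_k \in \cG(z_k, t_k)$ with $\W_1(\mu_0^{G_k},\mu_0^{z_k,t_k})$ small. Writing $H_k := T_{z_k,t_k} \circ G_k^{-1}$ — wait, more precisely I would set $H_k' = G_k \circ T_{z_k,t_k}^{-1}$, an element of $\cG$ fixing $z_k$ (after conjugating back by $T_{z_k,t_k}^{-1}$), and translate the $\W_1$-smallness into: in $B(z_k,t_k)$, $\mu$ is $\W_1$-close (after normalization) to a constant multiple of $(H_k')_\sharp \mu$, with $H_k'$ contracting by a factor in $[\lambda_1,\lambda_2]$ and fixing $z_k$.

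Now I would rescale everything by $T_{x,r_k}$. The map $H_k'$ conjugated by $T_{x,r_k}$ becomes a map $\widetilde H_k \in \cG$ that fixes $w_k$ (the rescaled copy of $z_k$), has $\lambda(\widetilde H_k)^{-1} \in [\lambda_1 t_k/t_k, \dots]$ — i.e.\ still in $[\lambda_1,\lambda_2]$ since the ratio is scale-invariant — and such that $a_k \mu^{x,r_k}$ is, on a fixed ball around $w_k$, $\W_1$-close after normalization to a multiple of $(\widetilde H_k)_\sharp(a_k\mu^{x,r_k})$. Since $\{\widetilde H_k\}$ is a sequence of similarities with uniformly bounded and bounded-away-from-zero scaling ratio and with fixed points $w_k \to y$ staying bounded, after passing to a subsequence $\widetilde H_k \to H$ in $\cG$, with $H(y) = y$ and $\lambda(H)^{-1} \in [\lambda_1,\lambda_2]$, giving \eqref{3.1}. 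The normalizing constants likewise converge (by doubling, along a further subsequence) to some $c > 0$. Passing to the limit in the weak convergence $a_k\mu^{x,r_k} \to \sigma$ — and using that pushforward by $\widetilde H_k$ converges to pushforward by $H$ on test functions, since $\widetilde H_k \to H$ uniformly on compacts — together with the vanishing of the $\W_1$-distance, yields $H_\sharp \sigma = c\sigma$ on every ball around $y$, hence everywhere by a covering/exhaustion argument, which is \eqref{3.2}.

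\textbf{Main obstacle.} The hard part will be the bookkeeping in the second paragraph: converting the \emph{averaged} smallness of $\alpha_{\cG}$ over the ball $B(x,r)$ into smallness of $\alpha_{\cG}(z_k,t_k)$ at a \emph{specific} point $z_k$ lying near the rescaled target $x + r_k y_k$, which in general sits at distance $\sim r_k|y|$ from $x$ rather than inside $B(x,r_k)$. One must bridge this gap — e.g.\ by iterating the argument through $O(\log|y|)$ intermediate scales, or by invoking doubling to compare $\mu$ on $B(x,r_k)$ with $\mu$ on $B(x,(1+|y|)r_k)$ and thereby control the relevant averages — while keeping the errors summable/vanishing along the chosen subsequence. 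Everything else is a compactness argument in $\cG$ plus continuity of pushforward and of $\W_1$ under these limits, which are standard.
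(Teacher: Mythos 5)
You have the right high-level strategy (pull a near-fixed-point similarity out of the blow-up at a point $z_k$ near $r_ky_k$, then compact up in $\cG$ and pass to the limit), and you correctly flag the central obstacle: the ball $B(x,r)$ in the definition \eqref{1.23} of $\alpha_\cG^\ast(x,r)$ pairs with scales $[r,2r]$, while the target point $x+r_ky_k$ sits at distance $\sim r_k|y|$ from $x$. But the fix you propose is not the one that works, and a genuine piece of the mechanism is missing.

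First, a concrete error: you cannot take ``$t_k\in[r_k,2r_k]$''. If you need $z_k$ near $r_ky_k$ to lie in the averaging domain of $\alpha_\cG^\ast(0,r)$, you must take $r\gtrsim r_k|y|$, and then the scale $t_k$ you obtain from \eqref{1.23} lies in $[r,2r]$, i.e.\ $t_k\sim r_k|y|$, not $\sim r_k$. The paper goes much further: it selects a scale $\rho_k$ with $r_k<\rho_k<\sqrt{r_k}$ and $\rho_k/r_k\to\infty$ (slowly), and a small radius $\eta_kr_k$ with $\eta_k\to 0$ (slowly), so that $B(r_ky_k,\eta_kr_k)\subset B(0,\rho_k)$ for large $k$ and the average $A_k$ in \eqref{3.13} is controlled by $\alpha_\cG^\ast(0,\rho_k)$ up to a doubling factor $C_\delta^{2+\log_2(\rho_k/(\eta_kr_k))}$. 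The fact that the scale mismatch $\rho_k\gg r_k$ is harmless is not automatic: the Lipschitz constant of the transferred test function $\psi=\varphi\circ R_k\circ T_k^{-1}$ blows up by $r_k^{-1}t_k\sim r_k^{-1}\rho_k$, and the ratio $a_k/e_k'$ of normalizing constants contributes another $C_\delta^{O(\log_2(\rho_k/r_k))}$. So the final error bound $\beta_k$ is $\alpha_k$ amplified by all of these diverging factors, and the crux of the proof is that $\rho_k$ and $\eta_k$ can be chosen to diverge (resp.\ vanish) so slowly, relative to $\alpha_k=\sup_{0<r<\sqrt{r_k}}\alpha_\cG^\ast(0,r)$, that $\beta_k\to 0$ anyway. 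This diagonalization is the engine of the argument, and your proposal does not contain it.

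Second, your two suggested fixes do not substitute for this. ``Iterating through $O(\log|y|)$ intermediate scales'' would produce a chain of approximate self-similarities each fixing a different nearby point; composing them does not give a single element of $\cG$ fixing $y$ with scale ratio in $[\lambda_1,\lambda_2]$, and the errors at each stage have no reason to remain controlled. ``Invoking doubling to compare $\mu$ on $B(x,r_k)$ with $\mu$ on $B(x,(1+|y|)r_k)$'' only moves the obstruction; it does not produce, by itself, a small $\alpha_\cG(z_k,t_k)$ at a point $z_k$ near $r_ky_k$ with a Lipschitz-amplification you can absorb. Finally, the conclusion $H_\sharp\sigma=c\sigma$ does not require a covering/exhaustion step: since the amplified Lipschitz bound holds for every fixed compactly supported $1$-Lipschitz $\varphi$ (via $\psi=\varphi\circ R_k\circ T_k^{-1}$), you get the identity \eqref{3.43} directly for all such $\varphi$ after passing to the limit, and regularity of Radon measures upgrades it to Borel sets. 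The convergence of the normalizing constants also needs the two-sided estimate $C^{-1}e_k'\le e_k\le e_k'$ (from doubling and $\lambda_1>1$), which your sketch glosses over but which is needed to ensure $b=\lim e_k/e_k'>0$.
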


\ms 
The numbers $\lambda_1$ and $\lambda_2$ are the same as in \eqref{1.12},
the dilation number $\lambda(H)$ is defined below \eqref{1.6},
and $H_\sharp \sigma$, the push forward image of $\sigma$ by $H$, is 
defined as in \eqref{1.9}.

\begin{proof} 
We may assume, without loss of generality, that $x=0$.
Since $\sigma \in \Tan(\mu,x)$ there
are coefficients $a_k \geq 0$ and radii $r_k > 0$, such that
$ \lim_{k \to \infty} r_k = 0$,
and $\sigma$ is the weak limit of the measures $\{ \sigma_k \}$, where
\begin{equation} \label{3.4}
\sigma_k = a_k \mu^{0,r_k} = a_k \mu^{R_k}\hbox{  with } R_k(u) = r_k^{-1} u \ \text{ for } u \in \R^n. 
\end{equation}
Note that $R_k$ maps $B(x,r_k)=B(0,r_{k})$ to $\bB$. See \eqref{1.11} for the definition of $\mu^{x,r}$. 

Let
\begin{equation} \label{3.8}
\alpha_k = \sup\big\{ \alpha_\cG^\ast(0,r) \, ; \, 0 < r < \sqrt{r_k} \big\};
\end{equation}
then since $x\in\Sigma_0$ (see \eqref{1.24})
\begin{equation} \label{3.9}
\lim_{k \to \infty} \alpha_k = 0.
\end{equation}
By \eqref{3.8}, if for $k$ large
\begin{equation} \label{3.7}
r_k < \rho_k < \sqrt{r_k} 
\end{equation}
then $\alpha_\cG^\ast(0,\rho_k) \leq \alpha_k$ for these $k$.
Since $\sigma$ is the weak limit of the $\sigma_k$, for each $y\in\Xi:=\supp \sigma$ we can find points 
$y_k \in \supp(\sigma_k) = r_k^{-1} \Sigma$ such that
\begin{equation} \label{3.12}
\lim_{k \to \infty} |y_k-y| = 0.
\end{equation}
Let $\{\eta_k\}_{k\ge1}$ and $\{\rho_k\}_{k\ge 1}$ be sequences such that \eqref{3.7} holds
for $k$ large, and also
\begin{equation} \label{3.6}
\lim_{k \to \infty} {\rho_k \over r_k} = \infty
\ \text{ and } \ 
\lim_{k \to \infty} \eta_k = 0.
\end{equation}

Consider
\begin{equation} \label{3.13}
A_k =  \fint_{ B(r_k y_k, \eta_k r_k)} \fint_{ \rho_k}^{2\rho_k} 
\alpha_{\cG}(z,t) d\mu(z)dt.
\end{equation}
For $k$ large, the domain of integration $B(r_k y_k, \eta_k r_k)$ is contained 
in $B(0,\rho_k)$ (because $y_k$ tends to $y$, $\eta_k$ tends to $0$,
and $r_k^{-1}\rho_k$ tends to $+\infty$; see \eqref{3.12} and \eqref{3.6}).
Recall that $B(0,\rho_k) \times [\rho_k,2\rho_k]$ is the domain of integration in the definition of
$\alpha_\cG^\ast(0,\rho_k)$ (see \eqref{1.23}); hence for $k$ large
\begin{equation} \label{3.14}
A_k \leq {\mu(B(0,\rho_k)) \over \mu(B(r_k y_k, \eta_k r_k))} \,
\alpha_\cG^\ast(0,\rho_k)
\leq C_\delta^{2 + \log_2(\rho_k/(\eta_k r_k))} \alpha_k
\end{equation}
by \eqref{3.13}, \eqref{1.23}, \eqref{3.7}, \eqref{3.8}, and the doubling property \eqref{1.2}.
Later on, we will choose $\rho_k$ and $\eta_k$, depending on $\alpha_k$, 
so that $A_k$ is still small enough. 

By Chebyshev's inequality there exist
\begin{equation} \label{3.15}
z_k \in \Sigma \cap B(r_k y_k, \eta_k r_k)
\ \text{ and } \ t_k \in [\rho_k,2\rho_k]
\end{equation}
such that
\begin{equation} \label{3.16}
\alpha_{\cG}(z_k,t_k) \leq A_k.
\end{equation}
By the definition of $\alpha_{\cG}$ (see  \eqref{1.13}) there exists 
$G_k \in \cG(z_k,t_k)$ such that 
\begin{equation} \label{3.17}
\W_1(\mu_0^{G_k},\mu_0^{z_k,t_k}) \leq 2 A_k,
\end{equation}
which by \eqref{1.5} means that
\begin{equation} \label{3.18}
\av{\int\psi d\mu_0^{G_k}-\int\psi \mu_0^{z_k,t_k}} \leq 2 A_k 
 \ \text{ for any } \psi \in \Lip_1(\bB).
\end{equation}

Our next goal is to interpret \eqref{3.18} in terms of $\sigma_k$.
Let $T_k \in \cD$ be such that for $u\in \R^n$
\begin{equation} \label{3.19}
T_k(u) = {u-z_k \over t_k} \ \hbox{ and so }\ T_k(B(z_k,t_k)) = \bB.
\end{equation}
The definitions \eqref{1.10} and \eqref{1.11} yield
\begin{equation} \label{3.20}
\mu_0^{G_k} = e_k \mu^{G_k} \ \text{ and } \ 
\mu_0^{z_k,t_k} = e'_k \mu^{z_k,t_k} = e'_k \mu^{T_k},
\end{equation}
where $e_k$ and $e'_k$ come from the normalization, and are given by
\begin{equation} \label{3.21}
e_k = \mu^{G_k}(\bB)^{-1}
\ \text{ and } \ 
e'_k = \mu^{T_k}(\bB)^{-1}.
\end{equation}
Let $\psi$ be any Lipschitz function supported on $\bB$ and set 
$I_k = e_k \int_{G_k(\Sigma)}\psi d\mu^{G_k}$; by \eqref{3.20} and \eqref{1.9}, 
\begin{equation} \label{3.23}
I_k = e_k \int_{G_k(\Sigma)}\psi d\mu^{G_k} 
= e_k \int_\Sigma\psi(G_k(\xi)) d\mu(\xi)
= e_k \int\psi \circ G_k \, d\mu.
\end{equation}

By \eqref{3.4}, $\sigma_k = a_k \mu^{R_k} = a_k (R_k)_\sharp \mu$, hence
$\mu = a_k^{-1} (R_k^{-1})_\sharp \sigma_k$. Thus a similar computation to the one in \eqref{3.23} yields
\begin{equation} \label{3.24}
I_k = e_k a_k^{-1} \int \psi \circ G_k \circ R_k^{-1} d\sigma_k.
\end{equation}

A similar computation, with $G_k$ replaced by $T_k$, yields
\begin{equation} \label{3.25}
I'_k :=  \int\psi \mu_0^{z_k,t_k}= e'_k a_k^{-1} \int \psi \circ T_k \circ R_k^{-1} d\sigma_k.
\end{equation}

We want to apply \eqref{3.24} and \eqref{3.25} to special functions $\psi$.
Let $\varphi$ be a compactly supported 1-Lipschitz function. Let
\begin{equation} \label{3.27}
\psi = \varphi \circ R_k \circ T_k^{-1}.
\end{equation}
Note that $\psi$ is a Lipschitz function with constant less or equal to 
$r_k^{-1}t_k\le 2r_k^{-1}\rho_k$ (see \eqref{3.15}).
If $\varphi$ is supported in $B(0,R)$, for $k$ large enough the support of $\psi$ is contained in 
$$\begin{aligned}
T_k \circ R_k^{-1}(B(0,R)) &= T_k(B(0,r_kR)) = B\big({-z_k \over t_k}, t_k^{-1} r_k R\big)
\\
&\subset B(0, r_kt_k^{-1}(R+\eta_k+|y_k|))\subset B(0,1),
\end{aligned}
$$
 where we have used \eqref{3.15}, \eqref{3.6}, and \eqref{3.12}.
Because of \eqref{3.14}, $\wt \psi = (2r_k^{-1} \rho_k)^{-1}\psi$ is $1$-Lipschitz; 
then \eqref{3.18} applies to $\wt \psi$, and \eqref{3.14} yields
\begin{equation} \label{3.29}
|I'_k - I_k| \leq 4 A_k r_k^{-1} \rho_k \leq 
4 r_k^{-1} \rho_k C_\delta^{2 + \log_2(\rho_k/(\eta_k r_k))} \alpha_k
=: \wt \alpha_k
\end{equation}
where $I_k$ and $I'_k$ are as in \eqref{3.23} and \eqref{3.25} with $\psi$ coming from \eqref{3.27}.
The final identity is the definition
of $\wt \alpha_k$. Notice that even though $\varphi$
does not depend on $k$, $\psi$ does, but this is not an issue.

Note that by \eqref{3.27} and \eqref{3.25}, we have
\begin{equation} \label{3.30}
I'_k = e'_k a_k^{-1} \int \varphi d\sigma_k.
\end{equation}
Similarly, by \eqref{3.24} and \eqref{3.27} we have
\begin{equation} \label{3.31}
I_k = e_k a_k^{-1} \int \varphi \circ R_k \circ T_k^{-1} \circ G_k \circ R_k^{-1} d\sigma_k. 
\end{equation}
Set 
\begin{equation}\label{3.31A}
H_k = R_k \circ T_k^{-1} \circ G_k \circ R_k^{-1}.
\end{equation}
Then $H_k \in \cG$, and by \eqref{3.19}, its dilation factor $\lambda(H_k)$
(which is also the $n$-th root of its Jacobian determinant) is such that
\begin{equation} \label{3.32}
\lambda(H_k)^{-1} = \lambda(T_k) \lambda(G_k)^{-1}
= t_k^{-1} \lambda(G_k)^{-1} \in [\lambda_1, \lambda_2],
\end{equation}
because $G_k \in \cG(z_k,t_k)$, and by
the definition \eqref{1.12}. 
Note that by \eqref{3.4}, the fact that $G_k \in \cG(z_k,t_k)$ 
(see \eqref{1.12} and \eqref{1.8}), and \eqref{3.19}, we have
\begin{eqnarray} \label{3.33}
H_k(r_k^{-1} z_k) &= & R_k \circ T_k^{-1} \circ G_k \circ R_k^{-1}(r_k^{-1}z_k) 
= R_k \circ T_k^{-1} \circ G_k (z_k)\nonumber\\
&=&R_k \circ T_k^{-1}(0)= R_k(z_k)=
r_k^{-1} z_k.
\end{eqnarray}

Notice also that by \eqref{3.15}
$$
|r_k^{-1} z_k - y| \leq |r_k^{-1} z_k - y_k| + |y_k-y|
= r_k^{-1} | z_k - r_k y_k| + |y_k-y| \leq \eta_k + |y_k-y|.
$$
Thus $|r_k^{-1} z_k - y| $ tends to $0$ by \eqref{3.12} and \eqref{3.6}, so
\begin{equation} \label{3.34}
\lim_{k \to \infty} r_k^{-1} z_k = y.
\end{equation}
Combining \eqref{3.32}, \eqref{3.33}, and \eqref{3.34}, we deduce that $H_k$ lies in a compact subset
of $\cG$. Thus we can replace $\{ r_k \}$ by a subsequence for which the 
$H_k$ converge to a limit $H$. In addition,  \eqref{3.32}, \eqref{3.33} and \eqref{3.34}
imply that
\begin{equation} \label{3.35}
\lambda(H)^{-1} \in [\lambda_1, \lambda_2]
\ \text{ and } \ 
H(y) = y.
\end{equation}
Combining \eqref{3.29},  \eqref{3.30}, \eqref{3.31}, and \eqref{3.31A} we 
see that if $\varphi$ is a compactly supported $1$-Lipschitz function, then for $k$ large
\begin{equation} \label{3.36}
\Big| e_k a_k^{-1} \int \varphi \circ H_k \, d\sigma_k 
- e'_k a_k^{-1} \int \varphi d\sigma_k \Big| \leq \wt\alpha_k.
\end{equation}

By \eqref{3.21}, \eqref{1.9}, and \eqref{3.19}, 
\begin{equation} \label{3.37}
e'_k = \mu^{T_k}(\bB)^{-1} = \mu(T_k^{-1}(\bB))^{-1} = \mu(B(z_k,t_k))^{-1}.
\end{equation}
Similarly notice that $G_k(z_k) = 0$ because $G_k \in \cG(z_k,t_k) \subset \cG(z_k)$
(see \eqref{1.12} and \eqref{1.8}); then \eqref{3.21} and \eqref{1.9} yield
\begin{equation} \label{3.37A}
e_k = \mu^{G_k}(\bB)^{-1} = \mu(G_k^{-1}(\bB))^{-1}
= \mu(B(z_k,\lambda(G_k)^{-1}))^{-1}.
\end{equation}
In addition,
\begin{equation} \label{3.37B}
B(z_k,\lambda_1 t_k) \subset G_k^{-1}(\bB) = B(z_k,\lambda(G_k)^{-1})
\subset B(z_k,\lambda_2 t_k)
\end{equation}
because $G_k(z_k) = 0$ and by \eqref{1.12}.
Then \eqref{1.2}, \eqref{3.37}, \eqref{3.37A}, \eqref{3.37B}, and the fact that $\lambda_1>1$ yield
\begin{equation} \label{3.38}
C^{-1} e'_k \leq e_k \leq e'_k
\end{equation}
for some constant $C$ that depends on $C_\delta$ and $\lambda_2$.

To estimate $a_k$, consider a test function $f$ such that $ \1_{\bB}\le f\le \1_{2\bB}$.
By definition of $\sigma$, $\int f d\sigma = \lim_{k \to \infty} \int f d\sigma_k$.
By  \eqref{1.2}, \eqref{3.4} and the definition above \eqref{1.11}, we have
\begin{equation}\label{3.38A}
a_k \mu(B(0,r_k)) = \sigma_k(\bB)\le \int f d\sigma_k 
\le \sigma_k(2\bB) =   a_k \mu(B(0,2r_k)) \le C_\delta a_k \mu(B(0,r_k)).
\end{equation}
Moreover, since $\sigma$ is also doubling (see the remark below \eqref{1.20}), we have that
\begin{equation}\label{3.38B}
\sigma(\bB)\le \int f d\sigma \le \sigma(2\bB)\le C_\delta ^2\sigma(\bB).
\end{equation}

Thus by \eqref{3.38A}, \eqref{3.38B} and the definition of $\sigma$ there exists $C>1$ such that for $k$ large, 
\begin{equation}\label{3.38C}
C^{-1}  a_k \mu(B(0,r_k))\le  \sigma(\bB)\le CC_\delta^2 a_k \mu(B(0,r_k)).
\end{equation}

Recall that $\rho_k \leq t_k \leq 2\rho_k$ by \eqref{3.15}, and 
that $r_k^{-1} z_k$ is bounded by \eqref{3.34}; since $r_k^{-1} \rho_k$ tends to $+\infty$ 
by \eqref{3.6} we get that for $k$ large, $|z_{k}|<Cr_{k}<\rho_{k}\leq t_{k}$, and so
$B(z_k,t_k) \subset B(0, 2t_k) \subset B(0, 4\rho_k)$.
By \eqref{3.37}, since $0 \in \Sigma$, and by \eqref{1.2},
\begin{equation}\label{3.38D}
(e'_k)^{-1}=\mu(B(z_k,t_k)) \le \mu(B(0,4\rho_k))\leq  C_\delta^{3+\log_2(\rho_k/r_k)} \mu(B(0,r_k)).
\end{equation}
Combining \eqref{3.38C} and \eqref{3.38D} we obtain
\begin{equation} \label{3.40}
a_k \leq C C_\delta^{5+\log_2(\rho_k/r_k)} e'_k \cdot \sigma(\bB).
\end{equation}

Return to \eqref{3.36}, set $b_k = e_k/e'_k$, and observe that by \eqref{3.40} 
\begin{equation} \label{3.41}
\Big| b_k \int \varphi \circ H_k \, d\sigma_k - \int \varphi d\sigma_k \Big|
\leq {a_k \wt\alpha_k\over e'_k} 
\leq C C_\delta^{5+\log_2(\rho_k/r_k)} \wt\alpha_k \sigma(\bB).
\end{equation} 

Now we choose $\rho_k$ and $\eta_k$. Denote by
$$
\beta_k = C C_\delta^{5+\log_2(\rho_k/r_k)} \wt\alpha_k
= C C_\delta^{5+\log_2(\rho_k/r_k)} \cdot
4 r_k^{-1} \rho_k C_\delta^{2 + \log_2(\rho_k/(\eta_k r_k))} \alpha_k
$$
the right-hand side of \eqref{3.41} (see \eqref{3.29}). 
Since $\alpha_k$ tends to $0$ by \eqref{3.9},
we can choose $\rho_k$ and $\eta_k$ so that the constraints \eqref{3.6} 
and \eqref{3.7} hold, but the convergence in \eqref{3.6} is slow enough so
that
\begin{equation} \label{3.42}
\lim_{k \to \infty}  \beta_k = 0. 
\end{equation}
Recall from \eqref{3.38} that $C^{-1} \leq b_k \leq 1$, hence modulo passing to a subsequence
(which we relabel) we can guarantee that
$\lim_{k \to \infty} b_k = b > 0$. Letting $k$ tend to infinity in \eqref{3.41} we obtain
\begin{equation} \label{3.42A}
\lim_{k \to \infty} \int \varphi d\sigma_k = \int \varphi d\sigma.
\end{equation}
Since $\varphi$ is Lipschitz and compactly supported, so is
$\varphi\circ H$ and 
\begin{equation} \label{3.42B}
\lim_{k \to \infty} \int \varphi\circ H \, d\sigma_k 
= \int \varphi\circ H \, d\sigma
\end{equation}
because $\sigma$ is the weak limit of the $\sigma_k$.
Note that there is also a ball $B$ such that for $k$ large
$\varphi\circ H(x) = \varphi\circ H_k(x) = 0$ for
$x\in \R^n \sm B$; then
\begin{align}\label{3.42C}
\int |\varphi\circ H - \varphi\circ H_k| \, d\sigma_k 
&\leq ||\varphi||_{lip} \int_B |H - H_k| \, d\sigma_k \nonumber
\\
&\leq ||\varphi||_{lip} ||H - H_k||_{L^\infty(B)}\sigma_k(B)\nonumber
\\
&\leq 2 ||\varphi||_{lip} ||H - H_k||_{L^\infty(B)}\sigma(2B).
\end{align}
Thus
\begin{equation}\label{3.42D}
\lim_{k\rightarrow \infty}\left|\int  \varphi\circ H_k\, 
d\sigma_k -\int  \varphi\circ H\, d\sigma_k \right|=0.
\end{equation}
Combining \eqref{3.41}, \eqref{3.42}, \eqref{3.42A}, \eqref{3.42B}, \eqref{3.42C} and \eqref{3.42D}
we obtain that for any $1$-Lipschitz function $\varphi$ with compact support,
\begin{equation} \label{3.43}
b \int \varphi \circ H \, d\sigma = \int \varphi \,d\sigma.
\end{equation}
Since the Radon measure $\sigma$ is regular, \eqref{3.43} also 
holds for characteristic functions of Borel sets. Hence $b H_\sharp \sigma = \sigma$.

Recall that $\lambda(H)^{-1} \in [\lambda_1, \lambda_2]$
and $H(y) = y$ by \eqref{3.35}; thus the conclusion of Lemma \ref{t3.1}
hold, with $c=b^{-1}$.
\end{proof}

\section{Self-similar measures are flat}
\label{S4}

In this section we complete the proof of Theorem \ref{t1.3}. 
Using the notation in Section \ref{S3}, our goal is to show that 
if  $\sigma \in \Tan(\mu,x_0)$, where $x_0\in \Sigma_0$ (see \eqref{1.24})
then $\sigma$ is a flat measure. Lemma~\ref{t3.1} guarantees that for each $y\in \Xi$ 
(the support of $\sigma$), there is a transformation
$H(y) \in \cG$ and a constant $c(y) > 0$ such that
\begin{equation} \label{4.1}
\lambda(y) := \lambda(H(y)) \in [\lambda_2^{-1},\lambda_1^{-1}]
\end{equation}
and
\begin{equation} \label{4.2}
H(y)_\sharp \sigma = c(y) \sigma.
\end{equation}
By definition of $\cG$ (see \eqref{1.6}),
the linear part of $H(y)$ is of the form $\lambda(y) R(y)$, where $R(y)$ is a linear
isometry. Since $H(y)$ fixes $y$, this means that
\begin{equation} \label{4.3}
H(y)(u) = y + \lambda(y) R(y)(u-y)
\ \text{ for } u\in \R^n.
\end{equation}
The next lemma allows us to replace $H(y)$ with one of its large powers,
chosen so that its isometric part is very close to the identity.

\begin{lemma} \label{t4.1}
For each choice of $\varepsilon > 0$, there is an integer $m_0$, 
that depends only on $\varepsilon$ and $n$, such that for each $y\in \Xi$
and each integer $\ell \geq 1$, there is an integer $m(y) \in [1,m_0]$ such that
\begin{equation} \label{4.4}
|| R(y)^{m(y)\ell} - I || \leq \varepsilon.
\end{equation}
\end{lemma}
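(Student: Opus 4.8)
The plan is to treat $R(y)$ as a single element of the orthogonal group $O(n)$ and to find a uniform bound on how long one must wait before some power of it returns near the identity. Fix $\varepsilon > 0$. The orthogonal group $O(n)$ is compact, so it can be covered by finitely many balls of radius $\varepsilon/2$ (in the operator norm); let $m_0$ be the number of such balls. This $m_0$ depends only on $\varepsilon$ and $n$, as required. Now fix $y \in \Xi$ and an integer $\ell \geq 1$, and write $Q = R(y)^\ell \in O(n)$. Consider the $m_0 + 1$ elements $Q^0 = I, Q^1, Q^2, \dots, Q^{m_0}$. By the pigeonhole principle two of them, say $Q^i$ and $Q^j$ with $0 \leq i < j \leq m_0$, lie in the same ball of radius $\varepsilon/2$, so $\|Q^i - Q^j\| \leq \varepsilon$.

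To conclude I would multiply on the left by $(Q^i)^{-1} = (Q^{-1})^i$, which is an isometry and hence preserves the operator norm: $\|I - Q^{j-i}\| = \|(Q^{-1})^i(Q^i - Q^j)\| = \|Q^i - Q^j\| \leq \varepsilon$. Setting $m(y) = j - i$, we have $1 \leq m(y) \leq m_0$ and $\|R(y)^{m(y)\ell} - I\| = \|Q^{m(y)} - I\| \leq \varepsilon$, which is exactly \eqref{4.4}. Note that the estimate is completely uniform in $y$ and $\ell$ because only the group-theoretic structure of $O(n)$ and the compactness covering number were used; the specific rotation $R(y)$ and the exponent $\ell$ entered only through the fixed element $Q \in O(n)$.

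There is essentially no obstacle here: the only mild point is to make sure the bound $m_0$ genuinely depends on nothing but $\varepsilon$ and $n$, which is immediate from compactness of $O(n) \subset \R^{n^2}$ and the fact that the covering number of a fixed compact set at scale $\varepsilon/2$ is finite. One could equally phrase the argument via total boundedness of $O(n)$ or via the fact that the closure of $\{Q^m : m \geq 0\}$ is a compact abelian subgroup of $O(n)$, but the pigeonhole version above is the cleanest and gives the explicit uniform bound $m_0$ directly.
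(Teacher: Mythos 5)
Your proof is correct and is essentially the paper's argument: both use compactness of the orthogonal group, a pigeonhole count to find two powers $R(y)^{i\ell}, R(y)^{j\ell}$ within $\varepsilon$ of each other, and then multiply by the inverse isometry (which preserves operator norm) to conclude $\|R(y)^{(j-i)\ell}-I\|\le\varepsilon$. You spell out the covering-number bound for $m_0$ a bit more explicitly than the paper does, but the route is the same.
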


\ms
\begin{proof}  
Here $I$ is the identity mapping. We use the compactness of the group of 
linear isometries of $\R^n$ to choose $m_0$ large enough so that if $R_1, \ldots, R_{m_0}$ 
are linear isometries, we can find integers $m_1, m_2$
such that $1 \leq m_1 < m_2 < m_0$ and $|| R_{m_2} - R_{m_1}|| \leq \varepsilon$.
We apply this with $R_m = R(y)^{m \ell}$ to find $m_1$ and $m_2$ such that
$|| R(y)^{m_2 \ell} - R(y)^{m_1 \ell}|| \leq \varepsilon$. Then 
$|| R(y)^{(m_2-m_1)\ell} - I|| \leq \varepsilon$, as needed.
\end{proof}

\ms
Next we study elementary properties of $\Xi$. Notice that by \eqref{4.2},
$H(y)(\Xi) = \Xi$, and iterations also yield
\begin{equation} \label{4.5}
H(y)^m(\Xi) = \Xi \ \text{ for } m \geq 1.
\end{equation}

\begin{lemma} \label{t4.2}
The set $\Xi$ is convex.
\end{lemma}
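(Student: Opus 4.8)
Since $\Xi$ is closed (being the support of a Radon measure), it suffices to show that $[y,z]\subset\Xi$ whenever $y,z\in\Xi$. Fix such $y\neq z$, put $p_t=(1-t)y+tz$, and let $S=\{t\in[0,1]:p_t\in\Xi\}$, a closed subset of $[0,1]$ containing $0$ and $1$; assume for contradiction that $S\neq[0,1]$ and pick a connected component $(\alpha,\beta)$ of $[0,1]\setminus S$, so that $p_\alpha,p_\beta\in\Xi$ while $p_t\notin\Xi$ for $\alpha<t<\beta$. The mechanism I would use is the following consequence of Lemma~\ref{t3.1}: for each $w\in\Xi$ and each $k\geq1$, the map $H(w)^k$ belongs to $\cG$, fixes $w$, has ratio $\lambda(H(w)^k)=\lambda(w)^k\in(0,1)$ with $\lambda(w)\in[\lambda_2^{-1},\lambda_1^{-1}]$, and satisfies $H(w)^k(\Xi)=\Xi$ by \eqref{4.5}. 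Since a similarity of ratio $\lambda(w)^k$ that maps $\Xi$ onto $\Xi$ rescales the distance to $\Xi$ by that factor, one gets the identity $\dist(H(w)^k(u),\Xi)=\lambda(w)^k\,\dist(u,\Xi)$ for every $u\in\R^n$.

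To see the argument in its cleanest form, suppose momentarily that each $H(w)$ were an honest homothety, i.e. $R(w)=I$, so $H(w)^k(u)=w+\lambda(w)^k(u-w)$. Applying $H(p_\alpha)^k$ (which preserves $\Xi$) to the point $p_\beta\in\Xi$ gives $H(p_\alpha)^k(p_\beta)=p_{\alpha+\lambda(p_\alpha)^k(\beta-\alpha)}\in\Xi$; since $0<\lambda(p_\alpha)^k<1$, the parameter $\alpha+\lambda(p_\alpha)^k(\beta-\alpha)$ lies strictly inside $(\alpha,\beta)$, so $S$ meets $(\alpha,\beta)$, contradicting the choice of the component; hence $S=[0,1]$ and $\Xi$ is convex. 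The main difficulty is that $H(w)$ genuinely rotates: its linear part is $\lambda(w)R(w)$ with $R(w)$ an isometry, so $H(w)^k$ differs from the homothety with the same centre and ratio by $R(w)^k$, and Lemma~\ref{t4.1} only guarantees $\|R(w)^k-I\|\leq\varepsilon$ at the cost of a power $k\leq m_0(\varepsilon)$ (or, more flexibly, $k=m(w)\ell$ with $m(w)\leq m_0(\varepsilon)$ and $\ell$ as large as we wish). With such a $k$, the point $H(p_\alpha)^k(p_\beta)\in\Xi$ lies within $\lambda(p_\alpha)^k\varepsilon\,|p_\beta-p_\alpha|\leq\varepsilon(\beta-\alpha)|z-y|$ of the segment point $p_\tau$, $\tau=\alpha+\lambda(p_\alpha)^k(\beta-\alpha)$; because $\lambda(p_\alpha)^k\in[\lambda_2^{-k},\lambda_1^{-1}]$, this $\tau$ sits a definite fraction of $\beta-\alpha$ away from $\beta$, and, taking $k\leq m_0(\varepsilon)$, also a definite fraction away from $\alpha$. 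So the homothety argument, suitably quantified, delivers only a point of $\Xi$ at distance $\leq\varepsilon(\beta-\alpha)|z-y|$ from a segment point $p_\tau$ with $\tau$ strictly interior to $(\alpha,\beta)$.

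The plan is then to bootstrap this. Iterating the construction with $H$ centred both at the endpoints $p_\alpha,p_\beta$ and at the newly produced near-segment points of $\Xi$, and using the scaling identity $\dist(H(w)^k(u),\Xi)=\lambda(w)^k\dist(u,\Xi)$ to keep the distances to $\Xi$ under control together with $\ell\to\infty$ in Lemma~\ref{t4.1} to realize contraction ratios arbitrarily close to $1$, one aims to show that some fixed interior point of $(\alpha,\beta)$ is a limit of points of $\Xi$, hence lies in the closed set $\Xi$ — i.e. $S\cap(\alpha,\beta)\neq\varnothing$, the desired contradiction. I expect the technical heart of the proof to be exactly this bootstrap: controlling the transverse (off-segment) error uniformly while keeping the parameters one can reach bounded away from both endpoints of the gap. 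Once $\Xi$ is known to be convex it is precisely in this form — $\Xi$ convex, with $\Xi-y$ invariant under the linear similarity $\lambda(y)R(y)$ (and its powers) for each $y\in\Xi$, via \eqref{4.3} and \eqref{4.5} — that the remaining arguments of Section~\ref{S4} will use it.
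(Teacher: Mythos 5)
Your proposal correctly isolates the ingredients the proof must use (Lemma~\ref{t4.1}, the invariance $H(w)^m(\Xi)=\Xi$, the fact that $H(w)^{m(w)\ell}$ is a contraction with nearly trivial rotation centred at $w$), and your warm-up computation in the homothety case is right. But you stop exactly where the difficulty is: you declare that ``the plan is to bootstrap'' and that you ``expect the technical heart of the proof to be exactly this bootstrap,'' without carrying it out, and as stated the bootstrap does not obviously close. Concretely, one application of $H(p_\alpha)^{m(p_\alpha)\ell}$ to $p_\beta$ gives a point of $\Xi$ within $\varepsilon\,\lambda(p_\alpha)^{m(p_\alpha)\ell}(\beta-\alpha)|z-y|$ of $p_\tau$ with $\tau=\alpha+\lambda(p_\alpha)^{m(p_\alpha)\ell}(\beta-\alpha)$. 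The power $m(p_\alpha)$ is not under your control beyond $m(p_\alpha)\le m_0(\varepsilon)$, and $m_0(\varepsilon)\to\infty$ as $\varepsilon\to0$; so the achievable $\tau$ can drift toward $\alpha$ as you shrink $\varepsilon$, and the argument ``a \emph{fixed} interior point of $(\alpha,\beta)$ is a limit of points of $\Xi$'' is not established. Moreover, once you want to iterate, the new near-segment point of $\Xi$ is not on the segment, so the ``gap'' structure you set up is destroyed after a single step, and you have not provided the estimate that controls the cumulative transverse error over many iterations.

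The paper avoids both problems by a direct construction rather than a contradiction via gaps. Starting at $y_0=y$, it defines $y_{k+1}=H(y_k)^{m(y_k)\ell}(x)\in\Xi$, with $x$ always the \emph{endpoint} (not the previous iterate), and compares $y_{k+1}$ to the on-segment point $y_{k+1}^\ast=y_k+\lambda'_k(x-y_k)$. The one-step transverse error is $|y_{k+1}-y_{k+1}^\ast|\le\varepsilon\lambda'_kr_k$, which is then majorised by $2\varepsilon(r_k-r_{k+1})$; summing this telescoping bound gives the uniform estimate $\dist(y_{k+1},[x,y])\le 2\varepsilon|x-y|$, independent of $k$. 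Combined with $|y_{k+1}-y_k|\le2\lambda_1^{-\ell}|x-y|$, this shows every point of $[x,y]$ is within $2(\varepsilon+\lambda_1^{-\ell})|x-y|$ of $\Xi$; letting $\varepsilon\to0$ and $\ell\to\infty$ and using that $\Xi$ is closed gives $[x,y]\subset\Xi$. The telescoping in $r_k-r_{k+1}$ is precisely the mechanism your sketch is missing: it converts the per-step error $\varepsilon\lambda'_kr_k$, which could a priori accumulate over infinitely many steps, into a sum bounded by $2\varepsilon|x-y|$. If you want to salvage your contradiction framing, you would need to import this telescoping estimate into your bootstrap; but at that point you have essentially reproduced the paper's construction.
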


\ms
\begin{proof}  
Let $x, y \in \Xi$ be given. 
Our goal is to show that the segment $[x,y]$ is contained in $\Xi$. 
For each $\varepsilon > 0$ and $\ell \geq 1$, we construct a sequence $\{ y_k \}$ in $\Xi$ 
(depending on $\varepsilon$ and $\ell$) which will allow us to estimate how far $[x,y]$ is from $\Xi$.
We start with $y_0 = y$. 
If $k \geq 0$ and $y_k \in \Xi$ has been defined, we define $y_{k+1}$ as follows.
Set
\begin{equation} \label{4.6}
H_k = H(y_k)^{m(y_k) \ell} \ \text{ and } \ y_{k+1} = H_k(x).
\end{equation}
By \eqref{4.5} and since $x\in\Xi$, $y_{k+1} \in \Xi$.
Let us show that for $\ell$ large and $\varepsilon$ small, the $y_k$
stay close to the segment $[x,y]$ and converge (slowly) to $x$. First
observe that by iterations of \eqref{4.3}, $H_k$ is given by
\begin{equation} \label{4.7}
H_k(u) = y_k + \lambda'_k R_k(u-y_k)
\ \text{ for } u\in \R^n,
\end{equation}
with
\begin{equation} \label{4.8}
\lambda'_k = \lambda(H(y_k))^{m(y_k) \ell}
\in [\lambda_2^{-m_0 \ell},\lambda_1^{-\ell}]\hbox{  and  hence }\lambda'_k<1.
\end{equation}
In addition, $R_k = R(y_k)^{m(y_k) \ell}$ and therefore, by \eqref{4.4},
$||R_k -I || \leq \varepsilon$.

Set $r_k = |x-y_k|$ and 
$y_{k+1}^\ast = y_k + \lambda'_k (x-y_k)$. Notice that by \eqref{4.7} and \eqref{4.4},
\begin{eqnarray} \label{4.9}
|y_{k+1}-y_{k+1}^\ast| 
&=& \big|[y_k+\lambda'_k R_k (x-y_k)] - [y_k+\lambda'_k(x-y_k)] \big|
\nonumber\\
&=& \big|\lambda'_k [R_k - I](x-y_k)\big|
\leq \varepsilon \lambda'_k r_k.
\end{eqnarray}
Since
\begin{equation} \label{4.10}
|y_{k+1}^\ast-x| = (1-\lambda'_k) |y_{k}-x| = (1-\lambda'_k) r_k,
\end{equation}
we also get that if $\varepsilon < 1/2$,
\begin{eqnarray} \label{4.11}
r_{k+1} &=& |y_{k+1}-x| \leq |y_{k+1}^\ast-x|+|y_{k+1}-y_{k+1}^\ast|
\nonumber\\
&\leq& (1-\lambda'_k) r_k + \varepsilon \lambda'_k r_k
\leq (1-\lambda'_k/2) r_k.
\end{eqnarray}
Then since  $y_{k+1}^\ast \in [x,y_k]$, and by \eqref{4.9} and \eqref{4.11} we have
\begin{equation} \label{4.12}
\dist(y_{k+1},[x,y_k]) \leq |y_{k+1}-y_{k+1}^\ast|
\leq \varepsilon \lambda'_k r_k \leq 2 \varepsilon (r_k - r_{k+1}).
\end{equation}
By elementary geometry, 
\begin{equation} \label{4.13}
\dist(y_{k+1},[x,y]) \leq \dist(y_{k+1},[x,y_k])+\dist(y_{k},[x,y]).
\end{equation}
An iteration of \eqref{4.12} combined with \eqref{4.13} yields
\begin{equation} \label{4.14}
\dist(y_{k+1},[x,y]) \leq 2 \varepsilon \sum_{0 \leq j \leq k} (r_j - r_{j+1})
\leq 2 \varepsilon r_0 = 2 \varepsilon |x-y|.
\end{equation}
Notice that  by \eqref{4.11} and \eqref{4.8} $r_k$ tends to $0$. Thus we have constructed a sequence $\{ y_k \}$
in $\Xi$, which goes from $y=y_0$ to $x = \lim_{k \to \infty} y_k$.
The points $y_k$ lie within $2 \varepsilon |x-y|$ from $[x,y]$. 
Using the  definition of $y_{k+1}^\ast$, \eqref{4.9}, and \eqref{4.8} we can estimate 
their successive distances
\begin{eqnarray} \label{4.15}
|y_{k+1}-y_{k}| &\leq& |y_{k+1}-y_{k+1}^\ast| + |y_{k+1}^\ast-y_{k}| 
\leq \varepsilon \lambda'_k r_k + \lambda'_k r_k
\nonumber\\
&\leq& 2 \lambda'_k r_k \leq 2 \lambda_1^{-\ell} |x-y|.
\end{eqnarray}
Let $z_k$ be the orthogonal projection of $y_k$ into the line containing $x$ and $y$. 
Note that by \eqref{4.14} 
$z_k\in \big[x-2\varepsilon|x-y|\frac{y-x}{|x-y|}, y+2\varepsilon|x-y|\frac{y-x}{|x-y|}\big]$. 
By \eqref{4.15}, $|z_{k+1}-z_k|\le  2 \lambda_1^{-\ell} |x-y|$ and by the 
definition of $z_k$, $|z_k-y_k|\le 2 \varepsilon |x-y|$. Therefore 
every point of $[x,y]$ lies within
$2 (\varepsilon + \lambda_1^{-\ell}) |x-y|$ of some $y_k$, that is each  point of $[x,y]$ is at most $2 (\varepsilon + \lambda_1^{-\ell}) |x-y|$  away from $\Xi$.
Choosing
$\varepsilon$ arbitrarly small and $\ell$ arbitrarly large, we get
that $[x,y] \subset \Xi$. Lemma~\ref{4.2} follows.
\end{proof}

\ms
\begin{lemma} \label{t4.3}
The set $\Xi$ is a vector subspace of $\R^n$.
\end{lemma}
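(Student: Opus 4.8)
The plan is to upgrade Lemma~\ref{t4.2} (convexity of $\Xi$) to the statement that $\Xi$ is a linear subspace, using the scaling invariances \eqref{4.2}--\eqref{4.5} to show $\Xi$ is a cone through the origin, and then combining with convexity. First I would prove that $0 \in \Xi$: pick any $y \in \Xi$, use the transformation $H = H(y)$ with $\lambda(H)^{-1} \in [\lambda_1,\lambda_2]$ and $H(y) = y$, so that $H^m(u) = y + \lambda(H)^m R(y)^m (u - y)$. Applying $H^m$ to any fixed point $u_0 \in \Xi$ and letting $m \to \infty$, we have $\lambda(H)^m \to 0$ (since $\lambda(H) < 1$) and $R(y)^m$ stays bounded, so $H^m(u_0) \to y$... that only re-proves $y$ is a limit point. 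Instead, to get the origin I would use convexity together with a dilation argument: having shown $0 \in \Xi$ (see below), convexity gives that $\Xi$ contains the segment from $0$ to any $y \in \Xi$, and the cone property will give the full line.

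The cleanest route to $0 \in \Xi$: fix $y \in \Xi$ and consider $H(y)^{-1}$, whose linear part has dilation factor $\lambda(y)^{-1} \in [\lambda_1,\lambda_2] \subset (1,\infty)$; since $H(y)(\Xi) = \Xi$ we also have $H(y)^{-1}(\Xi) = \Xi$, and iterating, $H(y)^{-m}(u_0) = y + \lambda(y)^{-m} R(y)^{-m}(u_0 - y)$ for any $u_0 \in \Xi$. If $u_0 \neq y$ we can choose $u_0$ (e.g. any other point of $\Xi$, which exists unless $\Xi$ is a single point, a trivial case) so that $H(y)^{-m}(u_0)$ runs off to infinity in roughly the direction $R(y)^{-m}(u_0-y)$. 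This doesn't immediately land on $0$ either. So the honest approach is: by Lemma~\ref{t4.2}, $\Xi$ is convex, hence $\Xi$ is contained in an affine subspace $W$, and I claim $\Xi = W$. Indeed, for $y \in \Xi$, $H(y)^{-m}$ is an affine map fixing $y$ with arbitrarily large expansion factor, mapping $\Xi$ into $\Xi$; applying it to a small segment in $\Xi$ around $y$ (which exists by convexity if $\Xi$ is not a point) produces arbitrarily long segments in $\Xi$ through $y$, and since $\Xi$ is convex and closed, in the limit $\Xi$ contains a whole line through $y$ in each tangent direction of $\Xi$ at $y$. Combined with convexity this forces $\Xi$ to be all of the affine subspace $W$ it spans. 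Finally, to see $W$ passes through $0$: apply $H(y)$, which fixes $y$ and has contraction factor bounded by $\lambda_1^{-1} < 1$, repeatedly to any point $z \in W = \Xi$; the images stay in $\Xi$ and converge to $y$, which is fine, but applying $H(y)$ to the whole subspace $W$ gives an affine subspace $H(y)(W) \subseteq \Xi = W$ of the same dimension, hence $H(y)(W) = W$; iterating, $H(y)^m(W) = W$ for all $m$, and since $H(y)^m$ contracts toward $y$, the only way $H(y)^m(W) = W$ for all $m$ is if $W$ is a subspace through the fixed point $y$... that gives $W$ through $y$, not through $0$.

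The resolution of this last point is the step I expect to be the main obstacle, and here is how I would handle it cleanly. I would show $\Xi$ is a \emph{cone with vertex} $0$, i.e. $0 \in \Xi$ and $tv \in \Xi$ for $v \in \Xi$, $t \geq 0$, by a different use of the self-similarity: for $y \in \Xi$ and $v \in \Xi$, the point $H(y)(v) = y + \lambda(y) R(y)(v - y) \in \Xi$; using that we may also iterate and that by Lemma~\ref{t4.1} suitable powers of $R(y)$ are close to the identity, we can (as in the proof of Lemma~\ref{t4.2}) find points of $\Xi$ arbitrarily close to $y + t(v-y)$ for every $t \in [0,1]$, which is convexity again — so this alone is not enough. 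The extra ingredient is that $\lambda_1^{-\ell}$ can be taken as small as we like while the \emph{expansive} inverse maps let us push out: concretely, I would argue that $\Xi$, being convex, closed, and invariant under an affine contraction $H(y)$ fixing $y$ with ratio $< 1$ together with its expansive inverse, must be a linear subspace once we show it contains $0$. To show $0 \in \Xi$, note $\bigcap_{m \geq 1} H(y)^m(\R^n) = \{y\}$ but $\bigcap_m H(y)^m(\Xi)$: since $H(y)(\Xi) = \Xi$, we get $H(y)^m(\Xi) = \Xi$ for all $m$, so this intersection is $\Xi$, not $\{y\}$ — consistent. The actual proof the authors surely intend: take \emph{two} distinct points $y, y' \in \Xi$ (if $\Xi$ is a single point it is $\{y\}$ and one shows $y = 0$ by applying $H(y)$ — wait, $H(y)$ fixes $y$, so that fails too, meaning a single-point $\Xi$ cannot satisfy everything unless we rule it out). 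So I would first rule out $\Xi$ being a single point (a tangent measure to a doubling measure cannot be a point mass, by the doubling property controlling $\sigma(2\bB) \leq C_\delta^2 \sigma(\bB)$), then use convexity plus the expanding maps $H(y)^{-m}$ to show $\Xi$ is an affine subspace $W$, and finally observe that since $\Xi = \bigcap$ is invariant under all the $H(y)$, $y \in W$, and these include maps with linear part $\lambda R$ for various rotations $R$ and a fixed range of $\lambda$, averaging or a direct dimension count forces the common affine subspace to be linear, i.e. $0 \in W$; then $W = \Xi$ is a vector subspace, proving Lemma~\ref{t4.3}.
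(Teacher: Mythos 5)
Your proposal correctly identifies the main structure and gets roughly halfway: by convexity (Lemma~\ref{t4.2}) $\Xi$ lies in some affine subspace $W$, and using the expanding maps $H(y)^{-m}$ one shows $\Xi = W$. This matches the paper's approach (though the paper does it cleanly by taking a small relatively open ball $V \cap B(y,r) \subset \Xi$ — which exists by convexity around the centroid of $d+1$ affinely independent points — and applying $H(y)^{-m}$ to it, after checking $H(y)^{-m}(V) = V$ by a dimension count; your version with ``segments in each tangent direction'' needs care because $H(y)^{-m}$ rotates directions by $R(y)^{-m}$).

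The genuine gap is exactly the step you flag as ``the main obstacle'': showing $0 \in \Xi$. You never resolve it. The self-similarity relations \eqref{4.2}--\eqref{4.5} \emph{alone} cannot force $W$ through the origin, since a translated subspace such as $\{x : x_n = 1\}$ is invariant under every contraction $H(y)(u) = y + \lambda R(u-y)$ with $y$ in it and $R$ fixing the subspace's direction — so no amount of ``averaging or dimension count'' over the family $\{H(y)\}_{y \in W}$ will rule this out. The missing ingredient, which the paper cites as Remark~14.4(2) in Mattila, is a basic property of tangent measures: if $\sigma \in \Tan(\mu,x_0)$ for a doubling $\mu$ with $x_0 \in \Sigma$, then $0 \in \supp\sigma = \Xi$. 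That single external fact is what converts ``affine subspace'' to ``vector subspace,'' and it also handles the $d=0$ case without further work. Separately, your claim that a tangent measure to a doubling measure cannot be a point mass is false (Dirac masses at $0$ are doubling and \emph{do} arise as tangent measures at isolated atoms of $\Sigma$; see the discussion of $\cS_0$ in Section~\ref{S7}); fortunately the lemma doesn't need to exclude this case, since $\{0\}$ is a vector subspace.
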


\begin{proof}  
Let $V$ be the smallest affine subspace of $\R^n$ that contains
$\Xi$, and let $d$ denote its dimension. Choose $d+1$ affinely
independent points $y_0, \ldots y_{d}$ in $V$
(this means that the vectors $y_j-y_0$, $j \geq 1$, are linearly independent),
and set $y = {1 \over d+1} \sum_{j=0}^d y_j$. 
By Lemma~\ref{t4.2}, $\Xi$ contains the convex hull of the $y_j$,
so there is a small radius $r > 0$ such that $V \cap B(y,r) \subset \Xi$.

Recall from \eqref{4.5} that $H(y)^m(\Xi) = \Xi$ for $m \geq 1$.
Applying the bijection $H(y)^{-m}$ to both sides, we see that
$H(y)^{-m}(\Xi) = \Xi$ for $m \geq 1$, so 
\begin{equation} \label{4.16}
H(y)^{-m}(V \cap B(y,r)) \subset \Xi.
\end{equation}
We know that $H(y)^{-m}(V \cap B(y,r))$ is a nontrivial open subset
of the affine space $H(y)^{-m}(V)$, and since $\Xi \subset V$ we get that
$H(y)^{-m}(V) \subset V$. Then by a dimension count $H(y)^{-m}(V) = V$,
and by the description \eqref{4.3} of $H(y)$, we see that
\begin{equation} \label{4.17}
H(y)^{-m}(V \cap B(y,r)) = V \cap B(y, \lambda(H(y))^{-m} r).
\end{equation}
Recall that $\lambda(H(y)) < 1$; then $\lambda(H(y))^{-m} r$ is as large as we want by picking $m$ as large as we need.
Hence \eqref{4.16} guarantees that $\Xi$ contains $V$, and this means that $\Xi = V$.

Note that Remark 14.4 (2) in \cite{Mattila} ensures that $0\in\Xi$, 
as $\Xi$ is the support of $\sigma$ and $\sigma\in\Tan(\mu,x_0)$ where $\mu$ is a doubling 
measure and $x_0\in \Sigma$. Hence $\Xi = V$ is a vector space.
This completes our proof of Lemma \ref{t4.3}.
\end{proof}

\ms
Now we study the distribution of $\sigma$ on $\Xi$.
If $\Xi$ is reduced to the origin, then $\sigma$ is a Dirac mass, and
Dirac masses lie in $\cF_0$. Thus in this case $\sigma$ is trivially flat.
We may now assume that $\Xi$ is a vector
space of dimension $d > 0$.

\begin{lemma} \label{t4.4}
There is a dimension $D \geq 0$ such that $\sigma$ is Ahlfors-regular of dimension $D$,
which means that
\begin{equation} \label{4.19}
C^{-1} \rho^D \leq \sigma(B(y,\rho)) \leq C\rho^D
\ \text{ for $y\in \Xi$ and $\rho > 0$}
\end{equation}
for some constant $C \geq 1$.
\end{lemma}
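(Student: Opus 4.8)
The plan is to promote the self-similarity \eqref{4.2} to an exact radial scaling law for $\sigma$ at every point of $\Xi$, and then to use the doubling of $\sigma$ to glue these pointwise laws into a single global power law. Fix $y\in\Xi$ and put $L(y)=\lambda(y)^{-1}$, so that $L(y)\in[\lambda_1,\lambda_2]$ by \eqref{4.1}; in particular $L(y)>1$. Since $H(y)$ fixes $y$ and its linear part is $\lambda(y)R(y)$ with $R(y)$ an isometry (see \eqref{4.3}), the inverse $H(y)^{-1}$ carries $B(y,\rho)$ onto $B(y,L(y)\rho)$; taking $A=B(y,\rho)$ in \eqref{4.2} therefore gives
\[
\sigma(B(y,L(y)\rho))=c(y)\,\sigma(B(y,\rho))\qquad\text{for all }\rho>0 ,
\]
and hence $\sigma(B(y,L(y)^m\rho))=c(y)^m\sigma(B(y,\rho))$ for every $m\in\bZ$ (using $c(y)>0$ for negative $m$). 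Because $\sigma$ is doubling with constant at most $C_\delta^2$ and $1<\lambda_1\le L(y)\le\lambda_2$, comparing the two sides of the displayed identity forces $1\le c(y)\le C_1$, where $C_1$ depends only on $C_\delta$ and $\lambda_2$.

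Next I would extract the power law at each point. Set $D(y)=\log c(y)/\log L(y)$, so that $0\le D(y)\le D_{\max}:=\log C_1/\log\lambda_1$. For an arbitrary $\rho>0$, write $\rho=L(y)^m L(y)^s$ with $m\in\bZ$ and $s\in[0,1)$; monotonicity of $\sigma$ together with the iterated identity, after an elementary estimate of $c(y)^m$ in terms of $\rho^{D(y)}$, yields
\[
C_1^{-1}\,\sigma(B(y,1))\,\rho^{D(y)}\ \le\ \sigma(B(y,\rho))\ \le\ C_1\,\sigma(B(y,1))\,\rho^{D(y)}\qquad(\rho>0).
\]

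Then I would de-localize. Since $\sigma$ is a nonzero Radon measure and $0\in\Xi$ (as recorded in the proof of Lemma~\ref{t4.3}), $\sigma(B(0,1))\in(0,\infty)$. Given $y\in\Xi$ with $y\neq 0$ and $t=|y|$, one has $B(y,\rho)\subset B(0,2\rho)$ for every $\rho\ge t$; applying the power law at $y$ and at $0$ gives $\rho^{\,D(y)-D(0)}\le C_1^2\,2^{D(0)}\,\sigma(B(0,1))/\sigma(B(y,1))$ for all $\rho\ge t$, and letting $\rho\to\infty$ forces $D(y)\le D(0)$. The symmetric inclusion gives the reverse inequality, so $D(y)=D(0)=:D$ for all $y\in\Xi$. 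Feeding $D(y)=D$ back into these two estimates, the factor $\rho^{D}$ now cancels and shows that $\sigma(B(y,1))$ is comparable to $\sigma(B(0,1))$ with a constant $\le C_1^2\,2^{D}$ that does not depend on $y$. Combining this with the power law of the previous paragraph yields \eqref{4.19}, with $D\ge0$ and $C$ a suitable multiple of $C_1^{3}\,2^{D}\,\max\{\sigma(B(0,1)),\sigma(B(0,1))^{-1}\}$; the case $\Xi=\{0\}$ is trivial with $D=0$, so the standing assumption $d\ge1$ is not actually needed here.

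The whole argument is bookkeeping with the scaling identity and the doubling constant; the one point that genuinely requires care is the de-localization step. There one must check that the constant comparing $\sigma(B(y,1))$ and $\sigma(B(0,1))$ is really independent of $|y|$ — which is exactly why the common value of the exponents $D(y)$ must be identified first, so that the powers of $|y|$ cancel between the two sides of the ball inclusions — and that all the bounds used in the passage $\rho\to\infty$ are uniform in $\rho$.
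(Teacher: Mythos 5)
Your proposal is correct and follows essentially the same route as the paper: iterate the scaling identity to get $\sigma(B(y,L(y)^m\rho))=c(y)^m\sigma(B(y,\rho))$, extract the exponent $D(y)=\log c(y)/\log L(y)$, show $D(y)$ is constant by comparing balls around two points as $\rho\to\infty$, and then conclude the two-sided power law. One small but worthwhile improvement on your part: you make explicit the ``de-localization'' step showing that $\sigma(B(y,1))$ is comparable to $\sigma(B(0,1))$ uniformly in $y\in\Xi$ (once the exponents match, the $\rho^D$ cancels in the ball inclusions), whereas the paper's final constant $C=\lambda_2^D\max(\sigma(B(y,1)),\sigma(B(y,1))^{-1})$ still has a visible $y$-dependence that the written proof never resolves.
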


\ms
\begin{proof} 
Set $\lambda(y) = \lambda(H(y))$, and 
recall from \eqref{4.1} that $1 < \lambda_1 \leq \lambda(y)^{-1} \leq \lambda_2$.
Notice that by \eqref{4.2}, for $y\in \Xi$ and $r > 0$
\begin{eqnarray} \label{4.20}
\sigma(B(y,\lambda(y)^{-1} r)) &=& 
\sigma(H(y)^{-1}(B(y,r))) 
= H(y)_\sharp \sigma(B(y,r)) 
\nonumber
\\
&=&  c(y) \sigma(B(y,r)).
\end{eqnarray}
Iterating we obtain for $m\ge 0$
\begin{equation} \label{4.21}
\sigma(B(y,\lambda(y)^{-m} r)) = c(y)^m \sigma(B(y,r)).
\end{equation}
Applying \eqref{4.21} to $\lambda(y)^{m} r$ instead of $r$
we have that \eqref{4.21} also holds for $m\le 0$.
Observe that since $\lambda(y) < 1$ and $\sigma(B(y,r)) > 0$ when $y\in \Xi$,
\eqref{4.21} yields $c(y) \geq 1$.
If $c(y) = 1$, then $\sigma(B(y,\lambda(y)^{-m} r))=\sigma(B(y,r))$
for all $m\in \bZ$, and $\sigma$ is a Dirac mass.
This case was excluded before the statement of the lemma, so $c(y) > 1$. 

Now let $\rho >0$ be given, and choose $m$ such that 
\begin{equation} \label{4.22}
\lambda(y)^{-m} \leq \rho \leq \lambda(y)^{-m-1}.
\end{equation}
By \eqref{4.21} applied to $r=1$,
\begin{equation} \label{4.23}
c(y)^m \sigma(B(y,1)) \leq \sigma(B(y,\rho)) \leq c(y)^{m+1} \sigma(B(y,1)),
\end{equation}
hence, letting $\ell = \log(\sigma(B(y,1)))$, \eqref{4.23} yields
\begin{equation}\label{4.23A} 
m \log(c(y)) + \ell \leq \log(\sigma(B(y,\rho)))  
\leq (m+1) \log(c(y)) + \ell.
\end{equation}
By \eqref{4.22}
\begin{equation}\label{4.22A}
m \log(\lambda(y)^{-1}) \leq \log\rho \leq (m+1) \log(\lambda(y)^{-1}).
\end{equation}
Hence combining \eqref{4.23A} and \eqref{4.22A} we have
\begin{equation} \label{4.24}
\lim_{\rho \to +\infty} {\log(\sigma(B(y,\rho))) \over \log\rho}
= {\log(c(y)) \over \log(\lambda(y)^{-1})}=: D(y).
\end{equation}
We
claim that $D(y)$ does not depend on $y$.
Indeed, if $z \in \Xi$, observe that $B(y,\rho) \subset B(z,\rho+|z-y|)$, hence
\begin{eqnarray} \label{4.25}
D(y) &=& \lim_{\rho \to +\infty} {\log(\sigma(B(y,\rho))) \over \log\rho}
\leq \liminf_{\rho \to +\infty} {\log(\sigma(B(z,\rho+|z-y|))) \over \log\rho}
\nonumber
\\
&=&  \liminf_{\rho \to +\infty} {\log(\sigma(B(z,\rho+|z-y|))) \over \log(\rho+|z-y|)}
= D(z);
\end{eqnarray}
the opposite inequality also holds exchanging the roles of $y$ and $z$.
Let $D$ be the common value of the $D(y)$ for $y\in \Xi$. 
By definition (see \eqref{4.24}), $\lambda(y)^{-D} = c(y)$, and  using \eqref{4.22}
we can rewrite \eqref{4.23} as
\begin{equation} \label{4.26}
\lambda(y)^{-m D} \sigma(B(y,1)) \leq \sigma(B(y,\rho)) 
\leq \lambda(y)^{-D} \lambda(y)^{-m D} \sigma(B(y,1)).
\end{equation}
Thus, by \eqref{4.22},
\begin{equation} \label{4.27}
\lambda(y)^{D} \rho^D \sigma(B(y,1)) \leq \sigma(B(y,\rho)) 
\leq \lambda(y)^{-D} \rho^{D} \sigma(B(y,1)),
\end{equation}
which yields \eqref{4.19} with 
$C = \lambda_2^D \max(\sigma(B(y,1)), \sigma(B(y,1))^{-1})$ 
(because \eqref{4.1} guarantees that $\lambda(y)^{-1} \leq \lambda_2$).
\end{proof}

\ms
\begin{lemma} \label{t4.5}
Let $d$ be the dimension of the vector space $\Xi$, and denote by
$\nu=\H^d\res\Xi$ the restriction of $\H^d$ to $\Xi$. Let $D$ be as in \eqref{4.19}.
Then $d=D$, and there exists a constant $c_0>0$ such that $\sigma=c_0 \nu$.
\end{lemma}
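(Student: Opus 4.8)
The plan is to identify the exponent $D$ with $d$ by elementary packing estimates, then represent $\sigma$ as a bounded density times $\nu$, and finally upgrade the pointwise self-similarity \eqref{4.2} of $\sigma$ to an invariance of that density under each $H(y)$, which forces it to be constant. \textbf{Step 1: $D=d$.} Since $\Xi$ is isometric to $\R^{d}$, for $0<\rho<R$ one can choose at least $c\,(R/\rho)^{d}$ pairwise disjoint balls $B(y_{i},\rho)$ with $y_{i}\in\Xi$ and $|y_{i}|\le 2R$, and also cover $\Xi\cap B(0,R)$ by at most $C\,(R/\rho)^{d}$ balls of radius $\rho$ centered on $\Xi$. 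Feeding the former family into the lower bound of \eqref{4.19} gives $\sigma(B(0,3R))\ge c'\,R^{d}\rho^{D-d}$, and feeding the latter into the upper bound gives $\sigma(B(0,R))\le C'\,R^{d}\rho^{D-d}$. Letting $\rho\to0$, the first estimate contradicts the upper bound in \eqref{4.19} if $D<d$, and the second contradicts the lower bound if $D>d$; hence $D=d$, and in particular $c(y)=\lambda(y)^{-D}=\lambda(y)^{-d}$ for all $y\in\Xi$ (see \eqref{4.24}).

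\textbf{Step 2: $\sigma=f\nu$ with $f$ bounded above and below.} The bound $\sigma(B(y,\rho))\le C\rho^{d}$ from \eqref{4.19} shows that $\sigma(A)=0$ whenever $A\subset\Xi$ has $\nu(A)=0$ (cover $A$ by balls of small total $\nu$-measure), so $\sigma\ll\nu$ and $\sigma=f\nu$ for some Borel $f\ge0$. The measure $\nu=\H^{d}\res\Xi$ is doubling, so $\nu$-a.e.\ $y$ is a Lebesgue point of $f$ and satisfies $f(y)=\lim_{\rho\to0}\sigma(B(y,\rho))/\nu(B(y,\rho))$; combined with \eqref{4.19} and the fact that $\nu(B(y,\rho))$ equals $\nu(B(0,1))\,\rho^{d}$, this yields $C_{1}^{-1}\le f\le C_{1}$ $\nu$-a.e., for some $C_{1}\ge1$. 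In particular $\nu\ll\sigma$ as well.

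\textbf{Steps 3 and 4: invariance of $f$, and conclusion.} Fix $z\in\Xi$. Since $H(z)$ fixes $z\in\Xi$ and maps $\Xi$ onto itself (by \eqref{4.2}), its restriction $H(z)|_{\Xi}$ is a similarity of $\Xi$ of ratio $\lambda(z)$, whence $H(z)_{\sharp}\nu=\lambda(z)^{-d}\nu=c(z)\nu$ by Step 1; on the other hand $H(z)_{\sharp}(f\nu)=(f\circ H(z)^{-1})\,H(z)_{\sharp}\nu=c(z)\,(f\circ H(z)^{-1})\,\nu$, while $H(z)_{\sharp}\sigma=c(z)\sigma=c(z)f\nu$ by \eqref{4.2}. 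Comparing, $f\circ H(z)^{-1}=f$ $\nu$-a.e., and iterating (using that $H(z)^{-1}$ preserves $\nu$-null sets) $f\circ H(z)^{m}=f$ off a single $\nu$-null set, for every $m\ge1$. Now fix a Lebesgue point $y$ of $f$ and write $H=H(y)$, $\lambda=\lambda(y)\in(0,1)$. Since $H^{m}$ shrinks distances to $y$ by the factor $\lambda^{m}$, we have $H^{m}(B(y,\rho))=B(y,\lambda^{m}\rho)$, and applying the change-of-variables formula on $\Xi$ to the similarity $H^{m}|_{\Xi}$ (which scales $\nu$ by $\lambda^{md}$), together with $f\circ H^{m}=f$ $\nu$-a.e., yields
\begin{equation*}
\fint_{B(y,\lambda^{m}\rho)}|f-f(y)|\,d\nu \;=\; \fint_{B(y,\rho)}|f-f(y)|\,d\nu\qquad\text{for all }m\ge1,\ \rho>0 .
\end{equation*}
Letting $m\to\infty$, the left-hand side tends to $0$ because $y$ is a Lebesgue point, so $\fint_{B(y,\rho)}|f-f(y)|\,d\nu=0$ for every $\rho$; hence $f\equiv f(y)=:c_{0}>0$ $\nu$-a.e., i.e.\ $\sigma=c_{0}\nu$. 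Together with Step 1 this gives $d=D$ and $\sigma=c_{0}\nu$.

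The routine ingredients are in Steps 1--2 (packing estimates, Radon--Nikodym, and Lebesgue differentiation for the doubling measure $\nu$). The crux is Step 4: one must exploit the scaling symmetry of $\sigma$ centered at a \emph{single} point $y$ to propagate the Lebesgue-point value of $f$ across all scales around $y$, and hence everywhere on $\Xi$. A small recurring technical point used throughout is that the isometric part $R(y)$ of $H(y)$ preserves the subspace $\Xi$ — which follows from $H(y)(\Xi)=\Xi$ together with $0,y\in\Xi$ — so that $H(y)$ genuinely restricts to a similarity of $\Xi$.
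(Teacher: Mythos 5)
Your proof is correct, and it follows the same overall skeleton as the paper (show $D=d$, write $\sigma=f\nu$, use self-similarity plus a Lebesgue-point argument to make $f$ constant), but the implementation of two of the three steps differs enough to be worth noting. For $D=d$, you give a self-contained packing/covering estimate, whereas the paper invokes a standard covering lemma (Lemma 18.11 in \cite{MSbook}) to go from the Ahlfors regularity of $\sigma$ to $\Xi$ being a $D$-regular set, with the same conclusion. For the constancy of $f$, the paper works entirely with the numerical identity \eqref{4.30} — the ratio $\sigma(B(y,\cdot))/\nu(B(y,\cdot))$ is invariant under the dilation scale $\lambda(y)$ — which avoids ever having to check that the isometric part $R(y)$ preserves $\Xi$; it then gets $\sigma(B(y,r))=f(y)\nu(B(y,r))$ for every $r>0$ at each Lebesgue point $y$, and compares two Lebesgue points $y,z$ by sending $r\to\infty$. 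You instead promote \eqref{4.2} to a genuine pushforward identity $H(z)_\sharp\nu=c(z)\nu$, for which you correctly observe that $R(z)(\Xi)=\Xi$ (using $H(z)(\Xi)=\Xi$ and $0,z\in\Xi$), deduce $f\circ H(z)^{-1}=f$ $\nu$-a.e., and then propagate the Lebesgue-point value $f(y)$ to all of $\Xi$ in one shot via the change of variables on averaged oscillation. Your route is a little heavier (it needs the geometric fact about $R(z)$ and a change-of-variables on $\Xi$) but slightly more direct at the end, since you never need to compare two Lebesgue points. Both arguments are sound.
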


\ms
\begin{proof}  
Since $\sigma$ is Ahlfors regular of dimension $D$ (by Lemma \ref{t4.4}),
a standard covering argument (see for instance Lemma 18.11 in \cite{MSbook})
guarantees that there exists a constant $C >0$ such that
\begin{equation} \label{4.28}
C^{-1} \H^D\res{\Xi} \leq \sigma \leq C \H^D\res{\Xi},
\end{equation}
and $\Xi$ is a $D$-dimensional Ahlfors regular set. Hence $D=d$.

By \eqref{4.28}, $\sigma$ is absolutely continuous with respect to $\nu$, 
and the Radon-Nikodym derivative of $\sigma$ with respect to $\nu$ is bounded.
Thus there is a bounded function $f$ on $\Xi$ such that $\sigma = f \nu$.

We now show that $f$ is constant. First observe that since 
$d = D = {\log(c(y)) \over \log(\lambda(y)^{-1})}$ (by \eqref{4.24}),
\eqref{4.21} guarantees that for $y\in \Xi$, $r > 0$, and $m \in \bZ$
\begin{equation} \label{4.29}
\sigma(B(y,\lambda(y)^{-m} r)) = c(y)^m \sigma(B(y,r))
= \lambda(y)^{- md} \sigma(B(y,r)).
\end{equation}
 Since 
$\nu(B(y,\lambda(y)^{-m} r)) = \lambda(y)^{- md} \nu(B(y,r))$,
we may rewrite \eqref{4.29} as
\begin{equation} \label{4.30}
{\sigma(B(y,\lambda(y)^{-m} r)) \over \nu(B(y,\lambda(y)^{-m} r))}
= {\sigma(B(y, r)) \over \nu(B(y, r))}.
\end{equation}

The Lebesgue differentiation theorem says that for $\nu$-almost every $y\in \Xi$,
\begin{equation} \label{4.31}
f(y) = \lim_{\rho \to 0} {\sigma(B(y,\rho)) \over \nu(B(y,\rho))}.
\end{equation}
For such an $y$ and every $r > 0$, by \eqref{4.31} and \eqref{4.30} for $-m$
\begin{equation} \label{4.32}
f(y) = \lim_{m \to \infty} 
{\sigma(B(y,\lambda(y)^{m} r)) \over \nu(B(y,\lambda(y)^{m}r))}
= {\sigma(B(y,r)) \over \nu(B(y,r))}.
\end{equation}
That is, 
\begin{equation} \label{4.33}
\sigma(B(y,r)) = f(y) \nu(B(y,r)) \ \text{ for } r > 0.
\end{equation}
If $z \in \Xi$ is another Lebesgue point of $f$, since  $B(z,r) \subset B(y,r+|y-z|)$ we have
\begin{eqnarray}  \label{4.34}
f(z) &=& \lim_{r \to \infty} {\sigma(B(z,r)) \over \nu(B(z,r))}
\leq \liminf_{r \to \infty} {\sigma(B(y,r+|y-z|)) \over \nu(B(z,r))}
\nonumber \\
&=& \liminf_{r \to \infty} {\sigma(B(y,r+|y-z|)) \over \nu(B(y,r+|y-z|))}
= f(y).
\end{eqnarray}
Similarly $f(y) \leq f(z)$, and $f$ is constant.
\end{proof}

This completes the proof of Theorem~\ref{t1.3}. 
In fact we have proved that if $\sigma\in \Tan(\mu, x_0)$ with $x_0\in\Sigma_0$ 
(see \eqref{1.24}) then $\sigma=c_0\H^d\res\Xi$ for some vector space $\Xi$.

\section{A smoother version of the Wasserstein $\W_1$ distance}
\label{S5}

So far we managed to work with the distance $\W_1$ defined by \eqref{1.5},
but for the proof of Theorem~\ref{t1.2}, it is more convenient to use
a slightly smoother variant, which attenuates the possible discontinuities in $r>0$ 
of the 
normalizing factors $\mu^{x,r}(B(0,1))^{-1} = \mu(B(x,r))^{-1}$.

Let us choose a smooth radial function $\varphi$ such that
\begin{equation} \label{5.1}
\1_{B(0,1/2)} \leq \varphi \leq \1_{B(0,1)};
\end{equation}
If $\mu$ and $\nu$ are two Radon measures such that
\begin{equation} \label{5.2}
\mu(B(0,1/2)) > 0 \ \text{ and } \ \nu(B(0,1/2)) > 0,
\end{equation}
we define a new distance $\W_{\varphi}(\mu,\nu)$ by
\begin{equation} \label{5.3}
\W_{\varphi}(\mu,\nu) = \sup_{\psi \in \Lip_1(\bB)}
\av{{\int\psi \varphi d\mu \over \int \varphi d\mu}
-{\int\psi \varphi d\nu \over \int \varphi d\nu}}.
\end{equation}
Recall that $\Lip_1(\bB)$ is defined near \eqref{1.3}. 
The distance $\W_1$ above essentially corresponds to $\varphi = \1_{\bB}$ here.

We required \eqref{5.2} (and \eqref{5.1}) to make sure that we do not divide
by $0$. But notice that even when $\mu(B(0,1/2))$ or $\nu(B(0,1/2))$
is very small, we always get that 
\begin{equation} \label{5.4}
\W_{\varphi}(\mu,\nu) \leq 2,
\end{equation}
because $|\int\psi \varphi d\mu| \leq \int \varphi d\mu$ and similarly
for $\nu$. Note that
\begin{equation} \label{5.5}
\W_{\varphi}(a\mu,b\nu) = \W_{\varphi}(\mu,\nu) \ \text{ for } a, b > 0,
\end{equation}
so we do not need to normalize $\mu$ and $\nu$ in advance.
Finally observe that $\W_{\varphi}$ satisfies the triangle inequality.
That is, if $\sigma$ is a third Radon measure such that $\sigma(B(0,1/2)) > 0$,
if follows at once from the definition that
\begin{equation} \label{5.6}
\W_{\varphi}(\mu,\sigma) \leq \W_{\varphi}(\mu,\nu) + \W_{\varphi}(\nu,\sigma).
\end{equation}

Let us check that if we restrict to measures that are not to small on $B(0,1/2)$, 
then $\W_1$ controls $\W_{\varphi}$.

\begin{lemma} \label{t5.1}
Let $\mu$ and $\nu$ be Radon measures such that \eqref{5.2} holds and
\begin{equation} \label{5.7}
\mu(\bB) = \nu(\bB) =1.
\end{equation}
Then
\begin{equation} \label{5.8}
\W_{\varphi}(\mu,\nu) 
\leq {1+ 2 ||\varphi||_{lip} \over \mu(B(0,1/2))} \, \W_1(\mu,\nu),
\end{equation}
where $||\varphi||_{lip}$ denotes the Lipschitz norm of $\varphi$.
\end{lemma}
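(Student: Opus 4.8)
The plan is to compare the two defining suprema directly, by expressing the $\W_\varphi$ quantity in terms of integrals against measures that $\W_1$ already controls. Fix $\psi \in \Lip_1(\bB)$. Since $\varphi$ is supported in $\bB$ and $\mu(\bB) = \nu(\bB) = 1$, the test function $\psi\varphi$ is Lipschitz and supported in $\bB$; the issue is only that it is not normalized, that it has a Lipschitz constant larger than $1$ (roughly $1 + \|\varphi\|_{lip}$), and that the $\W_\varphi$ expression involves the ratios $\int\psi\varphi\,d\mu / \int\varphi\,d\mu$ rather than the bare integral $\int\psi\varphi\,d\mu$. So the first step is to write
\begin{equation*}
{\int\psi\varphi\,d\mu \over \int\varphi\,d\mu} - {\int\psi\varphi\,d\nu \over \int\varphi\,d\nu}
= {\int\psi\varphi\,d\mu - \int\psi\varphi\,d\nu \over \int\varphi\,d\mu}
+ \int\psi\varphi\,d\nu\left({1\over\int\varphi\,d\mu} - {1\over\int\varphi\,d\nu}\right),
\end{equation*}
and then bound each of the two terms.

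**Bounding the two terms.** For the first term, $\psi\varphi$ is supported in $\bB$ with Lipschitz norm at most $\|\psi\|_\infty\|\varphi\|_{lip} + \|\psi\|_{lip}\|\varphi\|_\infty \le \|\varphi\|_{lip} + 1$ (using $|\psi|\le 1$ on $\bB$ since $\psi$ is $1$-Lipschitz and vanishes outside $\bB$, and $\varphi \le 1$), so dividing $\psi\varphi$ by $1 + \|\varphi\|_{lip}$ produces an element of $\Lip_1(\bB)$ and hence $\left|\int\psi\varphi\,d\mu - \int\psi\varphi\,d\nu\right| \le (1 + \|\varphi\|_{lip})\,\W_1(\mu,\nu)$; since also $\int\varphi\,d\mu \ge \mu(B(0,1/2))$ by \eqref{5.1}, this first term is at most $(1+\|\varphi\|_{lip})\W_1(\mu,\nu)/\mu(B(0,1/2))$. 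For the second term, note $|\int\psi\varphi\,d\nu| \le \int\varphi\,d\nu$, so the factor in front is at most $\int\varphi\,d\nu$, and
\begin{equation*}
\left|{1\over\int\varphi\,d\mu} - {1\over\int\varphi\,d\nu}\right|
= {\left|\int\varphi\,d\nu - \int\varphi\,d\mu\right| \over \int\varphi\,d\mu\,\int\varphi\,d\nu};
\end{equation*}
the numerator $|\int\varphi\,d\mu - \int\varphi\,d\nu|$ is again controlled by $\W_1$ — apply the $1$-Lipschitz bound to $\varphi/\|\varphi\|_{lip}$ (or note $\varphi$ itself is, up to the factor $\|\varphi\|_{lip}$, admissible) to get $\le \|\varphi\|_{lip}\W_1(\mu,\nu)$ — so the whole second term is at most $\|\varphi\|_{lip}\W_1(\mu,\nu)/\int\varphi\,d\mu \le \|\varphi\|_{lip}\W_1(\mu,\nu)/\mu(B(0,1/2))$.

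**Conclusion.** Adding the two bounds gives, for every $\psi \in \Lip_1(\bB)$,
\begin{equation*}
\left|{\int\psi\varphi\,d\mu\over\int\varphi\,d\mu} - {\int\psi\varphi\,d\nu\over\int\varphi\,d\nu}\right|
\le {1 + 2\|\varphi\|_{lip} \over \mu(B(0,1/2))}\,\W_1(\mu,\nu),
\end{equation*}
and taking the supremum over $\psi$ yields \eqref{5.8}. The only subtlety, and the one place to be careful, is the bookkeeping of Lipschitz constants: one must check that $\varphi$ does satisfy $\|\varphi\|_{lip} \ge$ (something $\ge 1$ is not guaranteed) so that dividing by $1 + \|\varphi\|_{lip}$ genuinely lands in $\Lip_1(\bB)$ — this is fine since the bound $\|\psi\varphi\|_{lip}\le 1 + \|\varphi\|_{lip}$ holds regardless, and the $\varphi$-only term is handled by observing $\varphi$ is $\|\varphi\|_{lip}$-Lipschitz and supported in $\bB$ so $\varphi/\|\varphi\|_{lip} \in \Lip_1(\bB)$ (if $\|\varphi\|_{lip} = 0$ then $\varphi$ is constant on $\bB$, the second term vanishes, and there is nothing to prove). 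No compactness or measure-theoretic machinery is needed; the whole argument is an elementary manipulation of the two suprema, so I anticipate no real obstacle beyond tracking constants.
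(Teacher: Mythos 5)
Your argument is correct and is essentially the paper's own proof: the decomposition
$\frac{a}{b}-\frac{c}{d}=\frac{a-c}{b}+c\bigl(\frac{1}{b}-\frac{1}{d}\bigr)$
is algebraically identical to the paper's rearrangement $\frac{ad-bc}{bd}=\frac{d(a-c)+c(d-b)}{bd}$, and the two bounds you use (applying the definition of $\W_1$ to $\psi\varphi$ and to $\varphi$, together with $|c|\leq d$ and $b\geq\mu(B(0,1/2))$) are exactly those in the paper.
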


\ms

The fact that the estimate is not symmetric is not an issue. 
In particular we shall apply \eqref{5.8} to doubling measures $\mu$ and $\nu$; in this 
case $\mu(B(0,1/2))\sim\mu(\bB)=1=\nu(\bB)\sim\nu(B(0,1/2))$.

\begin{proof}  
Let $\psi \in \Lip_1(\bB)$ be given. The definition \eqref{1.5}, applied to
$\psi \varphi$, yields
\begin{equation} \label{5.9}
\Big|\int \psi \varphi d\mu - \int \psi \varphi d\nu \Big|
\leq ||\psi \varphi||_{lip} \W_1(\mu,\nu)
\leq (1+||\varphi||_{lip}) \W_1(\mu,\nu).
\end{equation}
The same definition, applied to $\varphi$ itself, yields
\begin{equation} \label{5.10}
\Big|\int \varphi d\mu - \int \varphi d\nu \Big|
\leq ||\varphi||_{lip} \W_1(\mu,\nu).
\end{equation}
Set
\begin{equation} \label{5.11}
\Delta = \av{{\int\psi \varphi d\mu \over \int \varphi d\mu}
-{\int\psi \varphi d\nu \over \int \varphi d\nu}}
\end{equation}
and write
\begin{equation} \label{5.12}
\Delta = \av{ {a \over b} - {c \over d}} = \av{{ad-bc \over bd}}
= {| d(a-c)+c(d-b)| \over bd}
\end{equation}
with  $a = \int\psi \varphi d\mu$, $b= \int \varphi d\mu$, 
$c= \int\psi \varphi d\nu$, and $d=\int \varphi d\nu$.
Notice that $|c| \leq d$ because $\psi \in \Lip_1(\bB)$ and $\varphi \geq 0$.
Also, $b \geq \mu(B(0,1/2))$ by \eqref{5.1}. Hence by \eqref{5.9} and \eqref{5.10}; 
 we have
\begin{equation} \label{5.13}
\Delta \leq  {|a-c| + |d-b| \over b}
\leq {1+ 2 ||\varphi||_{lip} \over \mu(B(0,1/2))} \, \W_1(\mu,\nu).
\end{equation}
Taking the supremum over $\psi \in \Lip_1(\bB)$ in \eqref{5.13} ( recall \eqref{5.11})  yields \eqref{5.8}.
\end{proof}

The following lemma specifies the sense in which $\W_{\varphi}$ is more stable that $\W_1$.

\begin{lemma} \label{t5.2}
Let $\mu$ and $\nu$ be Radon measures and let $\theta\in (0, 1/2]$ be such that
\begin{equation} \label{5.14}
\mu(B(0,\theta/2)) > 0 \ \text{ and } \ \nu(B(0,\theta/2)) > 0,
\end{equation}
and define new measures $\mu_1$ and $\nu_1$ by
\begin{equation} \label{5.15}
\mu_1(A) = \mu(\theta A) \text{  and  } \nu_1(A) = \nu(\theta A) 
\ \text{ for } A \subset \R^n.
\end{equation}
Then
\begin{equation} \label{5.16}
\W_{\varphi}(\mu_1,\nu_1) 
\leq \theta^{-1}(1+4||\varphi||_{lip}) \, {\mu(B(0,1)) \over \mu(B(0,\theta/2))} \,
\W_{\varphi}(\mu,\nu).
\end{equation}
\end{lemma}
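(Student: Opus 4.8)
The statement to prove is Lemma~\ref{t5.2}, a scaling estimate for the smooth Wasserstein distance $\W_\varphi$. The plan is to reduce the inequality for $\mu_1,\nu_1$ to the inequality for $\mu,\nu$ by a change of variables inside the supremum defining $\W_\varphi$, keeping careful track of the normalizing integrals $\int\varphi\,d\mu_1$ and $\int\varphi\,d\mu$. First I would fix a competitor $\psi\in\Lip_1(\bB)$ for the supremum defining $\W_\varphi(\mu_1,\nu_1)$ and write out, using \eqref{5.15}, the numerators $\int\psi\varphi\,d\mu_1 = \int \psi(x/\theta)\varphi(x/\theta)\,d\mu(x)$ (and similarly for $\nu_1$) and the denominators $\int\varphi\,d\mu_1 = \int\varphi(x/\theta)\,d\mu(x)$. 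So the whole quantity being estimated becomes an expression built from the function $x\mapsto \psi(x/\theta)$ and the weight $x\mapsto\varphi(x/\theta)$, tested against $\mu$ and $\nu$.

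The obstacle is that this rescaled weight $\varphi(\cdot/\theta)$ is \emph{not} the fixed weight $\varphi$ appearing in the definition of $\W_\varphi(\mu,\nu)$, so one cannot directly invoke $\W_\varphi(\mu,\nu)$: the definition \eqref{5.3} uses one fixed $\varphi$. I would handle this exactly as in the proof of Lemma~\ref{t5.1}: expand the difference of two ratios as $\frac{a}{b}-\frac{c}{d} = \frac{d(a-c)+c(d-b)}{bd}$ with $a,b,c,d$ the four integrals above, use $|c|\le d$ (since $\psi\in\Lip_1(\bB)$, so $|\psi|\le 1$ on $\bB$ while $\varphi\ge0$ is supported in $\bB$) and the lower bound $b=\int\varphi(x/\theta)\,d\mu(x)\ge \mu(B(0,\theta/2))$ coming from $\varphi\ge\1_{B(0,1/2)}$, to get $\Delta \le \frac{|a-c|+|d-b|}{\mu(B(0,\theta/2))}$. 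Now each of $|a-c|$ and $|d-b|$ is a difference of integrals of a single function against $\mu$ and $\nu$, so I want to bound them by $\W_\varphi(\mu,\nu)$ times the relevant normalizing factor $\int\varphi\,d\mu$. Concretely, $\int f\,d\mu - \int f\,d\nu = \int\varphi\,d\mu\cdot\big(\frac{\int f\,d\mu}{\int\varphi\,d\mu} - \frac{\int\varphi\,d\mu}{\int\varphi\,d\nu}\cdot\frac{\int f\,d\nu}{\int\varphi\,d\mu}\big)$ — but cleaner is to write $f = (f/c_\mu)\,\varphi$-type decompositions; the simplest route is: by \eqref{5.3} applied to a suitable $1$-Lipschitz function, $\big|\frac{\int g\varphi\,d\mu}{\int\varphi\,d\mu} - \frac{\int g\varphi\,d\nu}{\int\varphi\,d\nu}\big| \le \|g\|_{lip}\,\W_\varphi(\mu,\nu)$ for any Lipschitz $g$ on $\bB$ (after rescaling $g$ to be $1$-Lipschitz, picking up $\|g\|_{lip}$). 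Thus I would express both $f = \psi(\cdot/\theta)\varphi(\cdot/\theta)$ (for the $|a-c|$ term, so $g = \psi(\cdot/\theta)\cdot(\varphi(\cdot/\theta)/\varphi)$, but this is awkward because of dividing by $\varphi$).

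Cleaner alternative which I'd actually follow: note $\varphi(\cdot/\theta)$ is supported in $B(0,\theta)\subset B(0,1/2)$ (since $\theta\le 1/2$), hence on its support $\varphi\equiv 1$; therefore $\varphi(x/\theta) = \varphi(x/\theta)\varphi(x)$ and $\psi(x/\theta)\varphi(x/\theta) = \psi(x/\theta)\varphi(x/\theta)\varphi(x)$, so both integrands are of the form $(\text{Lipschitz on }\bB)\cdot\varphi$. The Lipschitz constants are controlled: $\varphi(\cdot/\theta)$ is $\theta^{-1}\|\varphi\|_{lip}$-Lipschitz, $\psi(\cdot/\theta)$ is $\theta^{-1}$-Lipschitz and bounded by $1$ on $B(0,\theta)$, so their product $\psi(\cdot/\theta)\varphi(\cdot/\theta)$ is at most $\theta^{-1}(1+\|\varphi\|_{lip})\theta^{-1}$... — here I must be a little careful and instead estimate $\|\psi(\cdot/\theta)\varphi(\cdot/\theta)\|_{lip}\le \theta^{-1}(1 + \|\varphi\|_{lip})$ using $\|\psi\|_\infty\le 1$, $\|\varphi\|_\infty\le1$ on the relevant ball. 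Then applying the rescaled form of \eqref{5.3} to each of $g_1 = \psi(\cdot/\theta)\varphi(\cdot/\theta)$ and $g_2 = \varphi(\cdot/\theta)$: $|a-c| \le \int\varphi\,d\mu\cdot\|g_1\|_{lip}\,\W_\varphi(\mu,\nu)$ and $|d-b|\le\int\varphi\,d\mu\cdot\|g_2\|_{lip}\,\W_\varphi(\mu,\nu)$. Since $\int\varphi\,d\mu\le\mu(B(0,1))$, combining everything gives $\Delta \le \frac{\mu(B(0,1))}{\mu(B(0,\theta/2))}\big(\|g_1\|_{lip}+\|g_2\|_{lip}\big)\W_\varphi(\mu,\nu) \le \theta^{-1}(1+4\|\varphi\|_{lip})\frac{\mu(B(0,1))}{\mu(B(0,\theta/2))}\W_\varphi(\mu,\nu)$ after collecting the Lipschitz bounds; taking $\sup$ over $\psi$ yields \eqref{5.16}. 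The main thing to get right — and the one place the argument could go wrong — is the bookkeeping of Lipschitz constants of the rescaled weight and test function and making sure the constant comes out as $1+4\|\varphi\|_{lip}$ and not something larger; everything else is the same $\frac{a}{b}-\frac{c}{d}$ trick already used in Lemma~\ref{t5.1}.
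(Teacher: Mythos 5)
Your setup and the key trick — changing variables, noting that $\varphi(\cdot/\theta)$ is supported in $B(0,1/2)$ where $\varphi\equiv 1$, so one can insert an extra factor of $\varphi$ and thereby turn the $\mu_1$-integrals into integrals of $(\text{Lipschitz test function})\cdot\varphi$ against $\mu$ — are exactly right and are what the paper does. But there is a genuine gap in the algebraic step, at the moment where you import the decomposition from Lemma~\ref{t5.1} and then claim
$$|a-c|\leq \int\varphi\,d\mu\cdot\|g_1\|_{lip}\,\W_\varphi(\mu,\nu),\qquad |d-b|\leq\int\varphi\,d\mu\cdot\|g_2\|_{lip}\,\W_\varphi(\mu,\nu).$$
This inequality is false in general. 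What \eqref{5.3} controls is the \emph{normalized} difference $\bigl|\tfrac{\int g\varphi\,d\mu}{\int\varphi\,d\mu}-\tfrac{\int g\varphi\,d\nu}{\int\varphi\,d\nu}\bigr|$, with a \emph{different} denominator under each integral; there is no way to recover the raw difference $\int g\varphi\,d\mu-\int g\varphi\,d\nu$ from it. Indeed, $\W_\varphi$ is invariant under multiplying $\mu$ and $\nu$ by independent positive constants (see \eqref{5.5}), so if $\mu=2\nu$ then $\W_\varphi(\mu,\nu)=0$ while $|a-c|=|\int g_1\varphi\,d\nu|$ is generically nonzero. The decomposition $\tfrac{a}{b}-\tfrac{c}{d}=\tfrac{d(a-c)+c(d-b)}{bd}$ works in Lemma~\ref{t5.1} only because there one has the normalization $\mu(\bB)=\nu(\bB)=1$ by hypothesis \eqref{5.7}; Lemma~\ref{t5.2} has no such normalization, and this is exactly where the borrowed algebra breaks.

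The fix — which is what the paper does — is to introduce $e=\int\varphi\,d\mu$ and $f=\int\varphi\,d\nu$, define the properly normalized differences $\delta_1=\tfrac{a}{e}-\tfrac{c}{f}$ and $\delta_2=\tfrac{b}{e}-\tfrac{d}{f}$ (these are exactly what \eqref{5.3} controls, with test functions $\Psi=\psi(\theta^{-1}\cdot)\varphi(\theta^{-1}\cdot)\varphi$ and $\varphi(\theta^{-1}\cdot)\varphi$ respectively, giving $|\delta_1|\leq\theta^{-1}(1+2\|\varphi\|_{lip})\W_\varphi$ and $|\delta_2|\leq 2\theta^{-1}\|\varphi\|_{lip}\W_\varphi$), and then use the different identity
$$\frac{a}{b}=\frac{c}{d}+\frac{ec}{bd}\delta_2+\frac{e}{b}\delta_1,$$
from which $\Delta\leq(|\delta_1|+|\delta_2|)\tfrac{e}{b}$ using $|c|\leq d$, and $\tfrac{e}{b}\leq\tfrac{\mu(B(0,1))}{\mu(B(0,\theta/2))}$. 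Your bookkeeping of Lipschitz constants is fine and yields the same final constant $\theta^{-1}(1+4\|\varphi\|_{lip})$; the missing piece is this algebraic rearrangement that keeps everything in the normalized form that $\W_\varphi$ actually sees.
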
 

\ms
As in \eqref{5.8} the estimate is not symmetric in $\mu$ and $\nu$, but is nonetheless valid.
We require that $\mu(B(0,\theta/2)) \neq 0$ and $\nu(B(0,\theta/2)) \neq 0$ to make sure that 
$\W_{\varphi}(\mu_1,\nu_1)$ is easily defined. Often $\mu$ is the restriction to $\bB$ of a doubling
measure and its support contains the origin; then ${\mu(B(0,1)) \over \mu(B(0,\theta/2))} \geq C^{-1}$,
for some $C$ that depends on $\theta$ and the doubling constant $C_\delta$.

\begin{proof}  
Let $\psi \in \Lip_1(\bB)$ be given; we want to control the quantity
\begin{equation} \label{5.17}
\Delta = \av{{\int\psi \varphi d\mu_1 \over \int \varphi d\mu_1}
-{\int\psi \varphi d\nu_1 \over \int \varphi d\nu_1}}
= : \av{ {a \over b} - {c \over d}} 
\end{equation}
(as above, but with integrals relative to $\mu_1$ and $\nu_1$).
Notice that by \eqref{5.15},
\begin{equation} \label{5.18}
a = \int\psi \varphi d\mu_1 = \int\psi(\theta^{-1} x) \varphi(\theta^{-1} x) d\mu(x)
= \int\psi(\theta^{-1} x) \varphi(\theta^{-1} x) \varphi(x)^2 d\mu(x),
\end{equation}
where we just use the fact that $\varphi(\theta^{-1} x)= 0$ when $x\in \R^n \sm B(0,1/2)$,
and the special shape of $\varphi$ in \eqref{5.1}, to add an extra $\varphi^2(x)$.
Similarly,
\begin{equation} \label{5.19}
c = \int\psi \varphi d\nu_1 = \int\psi(\theta^{-1} x) \varphi(\theta^{-1} x) \varphi^2(x) d\nu(x).
\end{equation}
The same computations without $\psi$ yield
\begin{equation} \label{5.20}
b = \int \varphi d\mu_1 = \int \varphi(\theta^{-1} x) \varphi(x)^2 d\mu(x),
\end{equation}
\begin{equation} \label{5.21}
d = \int \varphi d\nu_1 = \int \varphi(\theta^{-1}x) \varphi(x)^2 d\nu(x).
\end{equation}
It is also useful to introduce
\begin{equation} \label{5.22}
e = \int \varphi d\mu \ \text{ and } \ f = \int \varphi d\nu.
\end{equation}
Let us first estimate $\delta_1 = {a \over e} - {c \over f}$.
We want to apply the definition of $\W_{\varphi}(\mu,\nu)$ to the function $\Psi$
defined by $\Psi(x) = \psi(\theta^{-1}x) \varphi(\theta^{-1} x) \varphi(x)$. 
Notice that $\Psi$ is supported in $\bB$ (this is why we added $\varphi(x)$), and its Lipschitz norm 
is at most $\theta^{-1}(1+||\varphi||_{lip}) + ||\varphi||_{lip} \leq \theta^{-1}(1+2||\varphi||_{lip})$. 
Thus \eqref{5.3} yields
\begin{equation} \label{5.23}
|\delta_1|= \av{ {a \over e} - {c \over f}} \leq \theta^{-1}||\Psi ||_{lip} \W_{\varphi}(\mu,\nu)
\leq  \theta^{-1}(1+2||\varphi||_{lip}) \W_{\varphi}(\mu,\nu).
\end{equation}
We can also apply the definition of $\W_{\varphi}(\mu,\nu)$ to $\varphi(\theta^{-1} x) \varphi(x)$, 
whose Lipschitz norm is at most $2\theta^{-1} ||\varphi||_{lip}$,
and we get that
\begin{equation} \label{5.24}
|\delta_2| :=\av{ {b \over e} - {d \over f}} \leq 2\theta^{-1} ||\varphi||_{lip} \W_{\varphi}(\mu,\nu).
\end{equation}
Thus 
$$
{a \over b} = {e \over b} {a\over e}= {e \over b}\Big({c \over f} + \delta_1\Big)
= {e \over b}{c \over d} {d \over f} 
+ {e \delta_1\over b}
= {e \over b}{c \over d}\Big({b \over e}+\delta_2 \Big)+ {e \delta_1\over b}
= {c \over d} + {e c \delta_2 \over b d} + {e \delta_1\over b},
$$
where $\delta_1$ and $\delta_2$ are as in\eqref{5.23} and \eqref{5.24}.
Thus
\begin{equation} \label{5.25}
\Delta =  \av{ {a \over b} - {c \over d}} 
\leq {|e c \delta_2| \over b d} + {|e \delta_1|\over b}.
\end{equation}
Now $|c| \leq d$ because $\varphi \geq 0$ and $|\psi| \leq 1$,
$e = \int \varphi d\mu \leq \mu(B(0,1))$, and 
$b = \int \varphi(\theta^{-1} x) \varphi(x)^2 d\mu(x) \geq \mu(B(0,\theta/2))$
because of \eqref{5.1}. Thus we have
\begin{equation} \label{5.26}
\Delta \leq (\delta_1 + \delta_2) {e \over b}
\leq\theta^{-1} (1+4||\varphi||_{lip}) \W_{\varphi}(\mu,\nu)
\, {\mu(B(0,1)) \over \mu(B(0,\theta/2))}.
\end{equation}
Noting \eqref{5.17} and taking the supremum over $\psi \in \Lip_1(\bB)$,
 we obtain \eqref{5.16}.
\end{proof}

The next lemma is used in Section \ref{S8}. It shows that the distance
function $\W_{\varphi}$ also controls $\W_1$ in some averaged way.
Thus $\W_1$ and $\W_{\varphi}$ are basically interchangeable.

\begin{lemma} \label{t5.3}
Let $\mu$ and $\nu$ are Radon measures such that
$\mu(B(0,1/4)) > 0$ and $\nu(B(0,1/4))>0$.
Define $\mu_t$ and $\nu_t \,$, $1/4 \leq t \leq 1/2$, by
\begin{equation} \label{5.27}
\mu_t(A) = {\mu(tA) \over \mu(B(0,t))}
\ \text{ and } \ 
\nu_t(A) = {\mu(tA) \over \nu(B(0,t))}
\ \text{ for } A \subset \R^n.
\end{equation}
Then
\begin{equation} \label{5.28}
\fint_{1/4}^{1/2} \W_1(\mu_t,\nu_t) dt
\leq {(8+||\varphi||_{lip})\mu(\bB) \over \mu(B(0,1/4))} \, \W_{\varphi}(\mu,\nu). 
\end{equation}
\end{lemma}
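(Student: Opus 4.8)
The idea is to compare, for each fixed $t\in[1/4,1/2]$, the rescaled measures $\mu_t,\nu_t$ (which are probability measures on $\bB$) against a single reference pair, and then use the triangle inequality for $\W_1$ together with an averaging identity to reduce everything to a $\W_\varphi$-distance. Given $\psi\in\Lip_1(\bB)$, we must bound $|\int\psi\,d\mu_t-\int\psi\,d\nu_t|$. Unwinding \eqref{5.27},
\begin{equation}\label{e:5.28a}
\int\psi\,d\mu_t-\int\psi\,d\nu_t
= \frac{\int\psi(x/t)\,d\mu(x)}{\mu(B(0,t))}-\frac{\int\psi(x/t)\,d\nu(x)}{\nu(B(0,t))}.
\end{equation}
The function $x\mapsto\psi(x/t)$ is supported in $B(0,t)\subset B(0,1/2)$, so on its support $\varphi\equiv 1$ by \eqref{5.1}; hence $\psi(x/t)=\psi(x/t)\varphi(x)$, and similarly $\1_{B(0,t)}(x)=\1_{B(0,t)}(x)\varphi(x)$ can be inserted. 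This lets us rewrite \eqref{e:5.28a} entirely in terms of integrals against $\varphi\,d\mu$ and $\varphi\,d\nu$, which is exactly the shape of quantities controlled by $\W_\varphi$.

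The main step is then an algebraic manipulation of the type already done in the proof of Lemma \ref{t5.2}: write $\int\psi\,d\mu_t = a/b$ and $\int\psi\,d\nu_t = c/d$ with $b=\mu(B(0,t))=\int\1_{B(0,t)}\varphi\,d\mu$, $d=\nu(B(0,t))$, and introduce the normalizers $e=\int\varphi\,d\mu$, $f=\int\varphi\,d\nu$. Applying the definition \eqref{5.3} of $\W_\varphi$ to the $1$-Lipschitz (after rescaling by at most a bounded factor) test functions $x\mapsto\psi(x/t)\varphi(x)$ and $x\mapsto\1_{B(0,t)}(x)\varphi(x)$ — the latter being only measurable, so one first approximates $\1_{B(0,t)}$ by Lipschitz functions, or uses a dyadic decomposition of the indicator, which is where the $\fint$ in $t$ will actually help smooth things out — one gets $|a/e-c/f|$ and $|b/e-d/f|$ bounded by multiples of $\W_\varphi(\mu,\nu)$. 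Combining these as in the displayed computation preceding \eqref{5.25}, and using $|c|\le d$, $e\le\mu(\bB)$, and $b=\mu(B(0,t))\ge\mu(B(0,1/4))$ for $t\ge 1/4$, yields a pointwise-in-$t$ bound $\W_1(\mu_t,\nu_t)\le C\,\frac{\mu(\bB)}{\mu(B(0,1/4))}\W_\varphi(\mu,\nu)$ with an absolute constant; integrating (trivially) over $t$ then gives \eqref{5.28}.

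The one genuine obstacle is handling the indicator $\1_{B(0,t)}$, which is not Lipschitz, so $\W_\varphi$ cannot be applied to it directly. The cleanest fix exploits the average over $t\in[1/4,1/2]$: by Fubini, $\fint_{1/4}^{1/2}\big(\int\psi(x/t)\,d\mu(x)\big)\,dt$ and $\fint_{1/4}^{1/2}\mu(B(0,t))\,dt$ can each be written as $\int(\text{Lipschitz function of }x)\,d\mu(x)$, since averaging a family of indicators $\1_{B(0,t)}$ over $t$ produces a Lipschitz profile in $|x|$. One must then be a little careful, because the quantity to bound, $\fint\W_1(\mu_t,\nu_t)\,dt$, is an average of ratios rather than a ratio of averages; but since $b=\mu(B(0,t))$ varies only within a bounded factor over $t\in[1/4,1/2]$ (by nothing more than monotonicity, no doubling needed for this part), one can replace each $\mu(B(0,t))$ by $\mu(B(0,1/4))$ up to an absolute constant and pull the average inside, at which point the $t$-averaged numerators become Lipschitz and $\W_\varphi$ applies. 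Tracking the constants gives the stated $8+\|\varphi\|_{lip}$.
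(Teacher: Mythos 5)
Your overall strategy matches the paper's: rewrite $\int\psi\,d\mu_t-\int\psi\,d\nu_t$ as $a/b-c/d$, insert factors of $\varphi$ so that $\W_\varphi$ becomes applicable, use the algebraic identity from Lemma~\ref{t5.2} together with $|c|\le d$ and $b\ge\mu(B(0,1/4))$, and exploit the $t$-average to tame the indicator $\1_{B(0,t)}$. You have correctly identified both the mechanism and the genuine obstacle. However, there are two places where the sketch, as written, does not go through.

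First, the claim that ``$b=\mu(B(0,t))$ varies only within a bounded factor over $t\in[1/4,1/2]$, by nothing more than monotonicity'' is false: monotonicity gives $\mu(B(0,1/4))\le\mu(B(0,t))\le\mu(B(0,1/2))$, but without a doubling hypothesis on $\mu$ the ratio $\mu(B(0,1/2))/\mu(B(0,1/4))$ can be arbitrarily large. What one actually uses (and what the paper uses) is only the one-sided bound $1/b\le 1/\mu(B(0,1/4))$, with the resulting large factor absorbed into the constant $\mu(\bB)/\mu(B(0,1/4))$ that appears in \eqref{5.28}. So ``replacing $\mu(B(0,t))$ by $\mu(B(0,1/4))$ up to an absolute constant'' is not available; you must keep the one-sided inequality.

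Second, and more substantively, the plan to ``pull the average inside'' so that the $t$-averaged numerators become Lipschitz runs into the fact that in $\fint\W_1(\mu_t,\nu_t)\,dt = \fint\sup_{\psi\in\Lip_1(\bB)}|\cdots|\,dt$, the supremum over $\psi$ sits \emph{inside} the $t$-integral, and the optimal $\psi$ depends on $t$. One cannot choose a single $\psi$ and average $\int\psi(x/t)\,d\mu(x)$ over $t$. The paper avoids this entirely by splitting $\Delta\le(|a-c|+|b-d|)/b$ and treating the two pieces differently: $|a(t)-c(t)|$ is bounded \emph{pointwise} in $t$ by applying $\W_\varphi$ to the $(t^{-1}+\|\varphi\|_{lip})$-Lipschitz test function $x\mapsto\psi(t^{-1}x)\varphi(x)$ (no averaging needed here at all), while only the $\psi$-free term $|b(t)-d(t)|=|\mu(B(0,t))-\nu(B(0,t))|$ gets the $t$-averaging treatment. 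For that term the paper uses a clean duality argument: for an arbitrary bounded $h$ on $[1/4,1/2]$, Fubini turns $\int h(t)[\mu(B(0,t))-\nu(B(0,t))]\,dt$ into $\int\psi_h\,d\mu-\int\psi_h\,d\nu$ with $\psi_h(x)=\int\1_{[1/4,1/2]}(t)\1_{t>|x|}h(t)\,dt$, which is $\|h\|_\infty$-Lipschitz, supported in $B(0,1/2)$, and hence testable against $\W_\varphi$ after normalizing $\int\varphi\,d\mu=\int\varphi\,d\nu=1$. Taking a supremum over $h$ then yields $\int_{1/4}^{1/2}|\mu(B(0,t))-\nu(B(0,t))|\,dt\le\W_\varphi(\mu,\nu)$. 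That separation between the $\psi$-dependent piece (pointwise bound) and the $\psi$-free piece (averaged bound via duality) is the ingredient your sketch is missing.
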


\ms
\begin{proof} 
First notice that the statement does not change when we multiply
$\mu$ and $\nu$ by positive constants. So we may assume that
\begin{equation} \label{5.29}
\int \varphi d\mu = \int \varphi d\nu = 1.
\end{equation}
Next fix $t \in [1/4,1/2]$ and $\psi \in \Lip_1(\bB)$. We want to estimate
\begin{equation} \label{5.30}
\int \psi d\mu_t - \int \psi d\nu_t
= {\int \psi(t^{-1}x) d\mu(x) \over \mu(B(0,t))} - {\int \psi(t^{-1}x) d\nu(x) \over \nu(B(0,t))}
= : {a \over b} -{c \over d}.
\end{equation}
As before, 
\begin{equation} \label{5.31}
\Delta = {|ad-bc| \over bd} = {|d(a-c) - c(b-d)| \over bd}
\leq {|a-c| + |b-d| \over b}
\end{equation}
because $|c| = |\int \psi(t^{-1}x) d\nu(x)| \leq ||\psi||_\infty \nu(B(0,t)) \le d$.
Notice that $\varphi(x) = 1$ when $\psi(t^{-1}x) \neq 0$ since this implies that 
$|t^{-1}x|\le 1$ and hence $|x|\le t\le 1/2$. Thus
\begin{eqnarray} \label{5.32}
|a-c| &=& \av{\int \psi(t^{-1}x) d\mu(x)-\int \psi(t^{-1}x) d\nu(x)}
\\
&=& \av{\int \psi(t^{-1}x) \varphi^2(x) d\mu(x)-\int \psi(t^{-1}x) \varphi^2(x) d\nu(x)}.
\nonumber 
\end{eqnarray}
We apply the definition \eqref{5.3} of $\W_\varphi$ with the function $x \to \psi(t^{-1}x) \varphi(x)$,
which is supported in $\bB$ and $(t^{-1}+||\varphi||_{lip})$-Lipschitz.
We obtain that
\begin{equation} \label{5.33}
|a-c| \leq (t^{-1}+||\varphi||_{lip})\W_\varphi(\mu,\nu)
\end{equation}
Notice also that
\begin{equation} \label{5.34}
{1 \over b} = {\int \varphi d\mu \over \mu(B(0,t))}
\leq {\mu(\bB) \over \mu(B(0,t))} \leq {\mu(\bB) \over \mu(B(0,1/4))}.
\end{equation}
Thus
\begin{eqnarray}  \label{5.35}
\Delta &\leq&  {|a-c| + |b-d| \over b}
\leq {\mu(\bB) \over \mu(B(0,1/4))} \, [(t^{-1}+||\varphi||_{lip})\W_\varphi(\mu,\nu) + |b-d|]
\hskip0.8cm\,
\nonumber \\
&=& {\mu(\bB) \over \mu(B(0,1/4))} \, 
\big[(t^{-1}+||\varphi||_{lip})\W_\varphi(\mu,\nu) + |\mu(B(0,t)) - \nu(B(0,t))| \big].
\end{eqnarray}
We take the supremum over $\psi \in \Lip_1(\bB)$ and get that
\begin{equation} \label{5.36}
\W_1(\mu_t,\nu_t) \leq {\mu(\bB) \over \mu(B(0,1/4))} \, 
\big[(t^{-1}+||\varphi||_{lip})\W_\varphi(\mu,\nu) + |\mu(B(0,t)) - \nu(B(0,t))| \big].
\end{equation}
Since $t^{-1}\in[2,4]$, \eqref{5.28} will follow as soon as we prove that
\begin{equation} \label{5.37}
\int_{[1/4,1/2]} |\mu(B(0,t))-\nu(B(0,t))| dt \leq \W_\varphi(\mu,\nu).
\end{equation}

Let $h$ be a bounded measurable function, defined on $[1/4,1/2]$;
we want to evaluate
\begin{equation} \label{5.38}
I_h = \int_{[1/4,1/2]} h(t) [\mu(B(0,t))-\nu(B(0,t))] dt.
\end{equation}
Observe that by Fubini
$$
\int_{[1/4,1/2]} h(t) \mu(B(0,t)) dt
= \int_{x\in B(0,1/2)} \Big\{\int \1_{t\in [1/4,1/2]} \1_{t > |x|} h(t) dt \Big\} d\mu(x),
$$
and similarly for $\nu$.
Set $\psi_h(x) = \int \1_{t\in [1/4,1/2]} \1_{t > |x|} h(t) dt$.
This is a $||h||_\infty$-Lipschitz function of $|x|$, which vanishes when 
$|x| \geq 1/2$, so by \eqref{5.1}, \eqref{5.3} and the normalization in \eqref{5.29}
we have
\begin{eqnarray}  \label{5.39}
| I_h | &=& \av{\int_{B(0,1/2)} \psi_h d\mu - \int_{B(0,1/2)} \psi_h d\nu }
\\
&=& \av{ \int \psi_h \varphi d\mu - \int \psi_h \varphi d\nu }
\leq ||h||_\infty \W_\varphi(\mu,\nu).
\nonumber 
\end{eqnarray}
Thus
\eqref{5.39} holds for all bounded measurable functions $h$ defined on $[1/4,1/2]$,
and \eqref{5.37} follows by duality. We saw earlier that \eqref{5.37}
implies \eqref{5.28}. Lemma \ref{t5.3} follows.
\end{proof}

We conclude this section with an easy observation concerning the behavior of 
$\W_\varphi(\mu,\nu)$ when taking weak limits.

\ms
\begin{lemma} \label{t5.4}
Let $\mu$ and $\nu$ satisfy \eqref{5.2}, suppose that
$\mu$ is the weak limit of some sequence $\{\mu_k \}$, and that
$\nu$ is the weak limit of some sequence $\{\nu_k \}$. Then
\begin{equation} \label{5.40}
\W_\varphi(\mu,\nu) \leq \liminf_{k \to \infty} \W_\varphi(\mu_k,\nu_k).
\end{equation}
\end{lemma}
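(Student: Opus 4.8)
The plan is to unwind the definition \eqref{5.3} of $\W_\varphi$ and exploit that $\varphi$ is a fixed, compactly supported continuous function, so that weak convergence $\mu_k \to \mu$ and $\nu_k \to \nu$ gives convergence of all the relevant integrals. Fix any $\psi \in \Lip_1(\bB)$. Since $\varphi$ and $\psi\varphi$ are continuous with compact support (contained in $\bB$), weak convergence yields
\begin{equation} \label{e:5.4proof1}
\int \varphi\, d\mu_k \to \int \varphi\, d\mu, \qquad \int \psi\varphi\, d\mu_k \to \int \psi\varphi\, d\mu,
\end{equation}
and likewise for $\nu_k \to \nu$. Because $\mu$ and $\nu$ satisfy \eqref{5.2}, the limits $\int \varphi\, d\mu \geq \mu(B(0,1/2)) > 0$ and $\int \varphi\, d\nu \geq \nu(B(0,1/2)) > 0$ are strictly positive, so the denominators $\int \varphi\, d\mu_k$ and $\int \varphi\, d\nu_k$ are eventually bounded away from $0$; hence the quotients pass to the limit:
\begin{equation} \label{e:5.4proof2}
\frac{\int \psi\varphi\, d\mu_k}{\int \varphi\, d\mu_k} \to \frac{\int \psi\varphi\, d\mu}{\int \varphi\, d\mu}, \qquad \frac{\int \psi\varphi\, d\nu_k}{\int \varphi\, d\nu_k} \to \frac{\int \psi\varphi\, d\nu}{\int \varphi\, d\nu}.
\end{equation}

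Next I would estimate the single-$\psi$ term. For each $k$ large enough that the denominators above are positive, the definition \eqref{5.3} gives
\begin{equation} \label{e:5.4proof3}
\av{\frac{\int \psi\varphi\, d\mu_k}{\int \varphi\, d\mu_k} - \frac{\int \psi\varphi\, d\nu_k}{\int \varphi\, d\nu_k}} \leq \W_\varphi(\mu_k,\nu_k).
\end{equation}
Letting $k \to \infty$ along a subsequence realizing $\liminf_k \W_\varphi(\mu_k,\nu_k)$, and using \eqref{e:5.4proof2}, the left-hand side converges to $\av{\frac{\int \psi\varphi\, d\mu}{\int \varphi\, d\mu} - \frac{\int \psi\varphi\, d\nu}{\int \varphi\, d\nu}}$, so this quantity is at most $\liminf_k \W_\varphi(\mu_k,\nu_k)$. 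Since $\psi \in \Lip_1(\bB)$ was arbitrary, taking the supremum over $\psi$ yields \eqref{5.40}.

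There is essentially no hard part here; the only point requiring a little care is the handling of the denominators, i.e.\ making sure one may divide. This is exactly where the hypothesis \eqref{5.2} on the \emph{limit} measures $\mu,\nu$ is used: it guarantees $\int\varphi\,d\mu>0$ and $\int\varphi\,d\nu>0$, which together with \eqref{e:5.4proof1} forces $\int\varphi\,d\mu_k$ and $\int\varphi\,d\nu_k$ to be bounded below by a positive constant for $k$ large, so \eqref{5.3} is well-defined for those $k$ and the quotient map is continuous at the limit. (One does not even need $\mu_k,\nu_k$ to satisfy \eqref{5.2} for small $k$; one simply discards those indices, which does not affect the $\liminf$.) A harmless subtlety is that the supremum over $\psi\in\Lip_1(\bB)$ is taken after passing to the limit, so the argument produces, for each fixed $\psi$, the bound $\av{\cdots}\le \liminf_k \W_\varphi(\mu_k,\nu_k)$ with a right-hand side independent of $\psi$; supremizing in $\psi$ is then immediate.
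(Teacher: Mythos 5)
Your proof is correct and follows essentially the same route as the paper's: fix $\psi$, use weak convergence of $\int\varphi\,d\mu_k$ and $\int\psi\varphi\,d\mu_k$ (and for $\nu$), observe via \eqref{5.2} that the denominators stay bounded away from zero for $k$ large, pass to the limit along a subsequence realizing the $\liminf$, and then take the supremum over $\psi$. The only cosmetic difference is that the paper additionally checks $\mu_k(B(0,1/2))>0$ for $k$ large so that \eqref{5.2} holds for the approximating measures, whereas you verify only that the denominators $\int\varphi\,d\mu_k$ are eventually positive --- but as the remark after \eqref{5.3} makes clear, that is the only role of \eqref{5.2}, so the two arguments are equivalent.
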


\ms
\begin{proof}  
Set $L_k = \W_\varphi(\mu_k,\nu_k)$ and $L = \liminf_{k \to \infty} L_k$.
Notice that $\int\varphi d\mu = \lim_{k \to \infty} \int \varphi d\mu_k$
by weak convergence;
by \eqref{5.2}, this implies that $\int \varphi d\mu_k > 0$ for $k$ large.
The same argument applied to a continuous function $f \leq \1_{B(0,1/2)}$ 
such that $\int f d\mu > 0$
shows that $\mu_k(B(0,1/2)) > 0$ for $k$ large. Similar observations hold for $\nu$ and $\nu_k$.
For each $\psi \in \Lip_1(\bB)$, the weak convergence yields
$\int \psi\varphi d\mu = \lim_{k \to \infty} \int \psi\varphi d\mu_k$.
For $k$ large, 
$$
\av{{\int\psi \varphi d\mu_k \over \int \varphi d\mu_k}
-{\int\psi \varphi d\nu_k \over \int \varphi d\nu_k}} \leq L_k.
$$
Since each term has a limit and the denominators are bounded away from $0$, taking a $\liminf$ we have that
$$
\av{{\int\psi \varphi d\mu \over \int \varphi d\mu}
-{\int\psi \varphi d\nu \over \int \varphi d\nu}} \leq L.
$$
Taking the supremum over $\psi\in \Lip_1(\bB)$ we conclude that \eqref{5.40} holds.
\end{proof}

\section{ Uniqueness of the tangent measure at ``good points"}
\label{S6}
In this section show that for $x\in \Sigma_0 \cap \Sigma_1$,
$\Tan(\mu,x)$ is a one-dimensional set of flat measures.
Recall that $\Sigma_1$ and $\Sigma_0$ were defined in \eqref{1.16} and \eqref{1.24}
respectively.

\begin{lemma} \label{t6.1}
Let $\mu$ be a doubling measure, and let $x\in \Sigma_0 \cap \Sigma_1$.
Then there is a nonzero flat measure $\sigma$ such that 
$\Tan(\mu,x) = \big\{ c\sigma \, ; \, c > 0 \big\}$.
\end{lemma}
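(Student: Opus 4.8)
The plan is to combine Theorem~\ref{t1.3} (already proved: every tangent measure at $x\in\Sigma_0$ is flat) with a quantitative argument that uses the summability condition $\int_0^1 \alpha_{\cD}(x,r)\,\frac{dr}{r}<\infty$ defining $\Sigma_1$. By Theorem~\ref{t1.3} we already know $\Tan(\mu,x)\subset\cF$, and since the $\cF_d$ are mutually separated, there is a single integer $d$ with $\Tan(\mu,x)\subset\cF_d$. So what remains is \emph{uniqueness}: any two tangent measures at $x$ must be positive multiples of each other; equivalently, they must have the same underlying $d$-plane. We will show the stronger statement that the measures $\mu^{x,r}_0$ themselves converge (after the normalization built into $\W_\varphi$) as $r\to0$, which forces $\Tan(\mu,x)$ to be a single ray.

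Here are the steps I would carry out, in order. First, I would pass from $\W_1$ to the smoother distance $\W_\varphi$ using Lemma~\ref{t5.1} (valid here since $\mu$ is doubling, so $\mu(B(x,r/2))\sim\mu(B(x,r))$): smallness of $\alpha_{\cD}(x,r)$ gives a dilation $D\in\cD(x,r)$ with $\W_\varphi(\mu^{D}_0,\mu^{x,r}_0)$ small, with the loss only a doubling-constant factor. Second, the key point: because $D\in\cD(x,r)$ is a \emph{pure dilation} centered at $x$ with $\lambda(D)^{-1}\in[\lambda_1 r,\lambda_2 r]$, the rescaled measure $\mu^{D}_0$ is, up to the $\W_\varphi$-normalization, a rescaled copy of $\mu^{x,s}_0$ for some $s\in[\lambda_1 r,\lambda_2 r]$; so after applying $T_{x,r}$ and using Lemma~\ref{t5.2} to convert the rescaling into a controlled factor, one gets a Cauchy-type estimate comparing $\mu^{x,r}_0$ to $\mu^{x,s}_0$ at the nearby scale $s\approx r$. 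Third, iterate this across dyadic-type scales: a telescoping sum of the $\W_\varphi$-distances between $\mu^{x,r}_0$ at scales $r$ and scales in $[\lambda_1 r,\lambda_2 r]$ is controlled, via the triangle inequality \eqref{5.6}, by $\sum$ (doubling constant)$\,\cdot\,\alpha_{\cD}(x, r_j)$ along a suitable geometric sequence of scales $r_j\to 0$, and the integral condition $\int_0^1 \alpha_{\cD}(x,r)\frac{dr}{r}<\infty$ makes this sum converge (here one uses that $\int_0^1\alpha_{\cD}\frac{dr}{r}<\infty$ implies $\alpha_{\cD}(x,r_j)$ is summable along geometric sequences, modulo an averaging argument to pick good scales where $\alpha_{\cD}$ is not much larger than its average). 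Fourth, conclude that $\{\mu^{x,r}_0\}$ is $\W_\varphi$-Cauchy as $r\to0$, hence converges to a single measure $\sigma_0$ (the doubling bounds give the precompactness needed to extract limits, and Lemma~\ref{t5.4} gives lower semicontinuity so the limit is genuinely the $\W_\varphi$-limit); every tangent measure is then a positive multiple of $\sigma_0$. Finally, $\sigma_0$ is flat by Theorem~\ref{t1.3}, completing the proof.

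The main obstacle I anticipate is Step~3: the passage from the \emph{integral} summability $\int_0^1\alpha_{\cD}(x,r)\frac{dr}{r}<\infty$ to a genuinely summable \emph{sequence} of defect terms along scales tending to $0$, because the factor $\lambda_1<\lambda_2$ means the ``child'' scale $s$ produced by one application of $\alpha_{\cD}$ is only comparable to $r$, not a fixed fraction of it, so one does not get a clean dyadic recursion. The fix is to choose, for each dyadic block $[2^{-k-1},2^{-k}]$, a good radius $r_k$ in that block where $\alpha_{\cD}(x,r_k)$ is controlled by $2^{k}\int_{2^{-k-1}}^{2^{-k}}\alpha_{\cD}(x,r)\frac{dr}{r}=:\delta_k$ with $\sum_k\delta_k<\infty$, and then to chain the comparisons $\mu^{x,r_k}_0\to\mu^{x,s_k}_0\to\mu^{x,r_{k+1}}_0$ where the intermediate steps only move the scale by a bounded factor and hence cost only boundedly many $\W_\varphi$-triangle-inequality steps each carrying a defect $\lesssim\delta_j$ for nearby $j$. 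A secondary technical point is being careful that the normalizing factors in $\mu^{D}_0$ versus $\mu^{x,s}_0$ differ only by a doubling constant (this is exactly the stability that $\W_\varphi$ is designed to provide, cf.\ Lemmas~\ref{t5.1}, \ref{t5.2}), so that the constants in the chained estimate do not blow up with the number of scales.
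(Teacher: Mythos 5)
Your high-level plan — pass from $\W_1$ to $\W_\varphi$ via Lemma~\ref{t5.1}, rescale with Lemma~\ref{t5.2}, and telescope along a sequence of scales controlled by the summability of $\alpha_\cD$ — is the same route the paper takes in Section~\ref{S6}. But there is a genuine gap at the point where you ``conclude that $\{\mu^{x,r}_0\}$ is $\W_\varphi$-Cauchy as $r\to0$.'' The telescoping estimate you (and the paper) set up only controls the $\W_\varphi$-distance between terms of one specific recursive orbit $r_{j+1}=\rho(r_j)$, with $\rho(r)\in[\lambda_2^{-1}r,\lambda_1^{-1}r]$; it does \emph{not} by itself compare $\mu^{x,r}_0$ and $\mu^{x,r'}_0$ for two arbitrary small radii $r,r'$, since the only comparison tool you have ($\alpha_\cD$) links a scale $r$ to the one specific child scale $\rho(r)$ produced by the $D\in\cD(x,r_+)$ realizing the infimum. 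To conclude uniqueness of the tangent measure you must compare an arbitrary tangent measure $\sigma=\lim_k a_k\mu^{x,\rho_k}$ with the limit along the fixed orbit $\{r_j\}$. The paper does this with a step you omit: for each $k$ choose $j(k)$ with $r_{j(k)}\geq\rho_k>r_{j(k)+1}$, observe $\theta_k=\rho_k/r_{j(k)}\in[\lambda_2^{-1},1]$, pass to a subsequence with $\theta_k\to\theta$, and use that $\sigma$ is \emph{flat} — hence invariant (up to a positive constant) under the dilation $u\mapsto\theta u$ — to deduce that $a_k\mu^{x,r_{j(k)}}$ also converges weakly to (a multiple of) $\sigma$. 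Only then does Lemma~\ref{t5.4} combined with the telescoping bound give $\W_\varphi(\sigma,\sigma')=0$ for two tangent measures $\sigma,\sigma'$. This flatness/dilation-invariance step is the crux; without it, distinct orbits could a priori produce distinct flat limits and the Cauchy claim is unjustified.

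A secondary issue: your ``obstacle'' paragraph rests on a misreading of the geometry. After Lemma~\ref{t5.2} is applied with $\theta=r/r^\ast$, the new scale is $\rho(r)=r\,r_+/r^\ast\leq\lambda_1^{-1}r<r$, so the orbit $r_{j+1}=\rho(r_j)$ decays at a definite geometric rate (bounded between $\lambda_2^{-1}$ and $\lambda_1^{-1}$ per step), and the Chebyshev intervals $[2r_j,4r_j]$ automatically have bounded overlap. There is therefore no need for the dyadic-block chaining you propose, and that chaining in fact has its own problem: the link $\mu^{x,s_k}_0\to\mu^{x,r_{k+1}}_0$ is between two nearby radii for which no $\alpha_\cD$-type bound is available at either, so there is nothing that makes this step small. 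The paper avoids the issue entirely by always moving along the $\rho$-orbit, where each step carries an $\alpha_\cD$-bound obtained by Chebyshev on $[2r_j,4r_j]$, and by handling arbitrary tangent sequences only at the end via the dilation trick described above.
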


\ms
\begin{proof} 
Fix $\mu$ and $x\in \Sigma_0 \cap \Sigma_1$; without loss of generality,
we may assume that $x=0$. By Theorem \ref{t1.3}, we know that
\begin{equation} \label{6.1}
\Tan(\mu,0) \subset \cF
\end{equation}
where $\cF$ denotes the set of flat measures (see \eqref{1.21}).
By definition
of $\Sigma_1$,
\begin{equation} \label{6.2}
\int_0^1 \alpha_{\cD}(0,r) {dr \over r } < \infty.
\end{equation}

Thus it only remains to show that $\Tan(\mu,0)$ is one-dimensional. 
Our initial goal is to bound the $\W_{\varphi}$ distance for two different scaled dilations of $\mu$ 
by $ \alpha_{\cD}(0,\cdot)$ at the right scale.
For each $r \in (0,1/4)$ we use Chebyshev's inequality to find
$r_+ \in [2r, 4r]$ such that
\begin{equation} \label{6.3}
\alpha_{\cD}(0,r_+) 
\leq (2r)^{-1} \int_{2r}^{4r} \alpha_{\cD}(0,t) dt
\leq 2\int_{2r}^{4r} \alpha_{\cD}(0,t) {dt\over t}.
\end{equation}
By the definition of $\alpha_{\cD}(0,r_+)$ (see  \eqref{1.15}):
there is a transformation $G \in \cD(0,r_+)$, such that 
\begin{equation} \label{6.4}
\W_1(\mu_0^{G},\mu_0^{0,r_+}) \leq 2 \alpha_{\cD}(0,r_+).
\end{equation}
By the definition \eqref{1.14} of $\cD(0,r_+)$, $G$ is simply given by
$G(u) = \lambda u$, with $\lambda^{-1} = 
 \lambda(G)^{-1} \in [\lambda_1r_+,\lambda_2 r_+]$. Set
\begin{equation} \label{6.5}
r^\ast = \lambda^{-1} \in [\lambda_1r_+,\lambda_2 r_+];
\end{equation}
notice that $G$ is the homotety that sends $B(0,r^\ast)$ to
$\bB$, so $\mu_0^{G} = \mu_0^{0,r^\ast}$ 
(see near \eqref{1.11}) and now \eqref{6.4} says that
\begin{equation} \label{6.6}
\W_1(\mu_0^{0,r^\ast},\mu_0^{0,r_+}) \leq 2 \alpha_{\cD}(0,r_+).
\end{equation}

First apply Lemma \ref{t5.1} to the measures 
$\mu_0^{0,r^\ast}$ and $\mu_0^{0,r_+}$;\eqref{6.6} yields that
\begin{equation} \label{6.9}
\W_\varphi(\mu_0^{0,r^\ast},\mu_0^{0,r_+}) \leq C \alpha_{\cD}(0,r_+)
\end{equation}
(where we do not record any more the dependence on $\varphi$ or $C_\delta$).
Then we apply Lemma \ref{t5.2} to the same measures, with
$\theta = r/r^\ast$. Notice that $\theta \leq 1/2$ because $r^\ast \geq r_+ \geq 2r$.

Recall from \eqref{5.15} that $\mu_1$ is defined by
\begin{eqnarray}\label{6.10A}
\mu_1(A) &=&  \mu_0^{0,r^\ast}(\theta A) 
= \mu_0^{0,r^\ast}(r A/r^\ast) = \frac{\mu(r^\ast r A/r^\ast)}{\mu(B(0,r^\ast))}
\nonumber\\ 
&=& \frac{\mu(rA)}{\mu(B(0,r^\ast))}
= \frac{\mu(B(0,r))}{\mu(B(0,r^\ast))} \, \mu^{0,r}_0(A)
\end{eqnarray} 
by \eqref{1.11}. Similarly, $\nu_1$ is defined by
\begin{eqnarray}\label{6.10B}
\nu_1(A) &=&  \mu_0^{0,r_+}(\theta A) = \mu_0^{0,r_+}(r A/r^\ast)
= \frac{\mu(r_+ r A/r^\ast)}{\mu(B(0,r_+))}
\nonumber\\
&=& \frac{\mu(\rho(r)A)}{\mu(B(0,r_+))}
= \frac{\mu(B(0,\rho(r)))}{\mu(B(0,r_+))} \, \mu^{0,\rho(r)}_0(A)
\end{eqnarray} 
where
\begin{equation} \label{6.7}
\rho(r) =  { r  r_+  \over r^\ast}
\in [\lambda_2^{-1}r,\lambda_1^{-1} r].
\end{equation}

By \eqref{5.5}, \eqref{6.10A}, \eqref{6.10B}, Lemma \ref{t5.2}, 
the fact that $\mu_0^{0,r^\ast}$ is also doubling with the same constant as $\mu$ 
(which controls mass ratio in \eqref{5.16}), and \eqref{6.9},
\begin{equation} \label{6.11}
\W_\varphi(\mu_0^{0,r},\mu_0^{0,\rho(r)})
= \W_\varphi(\mu_1,\nu_1)
\leq C \W_\varphi(\mu_0^{0,r^\ast},\mu_0^{0,r_+})
\leq C  \alpha_{\cD}(0,r_+).
\end{equation}

\ms
In order to show that $\Tan(\mu,0)$ is one-dimensional,
we define a specific sequence of measures $\mu^{0,r_j}$, which will be used 
to approximate all the tangent measures of $\Tan(\mu,0)$ up to a multiplicative constant.
We start with $r_0 = 1/4$, and define $r_j$ by induction, taking
$r_{j+1} = \rho(r_j)$ for $j \geq 0$. 
Note that for all choice of integers $0 \leq k \leq l$,
by \eqref{5.6}, \eqref{6.11}, and \eqref{6.3} we have
\begin{eqnarray} \label{6.12}
\W_\varphi(\mu_0^{0,r_{k}},\mu_0^{0,r_{l+1}})
&\leq& \sum_{k \leq j \leq l} \W_\varphi(\mu_0^{0,r_j},\mu_0^{0,r_{j+1}})
\leq C \sum_{k \leq j < l} \alpha_{\cD}(0,(r_j)_+)
\nonumber\\
&\leq& 2C \sum_{k \leq j < l} \int_{2r_j}^{4r_j} \alpha_{\cD}(0,t) {dt\over t}.
\end{eqnarray}
Recall that $r_{j+1} = \rho(r_j) \leq \lambda_1^{-1}r_j$ (see \eqref{6.7}), 
thus the $r_j$'s decay at a definite rate. 
Therefore the intervals $[2r_j,4r_j]$ have bounded overlap, and since they are
all contained in $(0,4r_k]$, we obtain
\begin{equation} \label{6.13}
\W_\varphi(\mu_0^{0,r_{k}},\mu_0^{0,r_{l+1}})
\leq C \int_{0}^{4r_k} \alpha_{\cD}(0,t) {dt\over t}.
\end{equation}

\ms 
 Let $\sigma \in \Tan(\mu,0)$ be given. There exist sequences $\{\rho_k\}$ and $\{a_k\}$ 
 such that $\rho_k \in (0,1/4]$, $\lim _{k\to\infty}\rho_k =0$, $a_k>0$, and
\begin{equation} \label{6.14}
\sigma_k = a_k \mu^{0,\rho_k} \ \text{  converges weakly to } \sigma.
\end{equation}
Let $j = j(k)$ denote the largest integer such that 
$r_{j} \geq \rho_k$. Thus $j \geq 0$ (because $r_0 = 1/4$), and
$r_{j+1}<\rho_k$; since 
$r_{j+1} = \rho(r_j) \in [\lambda_2^{-1}r_j,\lambda_1^{-1} r_j]$
(by \eqref{6.7}), we get that
\begin{equation} \label{6.15}
\lambda_2^{-1} r_{j(k)}<\rho_k  \leq r_{j(k)}.
\end{equation}
Set $\theta_k = \rho_k /r_{j(k)}\in [\lambda_2^{-1},1]$;
we may replace $\{ \rho_k \}$ by a subsequence such that
the $\theta_k$ converge to a limit $\theta$. 
Consider the dilation $D_k$ defined by $D_k(u) = \theta_k u$,
and set $D(u) = \theta u$. Notice that by \eqref{6.14} and \eqref{1.11} 
\begin{equation} \label{6.16}
[D_k]_\sharp \sigma_k = a_k [D_k]_\sharp \mu^{0,\rho_k}
= a_k \mu^{0,\rho_k/\theta_k} = a_k \mu^{0,r_{j(k)}}.
\end{equation}
Also, the $[D_k]_\sharp \sigma_k$ converge weakly to $D_\sharp \sigma$. 
In fact for $f$ continuous and compactly supported,
\begin{eqnarray}\label{6.16A}
\lim_{k\to\infty}\int f \, d[D_k]_\sharp \sigma_k 
&= &\lim_{k\to\infty}\int f(\theta_k^{-1} x) d\sigma_k(x)
= \lim_{k\to\infty}\int f(\theta^{-1} x) d\sigma_k(x)
\nonumber\\
&= &\int f(\theta^{-1} x) d\sigma(x) = \int f \, d[D]_\sharp \sigma
\end{eqnarray}
(use the uniform continuity of $f$ and local uniform bounds on the $\sigma_k$).
By \eqref{6.1}, $\sigma$ is a flat measure; then $D_\sharp \sigma = \sigma$
and  \eqref{6.16A} shows that
\begin{equation} \label{6.17}
\{ a_k \mu^{r_{j(k)}} \} \text{ converges weakly to } \sigma.
\end{equation}
If $\sigma'$ is another nonzero element of $\Tan(\mu,0)$, we can find 
other sequences $\{ j'(k) \}$ and $\{ a'_k \}$, with
$\lim_{k \to \infty} j'(k) = \infty$ (by the analogue of \eqref{6.15}),
such that
\begin{equation} \label{6.18}
\{ a'_k \mu^{0,r_{j'(k)}} \} \text{ converges weakly to } \sigma'.
\end{equation}
By Lemma \ref{t5.4}, then \eqref{5.5}, and then \eqref{6.13} and \eqref{6.2},
\begin{eqnarray}  \label{6.19}
\W_{\varphi}(\sigma,\sigma') \hskip-0.2cm &\leq& 
\liminf_{k \to \infty} \W_{\varphi}(a_k \mu^{0,r_{j(k)}},a'_k \mu^{0,r_{j'(k)}})
\nonumber
= \liminf_{k \to \infty} \W_{\varphi}(\mu^{0,r_{j(k)}},\mu^{0,r_{j'(k)}}) 
\\
&\leq& C  \liminf_{k \to \infty} 
\int_{0}^{4 \max(r_{j(k)},r_{j'(k)})} \alpha_{\cD}(0,t) {dt\over t} = 0.
\end{eqnarray}
Then $\sigma = \sigma'$, and this completes our proof of Lemma \ref{t6.1}.
\end{proof}

\section{The decomposition of the ``good set" in $\Sigma_1$}
\label{S7}

In \eqref{1.28}-\eqref{1.29} we announced a decomposition of 
$\Sigma_0 \cap (\Sigma_1 \cup \Sigma_2)$ into pieces $\cS_d$ ($0 \leq d \leq n$),
which satisfy the property that for each $x\in\cS_d$, $\Tan(\mu,x)\subset \cF_d$. 
In this section we check that the pieces
$\Sigma_1 \cap \cS_d$ satisfy the requirements of Theorem \ref{t1.2}.
The remaining sets $\Sigma_2 \cap \cS_d$ will be treated in Section \ref{S8}.

We start with $d=0$. Set 
\begin{equation} \label{7.1}
\cS_0 = \big\{ x\in \Sigma \, ; \, \Tan(\mu,x) \subset \cF_0 \big\}.
\end{equation}
We claim (as in the statement of Theorem \ref{t1.2})
that $\cS_0$ is the set of points where $\mu$ has an atom,
and that every point of $\cS_0$ is an isolated point of $\Sigma$.

Suppose that $\mu$ has an atom at $x$. Then since $\mu$ is doubling, 
$x$ is an isolated point of $\Sigma$ (Lemma 2.3 in \cite{ADTprep}). 
We can check by hand that $\Tan(\mu,x)$ is the set $\cF_0$
of multiples of the Dirac measure at the origin, and that 
$x\in \Sigma_0 \cap \Sigma_1 \cap \Sigma_2$
(because $\alpha_\cD(x,r) = 0$ for $r$ small).

Conversely, suppose that $\Tan(\mu,x) \subset \cF_0$, and let us check
that $x$ is an isolated point of $\Sigma$. Suppose instead that we can find a
sequence $\{ x_k \}$ in $\Sigma \sm \{ x \}$ that converges to $x$.
Set $r_k = 2|x-x_k|$. Since $\mu$ is doubling, there is a subsequence of 
$\{ \mu_0^{x,r_k} \}$ which converges weakly to a measure $\sigma$.
Since $\sigma \in \Tan(\mu,x)$, $\sigma$ is a Dirac mass. Let $\zeta$
be smooth function such that $\1_ {\bB \sm B(0,1/4)}\le \zeta\le \1_{(B(0,2)\sm B(0,1/4)}$.
Then
$\int \zeta d\sigma = 0$, so 
$\lim_{k \to \infty} \int \zeta \mu_0^{x,r_k} = 0$.
On the other hand, by \eqref{1.11} and \eqref{1.2}
\begin{eqnarray}  \label{7.2}
\int \zeta d\mu_0^{x,r_k} &=& \mu(B(x,r_k))^{-1} \int \zeta d\mu^{x,r_k}
\\
&=& \mu(B(x,r_k))^{-1} \int \zeta(r_k^{-1} (y-x)) d\mu(y)
\nonumber \\
&\geq& \mu(B(x,r_k))^{-1} \mu(B(x_k,r_k/4)) \geq C_\delta^{-3}
\nonumber
\end{eqnarray}
 because $\zeta(r_k^{-1} (y-x))=1$ for 
$y\in B(x_k,r_k/4)$. This contradiction shows
that if $x\in \cS_0$, then $x$ is an isolated point in $\Sigma$, and 
then $\mu$ has a Dirac mass at $x$. 
This gives the desired description of $\cS_0$, the fact that $\cS_0$
is at most countable is easy to see. 

We may now concentrate on exponents $d \in[1,n]$. Set
\begin{equation} \label{7.3}
\cS'_d = \big\{ x\in \Sigma_0 \cap \Sigma_1 \, ; \, \Tan(\mu,x) \subset \cF_d \big\}
= \cS_d \cap \Sigma_1,
\end{equation}
where the last equality comes from \eqref{1.28}. Together with $\cS_0$,
these sets are disjoint and cover $\Sigma_1 \cap \Sigma_0$
(by \eqref{1.29}), hence also $\mu$-almost all of $\Sigma_1$
(by \eqref{1.22}). By Lemma~\ref{t6.1}, the only part of Theorem \ref{t1.2} 
concerning  $\Sigma_1$ that remains to be proven is the fact that
$\cS'_d$ is rectifiable for $1 \leq d \leq n$, and more precisely
\begin{eqnarray} \label{7.4}
&&\text{$\cS'_d$ can be covered by a countable family}
\\
&&\hskip3cm
\text{of $d$-dimensional Lipschitz graphs.}
\nonumber
\end{eqnarray}
(This is slightly more precise because we don not need to add a $\H^d$-negligible set.)
This follows from the following lemma, which is essentially known, but which 
we prove for the reader's convenience.

\begin{lemma} \label{t7.1}
Suppose that $\mu$ is a doubling measure, $\Sigma$ is its support,
$d\in \{1,...,n\}$, and  $E \subset\Sigma$ is such that for all 
$x\in E$, there is a vector space $V_x$ of dimension $d$ such that
$\Tan(x,\mu)=\big\{c\cH^{d}|_{V_{x}} \, ; \, c>0\big\}$. 
Then $E$ can be covered by a countable family of $d$-dimensional
Lipschitz graphs. 
\end{lemma}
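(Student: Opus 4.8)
The plan is to convert the hypothesis on tangent measures into a quantitative, uniform-in-scale statement saying that near each $x\in E$ the support $\Sigma$ sits in a thin slab around the affine plane $x+V_x$, and then to run the classical ``cone condition implies Lipschitz graph'' covering argument. For the first step, fix a small $\varepsilon\in(0,1/10)$ and claim that for each $x\in E$ there is $r(x)>0$ such that for all $0<r\le r(x)$ one has $\Sigma\cap B(x,r)\subset\{y:\dist(y-x,V_x)\le \varepsilon r\}$. If this failed, there would be $x\in E$, radii $r_k\to 0$, and points $y_k\in\Sigma\cap B(x,r_k)$ with $\dist(y_k-x,V_x)>\varepsilon r_k$. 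Since $\mu$ is doubling, a subsequence of the normalized blow-ups $\mu^{x,r_k}_0$ converges weakly to some $\sigma\in\Tan(\mu,x)$, which by hypothesis equals $c\,\H^d\res V_x$ with $c>0$. Because these measures are doubling with a uniform constant, weak convergence upgrades to Hausdorff convergence of supports on compact sets (a standard property of tangent measures of doubling measures; see \cite{Mattila}), so $r_k^{-1}(\Sigma-x)\cap\bar B(0,1)$ converges in Hausdorff distance to $V_x\cap\bar B(0,1)$. But $r_k^{-1}(y_k-x)$ lies in this set and stays at distance $>\varepsilon$ from $V_x$, a contradiction. (The reverse inclusion, that $V_x\cap B(x,r)$ lies in the $\varepsilon r$-neighbourhood of $\Sigma$, holds by the same argument but is not needed.)

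Next I would decompose $E$ into countably many pieces on which the plane is almost constant at a fixed scale. For integers $m\ge 1$, for $W$ ranging over a fixed countable dense subset of $G(d,n)$, and for $i\ge 1$ indexing a countable cover of $\R^n$ by balls $B_{m,i}$ of radius $1/(10m)$, let $F_{m,W,i}$ be the set of $x\in E\cap B_{m,i}$ with $r(x)\ge 1/m$ and $\angle(V_x,W)\le\varepsilon$. These sets cover $E$. Fix one of them, $F=F_{m,W,i}$, and take $x,y\in F$; then $|x-y|\le 1/(5m)$, so $2|x-y|\le 1/m\le r(x)$, and since $y\in\Sigma\cap B(x,2|x-y|)$ the first step gives $\dist(y-x,V_x)\le 2\varepsilon|x-y|$, hence $\dist(y-x,W)\le 3\varepsilon|x-y|$. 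Thus $F-F$ lies in a narrow cone about $W$. In particular the orthogonal projection $\pi_W$ is injective on $F$ (if $\pi_W(x)=\pi_W(y)$ then $y-x\in W^\perp$, so $|y-x|=\dist(y-x,W)\le 3\varepsilon|y-x|$, forcing $x=y$), and $A:=\pi_{W^\perp}\circ(\pi_W|_F)^{-1}$ is Lipschitz on $\pi_W(F)\subset W$ with constant $\le 3\varepsilon/\sqrt{1-9\varepsilon^2}$. Extending $A$ to a Lipschitz map $W\to W^\perp$ (e.g.\ coordinatewise by McShane) with the same constant, $F$ is contained in the graph of the extension, which is a $d$-dimensional Lipschitz graph with arbitrarily small constant. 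Countably many such pieces cover $E$, which proves the lemma; this also gives the quantitative refinement used in Theorem \ref{t1.2} that the covering graphs may be taken with Lipschitz constant below any prescribed $\varepsilon>0$.

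The main obstacle is the first step: passing from the purely measure-theoretic statement $\Tan(\mu,x)\subset\{c\,\H^d\res V_x\}$ to the \emph{uniform} geometric fact that $\Sigma\cap B(x,r)$ lies in a slab of width $\varepsilon r$ about $x+V_x$ for all small $r$. This rests on the compactness of blow-ups together with the fact that, for measures with a common doubling constant, weak convergence implies local Hausdorff convergence of supports; once this is available, Steps two and three are routine bookkeeping. One should only note that no measurability of $x\mapsto r(x)$ is needed, since the decomposition uses set inclusions rather than measure estimates, and the constraint $r(x)\ge 1/m$ is simply built into the definition of the pieces $F_{m,W,i}$.
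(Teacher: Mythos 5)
Your proof is correct and follows essentially the same route as the paper: first upgrade the tangent-measure hypothesis to a uniform cone/slab condition near each $x\in E$ via a blow-up argument (where you invoke the standard fact that weak convergence of uniformly doubling measures forces local Hausdorff convergence of supports, whereas the paper re-derives that point directly with a bump test function and the doubling lower bound on balls), and then decompose $E$ into countably many pieces on which the radius and the plane are essentially fixed and run the usual cone-to-Lipschitz-graph argument. The only differences (countable dense net in $G(d,n)$ instead of a finite $\varepsilon$-net, slab rather than cone phrasing, spelling out the McShane extension) are cosmetic.
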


\ms
The fact that $E = \cS'_d$ satisfies the assumption of the lemma comes from
Lemma~\ref{t6.1}.
\begin{proof}
If $d=n$, $\R^n$ is a
$d$-dimensional Lipschitz graph that covers $E$, thus
we assume that $d < n$.  We claim that 
for $x\in E$
\begin{equation} \label{7.5}
x+V_x \text{ is a tangent plane to $\Sigma$ at $x$.} 
\end{equation}
If not there is a sequence $\{ y_k \}$ in $\Sigma \sm \{ x\}$,
that tends to $x$, and such that
\begin{equation} \label{7.6}
\dist(y_k, x+V_x) \geq c |y_k -x|
\end{equation}
for some $c > 0$. Set
$r_k = 2 |y_k -x|$, and replace $\{ y_k \}$ with a subsequence for which
the $\{ \mu_0^{x,r_k} \}$ converges weakly to a measure $\sigma\in \Tan(\mu,x)$.
Let $\zeta$ be a smooth compactly supported non-negative function such that 
$\zeta(0) = 0$ on $V_x$, but 
\begin{equation} \label{7.7}
\zeta(u) = 1 \ \text{ for $u\in \bB$ such that }
\dist(u,V_x) \geq c/2.
\end{equation}
 By assumption, 
$\sigma$ is supported on $V_x$ and so $\int \zeta d\sigma = 0$.
Thus $\lim_{k \to \infty} \int \zeta \mu_0^{x,r_k} = 0$.
On the other hand, \eqref{7.6} says that for $y\in B(y_k, cr_k/4)$,
$$ 
\dist(r_k^{-1} (y-x), V_x) = r_k^{-1} \dist(y, x+V_x)
\geq r_k^{-1} [\dist(y_k, x+V_x) - {cr_k \over 4}] \geq c/4,
$$ 
hence $\zeta(r_k^{-1} (y-x))=1$ by \eqref{7.7}, and \eqref{1.11} and \eqref{1.2} imply that
\begin{eqnarray}  \label{7.8}
\int \zeta d\mu_0^{x,r_k} &=& 
\mu(B(x,r_k))^{-1} \int \zeta d\mu^{x,r_k}
\\
&=& \mu(B(x,r_k))^{-1} \int \zeta(r_k^{-1} (y-x)) d\mu(y)
\nonumber \\
&\geq& \mu(B(x,r_k))^{-1} \mu(B(x_k, c r_k/4)) \geq C^{-1}.
\nonumber 
\end{eqnarray}
This contradiction proves \eqref{7.5}.

\ms
For $\varepsilon > 0$ small and $x\in E$, choose an integer $j = j(x) \geq 0$ such that
\begin{equation} \label{7.9}
\dist(y,x+V_x) \leq \varepsilon |y-x|
\ \text{ for } y\in \Sigma \cap B(x,2^{-j(x)}).
\end{equation}
On the Grassmann manifold $G(d,n)$ of vector spaces of dimension $d$ in $\R^n$,
let us for instance use the distance defined by $\dist(V,W) = ||\pi_V - \pi_W||$, 
where $\pi_V$ and $\pi_W$ denote the orthogonal 
projections on $V$ and $W$. With this distance, $G(d,n)$ is compact.
Choose a finite family $\cV$ in $G(d,n)$ such that 
$\dist(V, \cV) \leq \varepsilon$ for $V \in G(d,n)$. Set
\begin{equation} \label{7.10}
E(V,j) = \big\{x\in E \, ; \, j(x) = j \text{ and }  \dist(V_x, V)\leq \varepsilon \big\}
\end{equation}
for $V\in \cV$ and $j \geq 0$. We now cover each $E(V,j)$
with a countable collection of $d$-dimensional Lipschitz graphs.
We claim that for each ball $B$ of radius $2^{-j-1}$,
\begin{equation} \label{7.11}
E(V,j) \cap B \ \text{ is contained in a Lipschitz graph over $V$} .
\end{equation}
Lemma \ref{t7.1} follows from this claim because the $E(V,j)$ cover $E$.
To prove the claim, let $x, y \in E(V,j)\cap B$ be given.
Observe that $|x-y| < 2^{-j}$ and $y \in E \subset \Sigma$, so
\eqref{7.9} guarantees that $\dist(y,x+V_x) \leq \varepsilon |y-x|$.
Then
$$
|\pi_V(y) - \pi_V(x)|
\leq |\pi_{V_x}(y) - \pi_{V_x}(x)| + ||\pi_V-\pi_{V_x}|| |x-y|
\leq 2\varepsilon |x-y|,
$$
which yields \eqref{7.11}. This completes our proof of Lemma \ref{t7.1}.

\end{proof}

\section{The decomposition of the ``good set" in $\Sigma_2$}
\label{S8}

Our goal in the section is to apply Theorem 1.5 in \cite{ADTprep} to the set 
$\Sigma_0 \cap \Sigma_2$, to obtain the desired decomposition. For the reader's convenience 
we include the necessary background below.

\begin{theorem} [Theorem 1.5, \cite{ADTprep}] \label{t8.1}
Let $\mu$ be a doubling measure in $\R^n$, denote by  $\Sigma$ its support,
and set
\begin{equation*}
\Sigma^0 = \big\{ x\in \Sigma \, ; \, 
\int_{0}^{1}\alpha(x,r) \frac{dr}{r} < \infty \big\},
\end{equation*} 
where
\begin{equation*}
\alpha(x,r)= \min_{d=0,1,...,n} \alpha_{d}(x,r), 
\end{equation*}
and 
\begin{equation*}
\alpha_d(x,r) = \inf\big\{ \W_1(\mu_0^{x,r},\nu_V) \, ; \, V \in A'(d,n) \big\}.
\end{equation*}
Here $A'(d,n)$ denotes the set of $n$ dimensional affine spaces which intersect $B(0,1/2)$ and 
$\nu_V = c_V \H^d\res{V} = c_V \1_{V} \H^d$, with
$  c_V = \H^d(V\cap \bB)^{-1}$.
Then there are disjoint Borel sets
$\Sigma^0(d) \subset \Sigma$, $0 \leq d \leq n$, such that 
\begin{equation*}  
\Sigma^0 = \bigcup_{d=0}^n  \Sigma^0(d),
\end{equation*}
with the following properties. 
\begin{enumerate}
\item
First, $\Sigma^0(0)$ is the set
of points of $\Sigma$ where $\mu$ has an atom; it is at most countable and
each of its point is an isolated point of $\Sigma$.
\item
For $1 \leq d \leq n$ and $x\in \Sigma^0(d)$, the limit
\begin{equation*} 
\theta_d(x) := \lim_{r \to 0} r^{-d} \mu(B(x,r))
\end{equation*}
exists, and $0 < \theta_d(x) < \infty$.
\item
For $1 \leq d \leq n$ and $x\in \Sigma^0(d)$, $\Sigma$ has a tangent $d$-plane
at $x$, $W$, and set $W^\ast = W-x$.
Then $\Tan(x,\mu)= \{c\H^{d}\res{W^\ast} \, ; \, c > 0\}$. In addition, the 
measures $\mu_0^{x,r}$ converge weakly to $\H^{d}\res{W^\ast}$.
\item Further decompose $\Sigma^0(d)$, $1 \leq d \leq n$,
into the sets
\begin{equation*}
\Sigma^0(d,k) = \big\{ x\in \Sigma^0(d) \, ; \, 
2^k \leq \theta_d(x) < 2^{k+1} \big\}, \ k\in \bZ;
\end{equation*}
then each $\Sigma^0(d,k)$ is a rectifiable set of dimension $d$, with
$\H^d(\Sigma^0(d,k) \cap B(0,R)) < \infty$ for every $R > 0$, 
$\mu$ and $\H^d$ are mutually absolutely continuous on $\Sigma^0(d,k)$, 
and $\mu = \theta_d \H^d$ there.
\end{enumerate}  
\end{theorem}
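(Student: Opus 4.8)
The plan is to run the same scheme as for Theorems~\ref{t1.2}--\ref{t1.3}, but with the model class being the (normalized) flat measures themselves, so that no passage through a self-similarity property is needed. Fix $x\in\Sigma^0$ and normalize $x=0$. The key first step is to prove that the blow-ups $\mu_0^{0,r}$ converge as $r\to0$. For this I would establish a quantitative ``slow variation'' estimate $\W_1(\mu_0^{0,r},\mu_0^{0,r/2})\lesssim \alpha(0,\rho)$ for $\rho$ comparable to $r$, modulo the change of normalization, in the spirit of Lemmas~\ref{t5.1}--\ref{t5.3}: if $\mu_0^{0,r}$ is $\varepsilon$-close to a flat measure $\nu_V$ and $\mu_0^{0,r/2}$ is $\varepsilon'$-close to $\nu_{V'}$, then $\nu_V$ and $\nu_{V'}$, seen inside $\bB$, are $(\varepsilon+\varepsilon'+(\text{doubling error}))$-close, and two normalized flat measures that are $\W_1$-close inside $\bB$ are globally comparable with the same dimension. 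Telescoping over dyadic scales and using $\int_0^1\alpha(0,r)\frac{dr}{r}<\infty$ produces a Cauchy sequence $\{\mu_0^{0,2^{-k}}\}$; comparing intermediate radii by the doubling property upgrades this to $\mu_0^{0,r}\to\sigma_0$ as $r\to0$. Since $r\mapsto\alpha(0,r)$ is $1$-Lipschitz with respect to $\W_1$ in its measure slot and $\liminf_{r\to0}\alpha(0,r)=0$ (finiteness of the logarithmic integral forces this), we get $\inf_V\W_1(\sigma_0,\nu_V)=0$. The set of normalized flat measures whose support meets $\overline{B(0,1/2)}$ is compact, and flat measures of different dimensions are uniformly $\W_1$-separated; since $0$ lies in the support of a tangent measure of a doubling measure, it follows that $\sigma_0=\nu_{W^\ast}$ for some linear $d$-plane $W^\ast\in G(d,n)$, where $d=d(x)$ is well defined. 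This yields the partition $\Sigma^0=\bigcup_{d}\Sigma^0(d)$ into Borel sets (Borel measurability of $x\mapsto d(x)$ being routine).

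Items (1)--(3) are then read off. For $d=0$, $\sigma_0$ a Dirac mass forces $0$ to be isolated in $\Sigma$ (otherwise a uniform chunk of mass survives near a nearby point, exactly as in the computation around \eqref{7.2}); conversely an atom of a doubling measure is isolated and has $\Tan(\mu,x)=\cF_0$, giving $\Sigma^0(0)$ and its countability. For $d\ge1$, weak convergence $\mu_0^{0,r}\to\nu_{W^\ast}$ gives $\mu(B(0,r/2))/\mu(B(0,r))\to 2^{-d}$, and feeding the quantitative rate above into $\phi(r)=\log\bigl(r^{-d}\mu(B(0,r))\bigr)$ shows $\phi$ has a finite limit; hence $\theta_d(x)=\lim_{r\to0} r^{-d}\mu(B(0,r))$ exists, and it is finite and positive since $\nu_{W^\ast}$ is neither zero nor infinite on $\bB$ and on $B(0,1/2)$. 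Any tangent measure $\sigma=\lim_k a_k\mu^{0,r_k}$ equals $c\,\H^d\res W^\ast$: by the doubling property the normalizers $a_k\mu(B(0,r_k))$ stay bounded above and below (no mass escapes to $\partial\bB$), so $\sigma$ is a positive multiple of $\nu_{W^\ast}$. This is item (3), with $W=x+W^\ast$ and $W^\ast=W-x$, and the asserted weak convergence of $\mu_0^{x,r}$ is the convergence of the first paragraph, read with the appropriate Hausdorff normalization.

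For item (4): since $\Tan(\mu,x)=\{c\,\H^d\res W_x^\ast\}$ for every $x\in\Sigma^0(d)$, this is precisely the hypothesis of Lemma~\ref{t7.1}, so $\Sigma^0(d)$ — hence each $\Sigma^0(d,k)$ — is covered by countably many $d$-dimensional Lipschitz graphs and is in particular $d$-rectifiable. Local finiteness $\H^d(\Sigma^0(d,k)\cap B(0,R))<\infty$ comes from the uniform lower density bound $\theta_d(x)\ge 2^k$ on $\Sigma^0(d,k)$: a Vitali/$5r$-covering argument turns $\liminf_{r\to0} r^{-d}\mu(B(x,r))\ge 2^k$ into $\H^d\res{\Sigma^0(d,k)}\le C\,2^{-k}\mu$, and $\mu$ is locally finite. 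Finally, mutual absolute continuity and $\mu=\theta_d\H^d$ on $R:=\Sigma^0(d,k)$ follow from differentiation of measures on the rectifiable set $R$ (see \cite{Mattila}): for $\H^d$-a.e.\ $x\in R$ one has $\lim_{r\to0} r^{-d}\H^d(R\cap B(x,r))=\omega_d$, where $\omega_d=\H^d$ of the unit $d$-ball, while $\lim_{r\to0} r^{-d}\mu(B(x,r))=\theta_d(x)$, and since $\Tan(\mu,x)$ is carried by $W_x^\ast$ the measure $\mu$ is, up to $o(r^d)$, concentrated on $R$; differentiating $\mu$ against $\H^d\res R$ gives $d\mu/d(\H^d\res R)=\theta_d/\omega_d$, which is bounded and bounded below on $\Sigma^0(d,k)$, so the two measures are mutually absolutely continuous there and $\mu=\theta_d\H^d$ after absorbing $\omega_d$ into the normalization of $\H^d$.

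The main obstacle is the first paragraph: converting the mere summability of $\alpha(0,r)$ into genuine convergence of $\mu_0^{0,r}$, together with a convergence rate for the density $r^{-d}\mu(B(0,r))$. This requires (i) the uniform $\W_1$-separation of flat measures of distinct dimensions, and (ii) the bookkeeping lemmas — analogues of Lemmas~\ref{t5.1}--\ref{t5.3} — that compare $\mu_0^{0,r}$ at nearby scales while controlling both the change of the normalizing constant $\mu(B(0,r))^{-1}$ and the mass newly entering $\bB$ as $r$ varies. Everything downstream (density existence, rectifiability via Lemma~\ref{t7.1}, the Radon--Nikodym identification) is standard geometric measure theory.
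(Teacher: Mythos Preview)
This paper does not prove Theorem~\ref{t8.1}: it is quoted as Theorem~1.5 of the companion paper \cite{ADTprep} (Part~I) and then \emph{applied} --- the remainder of Section~\ref{S8} is devoted to verifying its hypothesis \eqref{8.1} on $\Sigma_0\cap\Sigma_2$. So there is no in-paper proof to compare against; you have in effect sketched what Part~I does, and the overall strategy (a telescoping Cauchy estimate for $\mu_0^{0,r}$ driven by the summability of $\alpha$, identification of the limit as a flat measure through the origin, then Lemma~\ref{t7.1} and standard density theorems) is the right one.

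One step is more delicate than you indicate. The existence of $\theta_d(x)=\lim_{r\to0}r^{-d}\mu(B(x,r))$ does \emph{not} follow from a pointwise bound on $|\mu(B(0,r/2))/\mu(B(0,r))-2^{-d}|$ in terms of $\alpha(0,r)$: $\W_1$-closeness of $\mu_0^{0,r}$ to a flat $\nu_V$ does not control $|\mu_0^{0,r}(B(0,t))-\nu_V(B(0,t))|$ for a single $t$, since $\1_{B(0,t)}$ is not Lipschitz. What works is an average over $t$ --- precisely the mechanism of Lemma~\ref{t5.3} and its duality step \eqref{5.37}--\eqref{5.39} --- after which the telescoping series for $\log\!\big(r^{-d}\mu(B(0,r))\big)$ becomes summable. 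A second small gap is in item~(4): ``$\Tan(\mu,x)$ is carried by $W_x^\ast$, hence $\mu$ is, up to $o(r^d)$, concentrated on $R=\Sigma^0(d,k)$'' is not automatic, because concentration near the tangent plane is not concentration on the set $R$. The clean route is to bypass this and invoke directly the density and differentiation theorems for rectifiable sets in \cite{Mattila} once $0<\theta_d(x)<\infty$ is known for every $x\in\Sigma^0(d,k)$.
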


We want to apply Theorem \ref{t8.1}, so we need to show that 
for each $x\in \Sigma_0 \cap \Sigma_2$,
\begin{equation} \label{8.1}
\int_0^1 \alpha(x,t) {dt \over t} < \infty.
\end{equation}
Let $x \in \Sigma_0 \cap \Sigma_2$ be given. By
Theorem \ref{t1.3} every tangent measure $\sigma \in \Tan(\mu,x)$
is flat. To estimate to the distance from $\mu_0^{x,r}$ to 
 $\sigma$ we proceed as in Section \ref{S6}
except that we work with the whole group $\cG$ rather than $\cD$. 
We now follow that argument, without some of the details but we do emphasize the differences.

Without loss of generality, we assume that $x=0$. We use the definition of $\alpha_\cG$ and Chebyshev's
inequality to associate to each $r \in (0,1/4]$ a radius $r_+ \in [2r,4r]$ such that
\begin{equation} \label{8.2}
\alpha_{\cG}(0,r_+) \leq 2\int_{2r}^{4r} \alpha_{\cG}(0,t) {dt\over t}
\end{equation}
(see \eqref{6.3}). By the definition \eqref{1.13}, there exists $G \in \cG(0,r_+)$
 such that 
\begin{equation} \label{8.3}
\W_1(\mu_0^{G},\mu_0^{0,r_+}) \leq 2 \alpha_{\cG}(0,r_+)
\end{equation}
(see \eqref{6.4}). By \eqref{1.6} $G = \lambda R$ for
some isometry, and since $G \in \cG(x)$, \eqref{1.8} guarantee that
$G(0) = 0$ and hence $R(0) = 0$. That is, $R$ is a linear isometry.
We still have that $\lambda^{-1} = \lambda(G)^{-1} 
\in [\lambda_1r_+,\lambda_2 r_+]$, and if we set
\begin{equation} \label{8.4}
r^\ast = \lambda^{-1} \in [\lambda_1r_+,\lambda_2 r_+];
\end{equation}
as in \eqref{6.5}, we have that
\begin{equation} \label{8.5}
G(B(0,r^\ast)) = \bB
\end{equation}
and $\mu_0^{G}$ is the image of $\mu_0^{0,r^\ast}$
by a linear isometry. That is,
$\mu_0^{G} = R_\sharp \mu_0^{0,r^\ast}$
and \eqref{8.3} only yields
\begin{equation} \label{8.6}
\W_1(R_\sharp\mu_0^{0,r^\ast},\mu_0^{0,r_+}) \leq 2 \alpha_{\cG}(0,r_+)
\end{equation}
instead of \eqref{6.6}. We still multiply the radii by $r /r^\ast$, set
\begin{equation} \label{8.7}
\rho(r) =  { r  r_+  \over r^\ast}
\in [\lambda_2^{-1}r,\lambda_1^{-1} r]
\end{equation}
as in \eqref{6.7}, and deduce from \eqref{8.6} that
\begin{equation} \label{8.8}
\W_\varphi(R_\sharp\mu_0^{0,r},\mu_0^{0,\rho(r)}) \leq C \alpha_{\cG}(0,r_+),
\end{equation}
using the same proof which involves Lemma \ref{t5.1} and Lemma \ref{t5.2}
(the extra rotation does not affect the argument). Inequality \eqref{8.8} is the analogue of \eqref{6.11}.
Let us write this slightly differently. Set $R^r = R^{-1}$; then by \eqref{8.8}
\begin{equation} \label{8.9}
\W_\varphi(\mu_0^{0,r},R^r_\sharp\mu_0^{0,\rho(r)}) \leq C \alpha_{\cG}(0,r_+),
\end{equation}
since the $\W_{\varphi}$-distance 
is invariant under isometry.

\ms
Given $r_0 \leq 1/4$, we can construct a decreasing sequence
$\{ r_j \}$ as we did before, defined by $r_{j+1} = \rho(r_j)$.
Let us keep track of the rotations: set $S^0 = I$ and
$S^{j+1} =  S^j R^{r_j}$. For $ k\ge 0$ we want to estimate the numbers
\begin{equation} \label{8.10}
\delta_k = \W_\varphi(\mu_0^{0,r_0},S^{k+1}_\sharp\mu_0^{0,r_{k+1}}).
\end{equation}
Let us check by induction that
\begin{equation} \label{8.11}
\delta_k \leq C \sum_{0\leq j \leq k} \alpha_{\cG}(0,(r_j)_+).
\end{equation}
When $k=0$, this is \eqref{8.9} for $r_0$. If $k \geq 1$ and \eqref{8.11} holds for
$k-1$, the triangle inequality \eqref{5.6} yields
\begin{eqnarray}  \label{8.12}
\delta_k &\leq& \delta_{k-1} +
\W_\varphi(S^{k}_\sharp\mu_0^{0,r_{k}},S^{k+1}_\sharp\mu_0^{0,r_{k+1}})
\\
&=& \delta_{k-1} + 
\W_\varphi(S^{k}_\sharp\mu_0^{0,r_{k}},[S^{k}R^{r_k}]_\sharp\mu_0^{0,r_{k+1}})
\nonumber \\
&=& \delta_{k-1} + 
\W_\varphi(\mu_0^{0,r_{k}},R^{r_k}_\sharp\mu_0^{0,\rho(r_{k})})
\leq \delta_{k-1} + C \alpha_{\cG}(0,(r_k)_+)
\nonumber 
\end{eqnarray}
by definition of $S^{k+1}$, the invariance of $\W_\varphi$ under linear isometries, and \eqref{8.9}.
This proves \eqref{8.11}. Then \eqref{8.2} and the
same argument as in \eqref{6.12}-\eqref{6.13} yield
\begin{eqnarray} \label{8.13}
\delta_k
&\leq& C \sum_{0\leq j \leq k} \alpha_{\cG}(0,(r_j)_+)
\leq C \sum_{0\leq j \leq k} \int_{2r_j}^{4r_j} \alpha_{\cG}(0,t) {dt\over t}
\nonumber\\
&\leq&  C\int_{0}^{4r_0} \alpha_{\cG}(0,t) {dt\over t}.
\end{eqnarray}
The final integral is finite because $0\in \Sigma_2$ (see the definition \eqref{1.17}).

The measures $\mu_0^{0,r_k}$ are suitably normalized, so there is a subsequence
 which converges weakly to some measure $\sigma$
(again see Lemma 2.1 in \cite{ADTprep} for a little more detail).
There is also a further subsequence for which the $S^k$ converge
to an isometry $S$, and then the $S^k _\sharp\mu_0^{0,r_k}$ converge to 
$S_\sharp \sigma$ (proceed as for \eqref{6.16A}). 
By Lemma \ref{t5.4}, \eqref{8.10}, and \eqref{8.13},
\begin{eqnarray}  \label{8.14}
\W_\varphi(\mu_0^{0,r_0},S_\sharp \sigma) &\leq& 
\liminf_{k \to \infty} \W_\varphi(\mu_0^{0,r_0},S^{k}_\sharp\mu_0^{0,r_{k}})
=\liminf_{k \to \infty} \delta_k
\nonumber \\
&\leq&  C\int_{0}^{4r_0} \alpha_{\cG}(0,t) {dt\over t}.
\end{eqnarray}

We now use Lemma \ref{t5.3} to translate estimate \eqref{8.14} into an upper bound 
for the $\int_0^1 \alpha(x,r) \frac{dr}{r}$. 
For $t \in [1/4,1/2]$, the measure $\mu_t$ that is defined by \eqref{5.27} with $\mu$ 
replaced by $\mu_0^{0,r_0}$ is just $\mu_0^{0,tr_0}$. 
Since $\sigma$ is a flat measure, so is $S_\sharp \sigma$. 
Hence the measure $\nu_t$ built from $\nu = S_\sharp \sigma$ as in \eqref{5.27} 
is also a flat measure supported on a $d$-plane $V$ passing through the origin.
We use $\nu_t$ to estimate $\alpha(x,r)$. 
By \eqref{5.28}, the fact that $\mu$ is doubling, and \eqref{8.14}, we have
\begin{eqnarray}  \label{8.17}
\int_{1/4}^{1/2} \alpha(x,tr_0) dt
&\leq& \int_{1/4}^{1/2} \W_1(\mu_0^{0,tr_0},\nu_t) dt
=\int_{1/4}^{1/2} \W_1(\mu_t,\nu_t) dt
\\
&\leq& (8+||\varphi||_{lip}) C_\delta^2 \W_\varphi(\mu_0^{0,r_0},S_\sharp \sigma)
\leq C \int_{0}^{4r_0} \alpha_{\cG}(0,t) {dt\over t}.
\nonumber
\end{eqnarray}
Note that \eqref{8.17} holds for $r_0 \leq 1/4$.

We are now ready to prove that for $x\in \Sigma_0\cap \Sigma_2$, 
$\int_0^1 \alpha(x,r) \frac{dr}{r}<\infty$. 
Recall we are assuming $x=0$. By \eqref{8.17} and the definition \eqref{1.17} of $\Sigma_2$ we have
\begin{eqnarray}  \label{8.18}
\int_0^{1/8} \alpha(x,s) {ds \over s}
&=& \sum_{k \geq 2} \int_{2^{-k-2}}^{2^{-k-1}} \alpha(0,s) {ds \over s}
= \sum_{k \geq 2} \int_{1/4}^{1/2} \alpha(0,t2^{-k}) {dt \over t}
\nonumber \\
&\leq& 4 \sum_{k \geq 2} \int_{1/4}^{1/2} \alpha(0,t2^{-k}) dt
\nonumber\\
&\leq& 4C \sum_{k \geq 2} \int_{0}^{2^{-k+2}} \alpha_{\cG}(0,t) {dt\over t}
\\
&=& 4C \int_{0}^{1} \alpha_{\cG}(0,t) 
\Big\{ \sum_{k \geq 2} \1_{\{ k : 2^{-k+2} > t \}}(k) \Big\} {dt\over t}
\nonumber\\
&\leq& C \int_{0}^{1} \alpha_{\cG}(0,t) {\log(2/t) dt\over t} < \infty.
\nonumber
\end{eqnarray}
 
A brutal estimate shows that $\alpha(x,r) \leq 2$ for $r>0$, hence
$\int_{1/8}^1 \alpha(x,s) {ds \over s}< \infty$.
Hence the hypothesis of Theorem \ref{t8.1} hold. 
We obtain a decomposition of the set $\Sigma_0 \cap \Sigma_2$ into subsets
$\cS''_{d}$, $0 \leq d \leq n$, which satisfy all the requirements for 
Theorem \ref{t1.2}. In fact we get some additional information which
we record here. First, for every point $x\in \cS''_{d}$, $d \geq 1$,
$\Tan(\mu,x)$ is the vector space of dimension $1$ spanned by some
flat measure of dimension $d$ (see \eqref{1.27}) which implies 
\begin{equation} \label{8.19}
\cS''_{d} = \big\{ x\in \Sigma_0 \cap \Sigma_2 \, ; \,
\Tan(\mu,x) \subset \cF_d  \big\} = \Sigma_2 \cap \cS_d, 
\end{equation}
where $\cS_d$ is as in \eqref{1.28}.
Moreover once we know that $\Tan(\mu,x)$ is
the space of dimension $1$ spanned by some flat measure $\sigma \in \cF_d$,
we have that $\Sigma$ has a tangent $d$-plane at $x$ whose direction is given
by the support of $\sigma$; see \eqref{7.5}. Then $\cS''_{d}$ is rectifiable, and 
even satisfies \eqref{7.4}; see the proof above, and also\cite{ADTprep}.
This completes the proof of Theorem \ref{t1.2}.

As mentioned in Section \ref{S1} we have additional control on the size of $\mu$ on $\Sigma_2$ and the behavior of $\mu$ on $\cS''_{d}$. For $1 \leq d \leq n$ and every point 
$x\in \cS''_{d} = \Sigma_2 \cap \cS_d$, the density of $\mu$ exists, that is
\begin{equation} \label{8.20}
\theta_d(x) = \lim_{r \to 0}  r^{-d} \mu(B(x,r)) \in (0,\infty)
\end{equation}
(see \cite{ADTprep}). Morever, we have the further decomposition of $\cS''_{d}$
into sets $\cS''_{d} \cap \Sigma^0(d,k)$ where $\mu$ and $\H^d$
are mutually absolutely continuous, as in 4. in Theorem \ref{t8.1}.

\bibliographystyle{amsplain}

\providecommand{\bysame}{\leavevmode\hbox to3em{\hrulefill}\thinspace}
\providecommand{\MR}{\relax\ifhmode\unskip\space\fi MR }
\providecommand{\MRhref}[2]{
\href{http://www.ams.org/mathscinet-getitem?mr=#1}{#2}}
\providecommand{\href}[2]{#2}

\noindent {Jonas Azzam: Departament de Matem\`atiques,
Universitat Aut\`onoma de Barcelona, 08193 Bellaterra (Barcelona)
Email: jazzam@mat.uab.cat}\\

\noindent {Guy David: Universit\'e Paris-Sud, Laboratoire de Math\'{e}matiques, 
UMR 8658 Orsay, F-91405
CNRS, Orsay, F-91405. Email: guy.david@math.u-psud.fr}\\

\noindent {Tatiana Toro: University of Washington, 
Department of Mathematics,
Seattle, WA 98195-4350. Email: 
toro@math.washington.edu}

\end{document}